\documentclass[11pt]{amsart}
\usepackage{amsfonts, amsmath, amssymb, amscd, amsthm, graphicx, setspace, enumitem, color}
\usepackage{hyperref}
\usepackage{pinlabel}

\hoffset -1.45cm \voffset -1.8cm \textwidth=6.2in \textheight=8.3in
\tolerance=9000 \emergencystretch=5pt \vfuzz=2pt
\parskip=1.2mm

\newtheorem{theorem}{Theorem} [section]
\newtheorem*{RivC}{Rivin's Conjecture}
\newtheorem*{SRivC}{Strengthened Rivin's Conjecture}
\newtheorem{lemma}[theorem]{Lemma}
\newtheorem{proposition}[theorem]{Proposition}
\newtheorem{corollary}[theorem]{Corollary}

\newtheorem{remark}[theorem]{Remark}

\theoremstyle{definition}
\newtheorem{definition}[theorem]{Definition}
\newtheorem{notation}[theorem]{Notation}
\newcommand{\gen}[1]{\langle#1\rangle}
\newcommand{\ol}[1]{\overline{#1}}
\newcommand{\prs}[2]{\gen{#1\parallel #2}}

\newcommand{\wh}[1]{\widehat{#1}}

\newcommand\R{{\mathbb R}}
\newcommand\N{{\mathbb N}}
\newcommand\Z{{\mathbb Z}}

\newcommand\Q{{\mathbb Q}}

\newcommand\conj{\sim}
\newcommand\subconj{_{cj}}
\newcommand\subcomm{_{cm}}
\newcommand\subpconj{_{pc}}

\newcommand\geol{\mathsf{Geo}}   
\newcommand\cyc{\mathsf{Cyc}}    
\newcommand\geocl{\mathsf{ConjGeo}}   
\newcommand\geocpl{\mathsf{CycGeo}}   

\renewcommand{\kappa }{\varkappa}
\newcommand\e{\varepsilon}
\newcommand{\h}{\hookrightarrow_{h}}

\renewcommand{\d}{{\rm d}}

\newcommand{\ga}{\Gamma}
\newcommand{\Lab}{\mathsf{Lab}}

\newcommand{\cF }{\mathcal F}
\newcommand{\cG }{\mathcal G}
\newcommand{\cH }{\mathcal H}
\newcommand{\cL }{\mathcal L}
\newcommand{\cM }{\mathcal M}

\newcommand{\cR }{\mathcal R}
\newcommand{\cS }{\mathcal S}
\newcommand{\cT }{\mathcal T}

\newcommand{\grate}{\mathbf{h}}

\newcommand{\nei}[3]{\mathcal{N}_{#3}^{#2}(#1)} 
\newcommand{\unpad}{\rm unpad}

\newcommand{\ball}{\mathbb{B}}
\newcommand{\diam}{{\rm diam}}
\def\coloneq{\mathrel{\mathop\mathchar"303A}\mkern-1.2mu=}

\begin{document}

\author{Yago Antol\'{i}n and Laura Ciobanu}
\title[Formal conjugacy growth in acylindrically hyperbolic groups]{Formal conjugacy growth in acylindrically hyperbolic groups}

\begin{abstract}
Rivin conjectured that the conjugacy growth series of a hyperbolic group is rational if and only if the group is virtually cyclic. 
Ciobanu, Hermiller, Holt and Rees proved  that the conjugacy growth series of a virtually cyclic group is rational.
Here we present the proof confirming the other direction of the conjecture, by showing that the conjugacy growth series of a non-elementary hyperbolic group is transcendental.  We also present and prove some variations of Rivin's conjecture for commensurability classes and primitive conjugacy classes.

We then explore Rivin's conjecture for finitely generated acylindrically hyperbolic groups and prove a formal language version of it, namely that no set of minimal length conjugacy representatives can be unambiguous context-free.
\bigskip

\noindent 2010 Mathematics Subject Classification:  20F67, 68Q45.

\noindent Key words: Conjugacy growth, unambiguous context-free languages, regular languages, word hyperbolic groups, acylindrically hyperbolic groups.
\end{abstract}
\maketitle

\section{Introduction}\label{sec:intro}

For any $n\geq 0$, the \textit{conjugacy growth function} $\phi\subconj(n)$ of a 
finitely generated group counts the number of conjugacy classes in the ball of radius $n$. 
This function has recently been studied
by several authors for many important classes of groups (see the 
survey~\cite{gubasapir} and also ~\cite{bdc}, \cite{hullosin}) with the goal of 
determining whether it is polynomial or exponential, or to establish uniform 
conjugacy growth. 
Primitive conjugacy growth, the one that counts the number of conjugacy classes of primitive  elements 
(a primitive element is one which is not a proper power of another element) 
has  been studied for many decades, 
motivated by counting closed geodesics (up to free 
homotopy) on complete Riemannian manifolds; for example, Margulis \cite{margulis} 
proved that for a compact manifold $M$ of pinched negative curvature and 
exponential volume growth $e^{\grate t}$, where $\grate$ is the entropy of $M$, the number of 
primitive closed geodesics of period $\leq t$ is approximately $\frac{e^{\grate t}}
{ t}$. 
This formula gives, via quasi-isometry, good estimates for the number of 
primitive conjugacy classes  in the fundamental group of $M$.

The previously mentioned results study conjugacy growth from the asymptotic point of view. 
In this paper we 
study formal conjugacy growth, that is, the formal power series 
\begin{equation}\label{eq:growthseries}
\cG\subconj (z) = \sum_{n=0}^{\infty} \phi\subconj(n)z^n \in \Z[[z]].
\end{equation}
Notice that when studying formal growth, the algebraic complexity of function \eqref{eq:growthseries} (i.e. rational, algebraic or transcendental over $\Q(z)$) might depend on the choice of the generating set. Throughout the paper, we will assume that the generating sets generate the group as a monoid. In order to avoid working with non-symmetric metrics on Cayley graphs, we will also assume that the generating sets are inverse closed, although this last condition is not essential.

To our knowledge, Rivin (\cite{rivin04} and \cite{rivin}) was the first to study formal conjugacy growth for groups. He computed $\cG\subconj$ for non-abelian free groups with respect to the standard basis and showed that it
is not rational (see Section \ref{sec:series} for a definition). Ciobanu and Hermiller~\cite{ciobanuhermiller} obtained similar results for free products of finite groups different from $\Z/2\Z*\Z/2\Z$. 
 Rivin, based on his computations, made the following conjecture. 

\begin{RivC}\cite[Conjecture 13.1]{rivin}\label{conj:rivin}
Let $G$ be a word hyperbolic. The conjugacy growth series
$\cG\subconj$ is rational if and only if $G$ is virtually cyclic.
\end{RivC}

In \cite{CHHR}, Ciobanu, Hermiller, Holt and Rees proved one direction of Rivin's conjecture, namely that virtually cyclic groups have rational conjugacy growth series for any generating set.

In this paper we complete the proof of Rivin's Conjecture and give natural generalizations to primitive conjugacy growth and commensurability growth. 
As mentioned before, from the geometric point of view it is natural to study the primitive conjugacy growth $\phi\subpconj(n)$, i.e. the number of conjugacy classes of primitive elements of length at most $n$. From the algebraic point of view, on the other hand,
when the group has torsion it is more suitable to study commensurability classes. Recall that two elements $g$ and $h$ of a group $G$ are {\it commensurated} if there are $m,n\in \Z-\{0\}$ and $k\in G$ such that $k^{-1}g^mk=h^n$. Commensurability classes in acylindrically hyperbolic groups were used in  \cite{AMS} to decide if certain endomorphisms were inner automorphisms. Thus the number of commensurability classes $\phi\subcomm (n)$ in the ball of radius $n$ can be relevant for algorithms.  

Let $\cG\subpconj$ and $\cG\subcomm$ denote the growth series associated to $\phi\subpconj$ and $\phi\subcomm$, respectively. 
The first main result is the following.

\begin{theorem}\label{thm:rivinconj}
Let $G$ be a hyperbolic group. Then the growth series $\cG\subconj$, $\cG\subpconj$ and $\cG\subcomm$  with respect to any finite symmetric generating set are:
\begin{enumerate}
\item[{\rm (1)}] rational over $\mathbb{Q}(z)$, if $G$ is virtually cyclic.
\item[{\rm (2)}] transcendental over $\mathbb{Q}(z)$, if $G$ is not virtually cyclic.
\end{enumerate}
\end{theorem}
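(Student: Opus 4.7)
The plan is to handle the two cases separately: the virtually cyclic case follows from \cite{CHHR} together with simple structural observations, while the non-elementary case combines an asymptotic estimate with a classical analytic transcendence criterion.

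When $G$ is virtually cyclic, the rationality of $\cG\subconj$ with respect to any finite symmetric generating set is the content of \cite{CHHR}. The extensions to $\cG\subcomm$ and $\cG\subpconj$ require only structural inputs about virtually cyclic groups. Such a $G$ contains a characteristic infinite cyclic normal subgroup $C$ of finite index, every pair of infinite-order elements has a common nonzero power in $C$, and consequently $G$ has only finitely many commensurability classes; this makes $\phi\subcomm(n)$ eventually constant and $\cG\subcomm$ rational. The set of primitive elements of a virtually cyclic group is a regular subset in any automatic structure, so combining this with the argument of \cite{CHHR} yields a regular language of minimal-length primitive conjugacy representatives, and hence rationality of $\cG\subpconj$.

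For part (2), let $G$ be non-elementary hyperbolic with exponential growth rate $\lambda > 1$ relative to the chosen symmetric generating set. The strategy is to establish an asymptotic
$$
\phi\subconj(n)\ \sim\ C\,\lambda^n/n, \qquad C > 0,
$$
and to invoke a classical analytic transcendence criterion. That criterion, coming from Darboux's method together with the Puiseux structure of algebraic functions, states that a power series algebraic over $\Q(z)$ with a dominant singularity of radius $1/\lambda$ has coefficient asymptotics of the form $c\,n^{\alpha}\,\lambda^n$ with $\alpha \in \Q$, where $\alpha$ is restricted to the admissible spectrum consisting of non-negative integers (for rational functions) and non-integer rationals of the form $-1 - p/q$ with $\gcd(p,q) = 1$, $p, q \geq 1$ (for genuine algebraic branch singularities); crucially, $\alpha = -1$ is excluded. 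Hence any series with positive coefficients satisfying $a_n \sim c\lambda^n/n$ is transcendental over $\Q(z)$. The asymptotic itself will be extracted from the regular language of unique geodesic normal forms for $G$ (Cannon), which yields the spherical growth $\sigma_G(n) \sim c'\lambda^n$, together with a cyclic-shift correspondence: Bogopolski-type results on cyclic geodesics in hyperbolic groups ensure that each generic conjugacy class of minimal length $n$ produces $n$ cyclically distinct geodesic representatives, yielding $\phi\subconj(n) \sim \sigma_G(n)/n$. Torsion classes, non-primitive elements, and elements with large centralizers contribute only a subexponentially small $O(\lambda^{n/2})$ correction, so the same asymptotic $\phi\subpconj(n) \sim C'\lambda^n/n$ transfers via M\"obius inversion over proper powers, and a parallel argument for maximal virtually cyclic subgroups of $G$ gives $\phi\subcomm(n) \sim C''\lambda^n/n$.

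The main obstacle is sharpening the coarse Coornaert--Knieper estimate $\phi\subconj(n) \asymp \lambda^n/n$ to a genuine leading-order asymptotic, uniformly across arbitrary finite symmetric generating sets. This requires an unambiguous regular description of cyclically reduced geodesic representatives together with quantitative control on the cardinality of each cyclic orbit, so as to isolate the exact $1/n$ factor modulo a subexponentially small correction. Once the $\sim C\lambda^n/n$ asymptotic is in place for each of $\phi\subconj$, $\phi\subpconj$ and $\phi\subcomm$, the transcendence criterion simultaneously concludes all three cases.
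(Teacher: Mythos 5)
Your virtually cyclic case is essentially fine (though heavier than needed: with the paper's notion of primitivity a virtually cyclic group has only finitely many primitive conjugacy classes and finitely many commensurability classes, so $\cG\subpconj$ and $\cG\subcomm$ are handled directly, without invoking automatic structures or regularity of the set of primitive elements, a claim you do not justify). The genuine gap is in the non-elementary case. Your argument hinges on a true leading-order asymptotic $\phi\subconj(n)\sim C\,e^{n\grate}/n$ (and likewise for $\phi\subpconj$, $\phi\subcomm$), which you admit is ``the main obstacle'' and only sketch via an unambiguous description of cyclic geodesics together with the claim that a generic class of length $n$ has exactly $n$ cyclically distinct geodesic representatives. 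No such asymptotic is established by Coornaert--Knieper, and proving it uniformly over arbitrary finite symmetric generating sets, and in the presence of torsion, is not a routine refinement: even the coarse upper bound $\phi\subconj(n)\preceq e^{n\grate}/n$ was previously known only in the torsion-free case, and extending it requires controlling roots of non-primitive elements and centralizer contributions (in the paper this is the content of Lemmas 4.7, 4.9, 4.11, 4.13 and the ``at most $2M$ primitive classes per commensurability class'' lemma), which you dismiss as an $O(\lambda^{n/2})$ correction without argument. As written, the key analytic input of your proof is therefore unproved.

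Moreover, the sharp asymptotic is unnecessary: the two-sided bounds $A\,e^{n\grate}/n\leq b_n\leq B\,e^{n\grate}/n$ already suffice. If $\sum b_nz^n$ were algebraic, Flajolet's theorem on coefficients of algebraic functions gives $b_n=\sum_i c_i n^{p}\alpha_i^n\lambda^n+O(n^q\lambda^n)$ with $p\in\Q\setminus\{-1,-2,\dots\}$, and the bounds force $\lambda=e^{\grate}$ and $p=-1$, a contradiction; this is exactly how the paper concludes, so no $\sim$-asymptotic, M\"obius inversion, or unambiguity of geodesic languages is needed. Finally, your description of the admissible exponent spectrum is wrong in detail: exponents such as $-1/2$ do occur for algebraic singularities and are not of the form $-1-p/q$ with $p,q\geq 1$; the correct statement is simply that negative integer exponents (in particular $-1$) are excluded, which is the only fact the argument uses.
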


Our proof of Theorem \ref{thm:rivinconj} relies on 
understanding the asymptotics of $\phi\subconj$, $\phi\subpconj$ and $\phi\subcomm$ for non-elementary hyperbolic groups. A hyperbolic group is called {\it elementary} if it is virtually cyclic. 
Coornaert and Knieper (\cite{ckGAFA}, \cite{ckIJAC}) generalized the results of Margulis to the context of  hyperbolic groups, and provided bounds (the upper bound only in the torsion-free case) for the
growth function of conjugacy classes and primitive conjugacy classes in terms of the \emph{exponential growth rate} $\grate$ of $G$ with respect to the generating set $X$, where $\grate =\limsup_{n\rightarrow \infty} \log \sqrt[n]{|\ball_X(n)|}$ and $\ball_X(n)$ is the ball of radius $n$. In this paper we extend the results of Coornaert and Knieper to all conjugacy classes and all commensurability classes, and we prove 
\begin{theorem}\label{thm:cgrate}
Let $G$ be a non-elementary word hyperbolic group and $X$ any finite symmetric  generating set. There are positive constants $A, B$ and $n_0$ such that
$$A\frac{e^{n\grate }}{n} \leq  \phi\subcomm(n)\leq \phi\subpconj(n) \leq \phi\subconj(n) \leq B \frac{e^{n\grate}}{n}$$ for all $n \geq n_0$.

\end{theorem}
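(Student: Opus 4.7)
I would split the proof into three parts: (i) the chain of elementary inequalities relating the three growth functions; (ii) the upper bound $\phi\subconj(n) \leq B\,e^{n\grate}/n$; and (iii) the lower bound $\phi\subcomm(n) \geq A\,e^{n\grate}/n$. The inequality $\phi\subpconj(n) \leq \phi\subconj(n)$ is immediate. For $\phi\subcomm(n) \leq \phi\subpconj(n)$, I would associate to each infinite-order $g$ the generator $p$ of the unique maximal infinite cyclic subgroup of the elementary closure $E(g)$; this $p$ is primitive in $G$, lies in the commensurability class of $g$, and in a hyperbolic group satisfies $|p|_{\cyc} \leq |g|_{\cyc} + C_1$ via the standard comparison between cyclic length and translation length. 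Absorbing $C_1$ into $B$ and $n_0$, together with handling the single torsion commensurability class separately, gives the stated inequality.

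For the upper bound on $\phi\subconj$, I would assign to each class $[g]$ its cyclic length $|[g]|_{\cyc} := \min\{|h| : h \in [g]\}$, so that $[g]$ meets $\ball_X(n)$ iff $|[g]|_{\cyc} \leq n$. The key step is a counting lemma: every conjugacy class of cyclic length $k$ contains at least $c\,k$ distinct elements of length at most $k + C_0$, with $c, C_0 > 0$ depending only on $G$ and $X$. The proof passes to cyclic permutations of a geodesic representative of a shortest conjugate and uses the hyperbolic ``no long periodicity'' property of cyclically geodesic words to rule out too many coincidences among the shifts. Letting $\phi\subconj^{(m)}$ count classes of cyclic length exactly $m$, the inequality $\sum_{m \leq n} c\,m\,\phi\subconj^{(m)} \leq |\ball_X(n+C_0)| \leq D\,e^{n\grate}$ yields $\phi\subconj^{(m)} \leq D'\,e^{m\grate}/m$, and the partial sum $\sum_{m \leq n} \phi\subconj^{(m)}$ is dominated by its last term, giving the claimed bound.

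For the lower bound on $\phi\subcomm$, non-elementarity supplies a pair of loxodromic elements $a, b \in G$ with disjoint fixed-point pairs on $\partial G$. A ping-pong construction produces a family $F_n \subset \ball_X(n)$ of primitive loxodromic elements with $|F_n| \geq A'\,e^{n\grate}$; the full exponential rate $\grate$ is recovered by attaching short fixed $a, b$-decorations to arbitrary geodesic cores (or by using the biautomatic structure on $G$). Two elements of $F_n$ are commensurable only when they share the same pair of boundary fixed points, and the construction forces each commensurability class to contribute at most $K n$ elements to $F_n$ (through cyclic conjugation and inversion inside the virtually cyclic elementary closure). Hence $\phi\subcomm(n) \geq |F_n|/(K n) \geq A\,e^{n\grate}/n$.

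The hardest step is adapting Coornaert--Knieper's arguments, which treat torsion-free hyperbolic groups and the finer invariant $\phi\subpconj$, to the present generality. Torsion forces one to work with the virtually cyclic $E(g)$ in place of a maximal cyclic $\langle p \rangle$ and to separate the finitely many torsion conjugacy classes; and passing to the coarser commensurability relation requires the uniform bound that the ping-pong family $F_n$ is not collapsed by commensurability beyond a linear-in-$n$ factor. The counting lemma and this uniform non-commensurability bound form the two technical cores of the argument.
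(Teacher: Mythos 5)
The central gap is the counting lemma on which your upper bound rests. The claim that \emph{every} conjugacy class of cyclic length $k$ contains at least $c\,k$ distinct elements of length at most $k+C_0$ is false for proper powers: if $h$ is primitive with $|h|\subconj=m$ and $g=h^k$, then $|g|\subconj\geq \frac34 km$ (Lemma \ref{lem:ckIJAC}), but any conjugate $xh^kx^{-1}$ of length within $C_0$ of $|g|\subconj$ forces $x$ to lie a bounded distance from the quasi-axis of $h$, and such $x$ are counted modulo the centralizer of $h^k$, which translates the axis by $\approx m$; so the class contains only $O(m)=O(|g|\subconj/k)$ such elements, not $\gtrsim |g|\subconj$. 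Equivalently, a cyclic geodesic for $h^k$ \emph{is} coarsely periodic, so the ``no long periodicity'' phenomenon you invoke is available only for primitive classes --- and that is exactly how the paper uses it: the cyclic-permutation argument, with coincidences among shifts bounded by $M=\sup\{|F|: F\leq G \text{ finite}\}$, gives the sharp bound only for $\sigma\subpconj$, and the non-primitive classes are then shown to be negligible (at most $B_1n^2e^{3\grate n/4}$, Lemma \ref{lem:negli}) by a separate argument: every long non-primitive class has a primitive root of length $\leq\frac34 n+K_1$ reached via a boundedly short conjugator (Lemma \ref{lem:roots}, using the BCD property). Without this second ingredient your inequality $\sum_{m\leq n}c\,m\,\phi^{(m)}\leq|\ball_X(n+C_0)|$ is unjustified and the bound on $\phi\subconj$ does not follow.

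The lower bound sketch also asserts, rather than proves, its two hard points. First, getting the \emph{sharp} rate $A'e^{\grate n}$ for your ping-pong family $F_n$: attaching bounded decorations, or appealing to (bi)automaticity, does not obviously preserve the exact exponent $\grate$ (rationality of the growth series alone permits polynomial factors and oscillation; the two-sided bound $|\ball_X(n)|\asymp e^{\grate n}$ is Coornaert's Patterson--Sullivan theorem, and the application to transcendence needs the sharp rate). Second, the claim that each commensurability class meets $F_n$ in at most $Kn$ elements: commensurable elements need not share a fixed-point pair (the conjugator moves it), and in general a commensurability class meets $\ball_X(n)$ in far more than $Kn$ elements, so this is a property your construction must enforce and verify --- it is precisely the technical core you would be replacing. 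The paper sidesteps both issues by quoting Coornaert--Knieper's lower bound for $\phi\subpconj$ and then descending to $\phi\subcomm$ via two facts: minimal-length representatives of long commensurability classes are primitive (Corollary \ref{cor:commareprim}, again via Lemma \ref{lem:roots}), and each commensurability class of a loxodromic element contains at most $2M$ primitive conjugacy classes (Lemma \ref{lem:2Mprimitive}). Relatedly, in your part (i) the ``generator of the unique maximal infinite cyclic subgroup of $E(g)$'' does not exist in the presence of torsion: $E_G^+(g)$ is only finite-by-cyclic, there are up to $2M$ primitive classes per commensurability class, and handling this ambiguity is exactly what the $2M$ bound is for.
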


In general, one can define the \emph{exponential growth rate} of any positive function $f$ to be the quantity $\limsup_{n\rightarrow \infty} (\log \sqrt[n]{f(n)})$. A direct consequence of the previous theorem is
\begin{corollary} \label{growth_rates}
Let $G$ be hyperbolic and $X$ any finite generating set.
The exponential growth rate $\grate$ of $G$ and the exponential growth rates of $\phi\subconj$, $\phi\subcomm$ and $\phi\subpconj$ are equal.
\end{corollary}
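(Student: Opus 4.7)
The plan is to split into the elementary and non-elementary cases, and in each derive the equality directly from bounds already available.

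For $G$ non-elementary, I would apply Theorem \ref{thm:cgrate}, which gives, for each of $\phi\subconj$, $\phi\subpconj$, $\phi\subcomm$, sandwich bounds of the form $A \frac{e^{n\grate}}{n} \leq \phi_*(n) \leq B \frac{e^{n\grate}}{n}$ for all $n \geq n_0$. Taking $\log\sqrt[n]{\cdot}$ of both sides, the lower bound becomes $\grate + \frac{\log A}{n} - \frac{\log n}{n}$ and the upper bound becomes $\grate + \frac{\log B}{n} - \frac{\log n}{n}$. Both sequences tend to $\grate$, so $\log\sqrt[n]{\phi_*(n)}$ actually converges (not merely limsup-converges) to $\grate$ for each of the three functions.

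The only case not immediately covered by Theorem \ref{thm:cgrate} is $G$ elementary, i.e.\ virtually cyclic. There $|\ball_X(n)|$ grows at most linearly in $n$, so $\grate = \limsup_n \log\sqrt[n]{|\ball_X(n)|} = 0$. Since $1 \leq \phi\subcomm(n) \leq \phi\subpconj(n) \leq \phi\subconj(n) \leq |\ball_X(n)|$ is bounded above by a linear function and below by $1$, taking $n$-th roots forces all three sequences $\sqrt[n]{\phi_*(n)}$ to tend to $1$, and hence all three exponential growth rates equal $0 = \grate$.

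There is really no serious obstacle here: the corollary is a routine distillation of Theorem \ref{thm:cgrate}, the key observation being that the polynomial factor $1/n$ appearing in the sandwich bounds is swallowed when one takes $n$-th roots, so it has no effect on the exponential growth rate. The only thing requiring care is treating the elementary case separately, since Theorem \ref{thm:cgrate} is stated only for non-elementary hyperbolic groups.
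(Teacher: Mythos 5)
Your non-elementary case is exactly the argument the paper intends: Corollary \ref{growth_rates} is presented there as a direct consequence of Theorem \ref{thm:cgrate}, with no further proof, and the whole content is precisely your observation that after taking $n$-th roots and logarithms the polynomial factor $1/n$ in the sandwich bounds disappears, so all three growth rates converge to $\grate$. So for non-elementary $G$ your write-up coincides with the paper's (implicit) proof.

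The one step that does not hold as written is the chain $1\leq \phi\subcomm(n)\leq \phi\subpconj(n)$ in the elementary case. The inequality $\phi\subcomm(n)\leq\phi\subpconj(n)$ is part of the \emph{conclusion} of Theorem \ref{thm:cgrate}, established (via Corollary \ref{cor:commareprim} and Lemma \ref{lem:2Mprimitive}) only for non-elementary groups, and it genuinely fails for virtually cyclic ones: in the infinite dihedral group $D_\infty=\langle a,b\mid a^2=b^2=1\rangle$ with $X=\{a,b\}$, all torsion elements form one commensurability class and all translations another, so $\phi\subcomm(n)=2$ for $n\geq 2$, whereas $ab$ and $ba=(ab)^{-1}$ are the only primitive elements and are conjugate, so $\phi\subpconj(n)=1$. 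Similarly $\phi\subpconj(n)\geq 1$ can fail, since a finite hyperbolic group has no primitive elements at all (and there are even infinite virtually cyclic groups, such as $\Z/3\rtimes\Z$ with the generator acting by inversion, with none), so for those degenerate groups the assertion about $\phi\subpconj$ must be read as vacuous or excluded. The repair is immediate and does not change your conclusion: in the virtually cyclic case bound each function \emph{separately}, using $1\leq\phi\subconj(n)$, $1\leq\phi\subcomm(n)$, both at most $|\ball_X(n)|$, which grows at most linearly, and for $\phi\subpconj$ use that it is bounded (finitely many primitive conjugacy classes) and eventually at least $1$ whenever a primitive element exists. With that adjustment your elementary case goes through; the paper itself simply does not discuss it.
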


The proof of Theorem \ref{thm:cgrate} requires two key ingredients: the first is Patterson-Sullivan theory, which is used to obtain bounds for the growth of  hyperbolic groups; 
the second is the fact that in hyperbolic groups ``conjugacy $\cong$ cyclic permutation'', i.e. two elements are conjugate if and only if some of their cyclic permutations are conjugated by an element of universally bounded length.

Our second main result concerns the Rivin conjecture for {\it acylindrically  hyperbolic groups}, that is, groups acting non-elementary and acylindrically by isometries on a hyperbolic space (more details in Section \ref{sec:AH}). The class of acylindrically hyperbolic groups contains non-elementary
hyperbolic and relatively hyperbolic groups, all but finitely many mapping class
groups of punctured closed surfaces, $\mathrm{Out}(F_n)$ for $n \geq 2$, directly indecomposable right-angled
Artin groups, 1-relator groups with at least 3 generators, most 3-manifold groups, $C'(\frac{1}{6})$ small cancellation groups and many
other examples (see \cite{DGO,GS,MO}). 

Based on our results we propose the following natural extension of Rivin's conjecture.
\begin{SRivC}
Let $G$ be a finitely generated acylindrically hyperbolic group.
Then $\cG\subconj$, $\cG\subpconj$ and $\cG\subcomm$ are transcendental.
\end{SRivC}

The action of acylindrically hyperbolic groups on hyperbolic spaces is typically
neither proper, nor co-compact; hence it is not clear how to define Patterson-Sullivan measures on the boundary (recently, \cite{Yang} has made progress in the case of 
relatively hyperbolic groups). Therefore a result like Theorem \ref{thm:cgrate} for finitely generated acylindrically hyperbolic groups seems out of reach with the current techniques. This is why we approach the above conjecture from the point of view of formal languages.

Recall that {\it a language} $\cL$ is a set of words over some finite {\it alphabet} $A$, that is, $\cL$ is a subset of $A^*$, the free monoid generated by $A$. Languages have been categorized  into several classes according to their complexity, the simplest ones being regular and context-free, with the class of unambiguous context-free languages strictly containing all regular languages and being contained in the set of all context-free ones. For these low-level languages we can match the computational complexity with an algebraic characterization as follows: the growth series of a regular language is rational, and 
 the growth series of an unambiguous context-free language (the definition is not necessary in this paper, see \cite{hu} for details)  is algebraic. 

\begin{theorem}[Chomsky-Sch\"utzemberger]\label{thm:ChomskyS} If $\cL\subseteq A^*$ is
unambiguous context-free, then $\cG_\cL(z)= \sum_{n=0}^{\infty} \sharp \{W\in \cL \mid \ell(W)\leq n\} z^{n}\in \Z[[z]]$ is algebraic over $\Q(z)$.
\end{theorem}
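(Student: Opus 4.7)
The plan is to deduce algebraicity of $\cG_\cL$ from an unambiguous context-free grammar presenting $\cL$, by translating its production rules into a finite system of polynomial equations satisfied by the length generating functions of its non-terminals. First observe that $\cG_\cL(z) = F(z)/(1-z)$, where $F(z) = \sum_{W \in \cL} z^{\ell(W)}$; since algebraic power series over $\Q(z)$ form a field containing $\Q(z)$, it suffices to prove $F$ is algebraic.

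Fix an unambiguous context-free grammar $\Gamma = (V, A, R, S)$ with non-terminals $V = \{N_1, \ldots, N_k\}$, $N_1 = S$, which we may assume is in Greibach normal form. For each $i$, let $\cL_i$ be the language derivable from $N_i$ and set $F_i(z) = \sum_{W \in \cL_i} z^{\ell(W)}$. To each production $N_i \to \alpha$, with $\alpha$ a string of terminals and non-terminals, associate the monomial $P_\alpha(z, y_1, \ldots, y_k)$ obtained by replacing each terminal by $z$, each non-terminal $N_l$ by the indeterminate $y_l$, and concatenation by multiplication. Unambiguity guarantees that every $W \in \cL_i$ admits a unique leftmost derivation from $N_i$, so summing over productions for $N_i$ yields the identity
\[
F_i(z) = \sum_{N_i \to \alpha} P_\alpha(z, F_1(z), \ldots, F_k(z)), \qquad i = 1, \ldots, k.
\]
In Greibach normal form each $P_\alpha$ carries a factor of $z$, so $F_i(0) = 0$ and the Jacobian of the system $G_i(z, y) := y_i - \sum_\alpha P_\alpha$ with respect to $(y_1,\ldots,y_k)$ reduces to the identity matrix at $z = 0$. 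The formal implicit function theorem then identifies $(F_1, \ldots, F_k)$ as the unique solution of $G_i = 0$ in $\Z[[z]]$ with those initial conditions. Iterating resultants to eliminate $y_2, \ldots, y_k$ from this system produces a nonzero polynomial $Q(z, y) \in \Z[z, y]$ with $Q(z, F_1(z)) = 0$, proving algebraicity of $F = F_1$ over $\Q(z)$, and hence of $\cG_\cL$.

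The main obstacle is ensuring the elimination step produces a nonzero polynomial relation: without a non-degeneracy condition, successive resultants could vanish identically even when the individual equations do not. The Greibach normalization supplies the needed non-degeneracy by making the Jacobian invertible modulo $z$, which simultaneously forces uniqueness of the formal solution and prevents the resultants from collapsing. One must also confirm that the counting identity for $F_i$ genuinely reflects unambiguity rather than being a multiple-derivation artefact; this is immediate once one observes that the productions issuing from $N_i$ partition the set of leftmost derivations according to the first rule applied, so each $W \in \cL_i$ contributes to exactly one summand on the right-hand side.
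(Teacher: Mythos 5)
The paper never proves this statement: it quotes the classical Chomsky--Sch\"utzenberger theorem as a black box (with pointers to the original paper, to Flajolet, and to Hopcroft--Ullman), so there is no internal argument to compare yours against. What you wrote is the standard textbook proof -- pass from the cumulative to the strict series via the factor $1/(1-z)$ (exactly the reduction the paper itself records in Section 2), turn an unambiguous grammar into a polynomial system for the generating functions of the nonterminals, and eliminate -- and in outline it is the right proof.

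Two steps, however, are asserted rather than proved, and they are precisely where the classical treatments spend their effort. First, ``we may assume the grammar is in Greibach normal form'' silently uses the nontrivial fact that the conversion can be done preserving unambiguity; you must also dispose of the empty word separately (a GNF grammar cannot produce it; harmless, since adding a constant preserves algebraicity) and trim the grammar so every $N_i$ is reachable and productive, because unambiguity is a hypothesis only about derivations from $S$, and your identity for $F_i$ needs unique derivations from every nonterminal. Second, and more seriously, the elimination step is not justified: you correctly identify that iterated resultants may vanish identically, but the claim that invertibility of the Jacobian modulo $z$ ``prevents the resultants from collapsing'' is not an argument -- the Jacobian condition is a local statement at one point giving existence and uniqueness of the formal solution, while vanishing of a resultant is a global statement about common factors, and nothing you wrote connects the two. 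This is the actual content of the theorem in the classical proofs, handled there either by a careful one-variable-at-a-time elimination in which one checks the reduced system is again proper (as in Kuich--Salomaa or Stanley's treatment of proper algebraic systems), or, more slickly, by invoking Hensel's lemma in the henselization of $\Q[z]_{(z)}$ (the ring of algebraic power series): your Jacobian computation shows the system has a solution there, and uniqueness of the formal solution in $\Q[[z]]$ forces $(F_1,\dots,F_k)$ to coincide with it, giving algebraicity of each $F_i$ directly and bypassing resultants altogether. As written, that gap needs to be filled; with it filled, the proof is complete.
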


Chomsky-Sch\"utzemberger's theorem would imply, if the Strengthened Rivin's conjecture were confirmed, that no language of minimal length conjugacy/conjugacy primitive/commensurability representatives can be unambiguous context-free. This is exactly our second main result.

\begin{theorem}\label{thm:AHregular}
Let $G$ be a finitely generated acylindrically hyperbolic group, $X$ any finite symmetric generating set, and $\cL\subconj/\cL\subpconj/\cL\subcomm$ a subset of $X^*$ containing  exactly one minimal
length representative of each conjugacy/primitive conjugacy/commensurability  class. Then $\cL\subconj/\cL\subpconj/\cL\subcomm$ is not unambiguous context-free. In particular, such a language cannot be regular.
\end{theorem}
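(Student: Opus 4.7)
The approach is via the Chomsky--Sch\"utzemberger theorem (Theorem~\ref{thm:ChomskyS}): if $\cL\subconj$ were unambiguous context-free, then so would be $\cL\subconj\cap R$ for any regular $R\subseteq X^\ast$, and all such intersections would have growth series algebraic over $\Q(z)$. Note that the growth series of $\cL\subconj$ coincides with $\cG\subconj(z)=\sum_n\phi\subconj(n)z^n$, since any two length-minimizing transversals of the conjugacy classes are in length-preserving bijection (and analogously for $\cL\subpconj$, $\cL\subcomm$). The plan is to pick $R$ so that $\cL\subconj\cap R$ isolates the conjugacy growth of a non-abelian free subgroup of $G$, which is already known to be transcendental by Theorem~\ref{thm:rivinconj}(2).

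The main geometric input is the Dahmani--Guirardel--Osin theorem that every acylindrically hyperbolic group $G$ contains a hyperbolically embedded non-abelian free subgroup $F=F(a,b)\hookrightarrow_h G$. From this embedding I would extract two consequences. First, $F$ is undistorted in $G$, so $|{\cdot}|_F$ and $|{\cdot}|_X$ restricted to $F$ are bi-Lipschitz equivalent. Second, a malnormality-type statement: for cyclically reduced $u,v\in F$ of sufficiently large $F$-length, $u\conj v$ in $G$ if and only if $u$ and $v$ are $F$-cyclic permutations of each other, and moreover the conjugator can be taken inside $F$ (with analogous statements for primitivity; for commensurability one invokes that $F^g\cap F$ infinite forces $g\in F$). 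Setting $R=\{a,b,a^{-1},b^{-1}\}^\ast\subseteq X^\ast$, these two consequences together imply that for all but finitely many $F$-conjugacy classes the minimal $X$-length $G$-representative lies in $R$ and is itself a cyclically reduced $F$-word: any putative shortcut outside $R$ would, through its implicit conjugator, contradict either undistortion or malnormality. Hence $\cL\subconj\cap R$ differs, up to a finite symmetric difference, from a length-minimizing transversal of the $F_2$-conjugacy classes (with lengths measured in $X$).

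Applying Theorem~\ref{thm:rivinconj}(2) to the hyperbolic non-virtually-cyclic group $F_2$, its conjugacy growth series is transcendental, and the bi-Lipschitz distortion between $|{\cdot}|_F$ and $|{\cdot}|_X$ on $F$ transfers this transcendence to the growth series of $\cL\subconj\cap R$, yielding the desired contradiction with algebraicity. The arguments for $\cL\subpconj$ and $\cL\subcomm$ then run in parallel, via the appropriate malnormality variants. The main obstacle is the ``no shortcut'' step above: ruling out that a long cyclically reduced $F$-word can be $G$-conjugated to a strictly shorter $X$-word lying outside of $R$. This is precisely where the hyperbolic embedding carries the essential weight---long elements of $F$ act loxodromically on the hyperbolic space on which $G$ acts, with uniformly quasi-geodesic orbits, so any $G$-conjugator achieving a substantial $X$-length reduction would have to stabilize a long portion of such an orbit, forcing it into a bounded neighborhood of $F$ and ultimately into $F$ itself by malnormality.
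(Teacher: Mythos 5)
Your overall strategy (pass to a hyperbolically embedded free/virtually free subgroup and invoke Theorem \ref{thm:rivinconj} through Chomsky--Sch\"utzemberger) is the same as the paper's, but your reduction step has two genuine gaps. First, intersecting $\cL\subconj$ with the regular set $R=\{a,b,a^{-1},b^{-1}\}^*$ does not isolate the conjugacy classes meeting $F$: the words of $\cL\subconj$ are arbitrary minimal-length words over $X$ (the letters $a^{\pm1},b^{\pm1}$ need not even lie in $X$), a geodesic $X$-spelling of an element conjugate into $F$ will in general use letters outside any fixed free basis, and even when a class happens to admit a minimal representative spelled in $R$, the representative actually chosen in $\cL\subconj$ need not be that one. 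Nor is the set of $X$-words representing elements conjugate into $F$ a regular condition, so no regular filter can do this job. This is precisely the difficulty the paper spends Section \ref{sec:languages} on: instead of filtering, it \emph{transduces}. Using the relative BCD property supplied by Theorem \ref{thm:poison}, the Morse property, and a padded product alphabet, Lemmas \ref{lem:BCDpairs}--\ref{lem:defDelta} and Proposition \ref{prop:YandR} produce a regular pair-language $\cM_2$ attaching to each $U\in\cL\cap(H-\cF^H)^G$ a word $V$ over a finite generating set $Y$ of $H$ with $\unpad(V)\in\geocl(H,Y)$, and unambiguous context-freeness is carried along via inverse morphism, intersection with a regular language, and an injective morphic image (Lemma \ref{lem:CFfacts}).

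Second, your concluding step --- transferring transcendence from the conjugacy growth series of $F_2$ (standard generators) to the growth series of $\cL\subconj\cap R$ via the bi-Lipschitz comparison of $|\cdot|_F$ and $|\cdot|_X$ --- does not work: neither transcendence nor the underlying asymptotics $A e^{\grate n}/n\le\phi(n)\le B e^{\grate n}/n$ of Theorem \ref{thm:cgrate} is invariant under a bi-Lipschitz change of length, which only compares $\phi(n/C)$ with $\phi(Cn)$, quantities that differ by exponential factors. The paper avoids this by arranging that the output language $\cR$ consists, up to a finite symmetric difference, of genuine minimal $Y$-length $H$-conjugacy (resp.\ primitive, resp.\ primitive-in-commensurability) representatives, so that Corollary \ref{cor:notUCF} applies to $(H,Y)$ directly; the commensurability case needs the extra Lemma \ref{lem:acylprim}, since minimality in the $G$-commensurability class does not transfer to $H$. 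Your malnormality/undistortion heuristics point in the right direction, but to close these gaps you would essentially have to rebuild the paper's transduction machinery rather than argue by intersection with $R$.
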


The proof of Theorem \ref{thm:AHregular} combines closure properties of formal languages, which will be discussed in Section \ref{sec:languages}, together with the ``conjugacy $\cong$ cyclic permutation" phenomenon for conjugacy classes of hyperbolically embedded subgroups (Theorem \ref{thm:poison}) proved in Section \ref{sec:AH}.

\section{Transcendence of growth series and the proof of Theorem \ref{thm:rivinconj}}\label{sec:series}

 We say that a formal power series
$f(z) \in \Z[[z]]$ is {\em rational} if there exist nonzero polynomials $p(z), q(z) \in \Z[z]$  such that $f(z)=\frac{p(z)}{q(z)}$; more generally, $f(z)$ is {\em algebraic} over $\Q(z)$ if there exists a nontrivial polynomial $p(z,u)\in\Q(z,u)$ such that $p(z,f(z))=0.$ If $f(z)$ is not algebraic over $\Q(z)$, we say that $f(z)$ is {\em transcendental} over $\Q(z)$.

Any language $\cL$ over $X$ gives rise to a
{\em strict growth function} $\sigma_\cL:\N \cup \{0\} \rightarrow \N \cup \{0\}$
defined by  $$\sigma_\cL(n) := |\{W \in \cL \mid \ell(W) = n\}|,$$
and a {\em cumulative growth function} $\phi_\cL:\N \cup \{0\} \rightarrow \N \cup \{0\}$ 
defined by  $$\phi_\cL(n) := |\{W \in \cL \mid \ell(W) \leq n\}|.$$
These, in turn, give rise to the strict growth series 
$\sum_{i=0}^\infty \sigma_{\cL}(n) z^n$ 
and cumulative growth series 
$\sum_{i=0}^\infty \phi_{\cL}(n) z^n$ 
of the language $\cL$.
In the Introduction, Theorem \ref{thm:ChomskyS} is stated for the cumulative growth series of the language, while in the literature such results may be stated
for the strict growth series. We observe that these two formulations are equivalent, since from the relation 
$$ \sum_{i=0}^\infty \sigma_{\cL}(n)z^n =(1-z)\sum_{i=0}^\infty \phi_{\cL}(n) z^n $$
the strict growth series is rational (algebraic) if and only if the cumulative growth series is rational (algebraic).

We now deduce Theorem \ref{thm:rivinconj} from Theorem \ref{thm:cgrate}.
 Our proof relies on the lemma below, which describes the asymptotics of the coefficients of an algebraic growth series.

\begin{lemma}[\cite{Flajolet}, Theorem D] \label{lem:algebraic_coefficients}

If  $\sum_{i=0}^\infty b_nz^n$ is the series expansion of an algebraic function that is analytic at the origin, then there exist algebraic numbers $\alpha_j$ and $c_j$, $|\alpha_j|=1$, $0\leq j \leq k$, where $k$ is some positive integer, and a positive algebraic number $\lambda$  such that 
\begin{equation}\label{alg}
b_n = \sum_{i=0}^k c_i n^p \alpha_i^n \lambda^n + O(n^q \lambda^n),
\end{equation}
where $p \in \mathbb{Q} \setminus \{-1, -2, \dots\}$ and $q<p$.
\end{lemma}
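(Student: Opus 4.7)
The plan is to apply the singularity analysis paradigm: locate the singularities of $f(z) = \sum b_n z^n$ on its circle of convergence, compute local Puiseux expansions there, and use a transfer theorem to pass from these local expansions to the asymptotics of the coefficients. I would start by choosing $P(z,u) \in \Q[z,u]$ of minimal $u$-degree with $P(z,f(z)) = 0$. The singularities of $f$ then lie in the (finite) algebraic set consisting of the zeros of the leading coefficient of $P$ in $u$ together with the zeros of the $u$-discriminant of $P$. Hence $f$ has only finitely many singularities, all algebraic numbers; write $\rho = 1/\lambda$ for the radius of convergence (so $\lambda$ is a positive algebraic number) and $\rho\alpha_0^{-1},\dots,\rho\alpha_k^{-1}$, with $|\alpha_i|=1$, for the dominant singularities.

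Near each $\rho_i := \rho\alpha_i^{-1}$, the Newton--Puiseux theorem produces an expansion of $f$ as a finite sum of terms $c^* (1-z/\rho_i)^{r}$ with $r\in\Q$, plus a function analytic at $\rho_i$. After restricting to the dominant singularities whose leading singular exponent $-p-1$ is smallest as a real number (so that any dominant singularities with strictly larger leading exponent get absorbed into the error term), I may assume a common $p$; this is consistent with $p\in\Q$ because the Galois group of $\overline{\Q}/\Q$ permutes the singularities and preserves the local Puiseux data. The exponent $p$ must lie outside $\{-1,-2,\dots\}$, since otherwise $(1-z/\rho_i)^{-p-1}$ would itself be analytic and one would pass to the next Puiseux term.

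Next, I would show that $f$ extends analytically to an ``indented disk'' of the form $\{|z|<\rho+\eta\}$ with thin sectors issuing outward from each $\rho_i$ removed, using that an algebraic function admits analytic continuation along every path avoiding its finite singular set. The transfer theorem of Flajolet--Odlyzko (or classical Darboux's method) then gives, for each $i$,
\[
[z^n]\, c_i^*\,(1-z/\rho_i)^{-p-1} \;=\; \frac{c_i^*}{\Gamma(p+1)}\, n^p\, \rho_i^{-n} + O(n^{p-1}\rho_i^{-n}),
\]
and since $\rho_i^{-n} = \alpha_i^n \lambda^n$, setting $c_i := c_i^*/\Gamma(p+1)$ (algebraic, as $c_i^*$ is algebraic and $p$ is rational) and summing over $i$ produces the main term of \eqref{alg}. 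The error $O(n^q\lambda^n)$ absorbs both the sub-leading Puiseux terms at the dominant singularities and the geometrically smaller contributions from the non-dominant singularities.

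The main obstacle I expect is the uniform analytic-continuation step: one must verify that $f$ extends to a common $\Delta$-domain around all dominant singularities at once, with the remainders in each local Puiseux expansion valid uniformly on the boundary of this domain. For algebraic functions this is tractable because branch cuts can be placed disjointly along rays from the $\rho_i$ outward, but proving that the Puiseux coefficients are themselves algebraic, and that the transfer estimates are uniform, will require a careful Newton-polygon analysis of $P$ localized at each $\rho_i$, together with a compactness argument along the boundary of the $\Delta$-domain.
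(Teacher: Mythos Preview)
The paper does not prove this lemma; it is quoted verbatim from Flajolet (Theorem~D of \cite{Flajolet}) and used as a black box in the proof of Theorem~\ref{thm:rivinconj}. Your sketch follows exactly the singularity-analysis route that Flajolet himself uses (locate the finitely many algebraic singularities via the discriminant and leading coefficient of $P$, Newton--Puiseux expand at the dominant ones, then transfer), so there is nothing to compare against within the present paper.

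One genuine slip in your write-up: you claim $c_i = c_i^*/\Gamma(p+1)$ is algebraic ``as $c_i^*$ is algebraic and $p$ is rational''. This is false in general: $\Gamma$ at a non-integer rational is typically transcendental (e.g.\ $\Gamma(1/2)=\sqrt{\pi}$), so dividing an algebraic $c_i^*$ by it need not give an algebraic number. In fact the lemma as quoted here is slightly loose on this point relative to Flajolet's original formulation; fortunately the only feature actually used downstream in the paper is that $p\in\Q\setminus\{-1,-2,\dots\}$, and the algebraicity of the $c_i$ plays no role in the proof of Theorem~\ref{thm:rivinconj}. If you want to salvage your argument as a self-contained proof of the lemma exactly as stated, you would need either to weaken the algebraicity claim on the $c_i$ or to absorb the $\Gamma$-factor into a different normalisation; the former is what is really needed for the application.
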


\begin{proof}[Proof of Theorem \ref{thm:rivinconj}]
First observe that if $G$ is virtually cyclic, the result for $\cG\subconj$ was proved in \cite[Theorem 4.2]{CHHR}, and $\cG\subpconj$ and $\cG\subcomm$ are polynomials, as in a virtually cyclic group there are finitely many primitive conjugacy classes and commensurability classes.

So, assume that $G$ is not virtually cyclic and let $b_n=\phi\subconj(n)$ or $b_n=\phi\subpconj(n)$ or $b_n=\phi\subcomm(n)$. By Theorem \ref{thm:cgrate} the sequence $b_n$ satisfies $A\leq b_n \frac{n}{e^{n\grate}} \leq B$ for $n\geq n_0$.
By the root criterion for convergence, $\sum_{n=0}^\infty b_n z^n$ converges for $|z|<1/e^\grate$ and therefore is analytic at the origin.
Suppose moreover that $\sum_{n=0}^\infty b_n z^n$ is an algebraic function; then by Lemma \ref{lem:algebraic_coefficients} we can assume that $b_n$ has the form (\ref{alg}).

Since $A\leq b_n \frac{n}{e^{n\grate}} \leq B$ for $n\geq n_0$, we have that  $\lambda = e^{\grate}$ and $p=-1$.  
Thus by Lemma \ref{lem:algebraic_coefficients} the  growth series $\sum b_n z^n$ cannot be algebraic, so it is transcendental over $\mathbb{Q}(z)$.
\end{proof}
\begin{remark}\label{rem:quasicommgrowth}
The proof above, together with the assumption that $\phi\subpconj (n)$ satisfies the inequalities in Theorem \ref{thm:cgrate}, shows that if there is a constant $K>0$ and a function $\phi_\cL(n)$ such that $K^{-1} \phi\subpconj (n)\leq \phi_\cL(n)\leq K \phi\subpconj(n)$ for all $n$ greater than some $n_0$, then $\cG_\cL= \sum_{n\geq 0} \phi_\cL(n) z^n$ is transcendental over $\Q(z)$.
\end{remark}

\section{Permute and conjugate: the Bounded Conjugacy Diagrams (BCD) property}
One of the main ideas behind the proofs in this paper is that in a hyperbolic setting ``conjugacy $\cong$ cyclic permutation". In this section we
make precise this property, which we call the BCD property.
We start by fixing some notation used throughout the paper.

As already mentioned, by {\it a generating set of a group $G$} we mean a
set $X$ that generates $G$ as a monoid and that is closed under taking inverses. Every element of $G$
can be expressed as a word over $X$; for the sake of convenience we will identify a {\it word} $W\in X^*$ with the element
it represents in $G$. Given $U,V\in X^*$, we use $U\equiv V$ to denote word equality and $U =_G V$ to denote equality between the group elements represented by these words.
For a word $U$ in $X^*$, $\ell(U) (= \ell_X(U))$ denotes its length. For $g \in G$,
$|g|_X:= \min \{\ell(U) \mid U=_G g\}$. Our identification of a word with the group element it represents allows for the notation $|U|_X=\min\{\ell(V) \mid V=_G U\}$.
We denote  the closed ball of radius $n$ of $G$ with respect to $X$ by
$\ball_X(n) \coloneq \{g\in G \mid |g|_X \leq n\}.$

Our notation for paths in $\ga(G,X)$, the Cayley graph of $G$ with respect to $X$, is as follows. By $L[0,n]$ we denote the unlabelled graph with vertex set $\{0,1,2,\dots, n\}$ and edges joining $i$ 
to $i+1$ for $i=0,\dots, n-1$.  A {\it path $p$ of length $n$} in $\ga(G,X)$ is a
combinatorial graph map $p\colon L[0,n]\to \ga(G,X)$.
 In particular, $p(i)$ is a vertex of $\ga(G,X)$, 
$p(0)$ will be denoted by $p_-$ and $p(n)$ by  $p_+$. 
Let $\ell(p)$ be the length of a path $p$ and $\Lab(p)$ its label, i.e. the word read while traversing the edges of the path. A path $p$ in $\ga(G,X)$ is {\it geodesic} if 
$\ell(p)$ is minimal among the lengths of all paths $q$ with same endpoints.  
Let  $\lambda \geq 1$ and $\e\geq 0$. 
A path $p$ is a $(\lambda, \e)$-{\it quasi-geodesic} 
if for any subpath $q$ of $p$ we have 
$$\ell ( q )\leq \lambda \d(q_{-},q_{+})+\e.$$Let $K>0$. 
A path $p$ is a {\it $K$-local (quasi-)geodesic} 
if every subpath of $p$ of length less than or equal to $K$ is (quasi-)geodesic.

A word $U$ is a geodesic, cyclic geodesic, quasi-geodesic, etc. if any path in the Cayley graph $\ga(G,X)$ labelled  by $U$ has this property.
By a {\it a cyclic permutation of $W$} we mean a cyclic shift of the letters of $W$.
A word is a {\it cyclic ($(\lambda,\e)$-quasi-)geodesic} if all its cyclic permutations are ($(\lambda,\e)$-quasi-)geodesics. 
We use $\geol(G,X)$ and $\geocpl(G,X)$ to denote the set of all geodesic and cyclic geodesic words, respectively, and denote by $\geocl(G,X)$ the set of all words of minimal length in their conjugacy class. Note that $$\geocl(G,X)\subseteq \geocpl(G,X)\subseteq \geol(G,X).$$

Let $[g]\subconj$ denote the conjugacy class of $g \in G$.
\begin{definition}\label{def:BCD}
Let $\lambda \geq 1$ and $\e\geq 0$. A Cayley graph $\ga(G,X)$ satisfies the $(\lambda,\e)$-Bounded Conjugacy Diagram ($(\lambda,\e)$-BCD) property 
if  there is a constant $D=D(\lambda,\e)$
so that for any cyclic $(\lambda,\e)$-quasi-geodesics $U,V$, with  $V\in [U]\subconj$,
either 
\begin{enumerate}
\item[(1)] $\max\{\ell(U),\ell(V)\}\leq D$, or

\item[(2)] there exist $k\in \ball_X(D)$  and cyclic permutations $U'$ and $V'$ of $U$ and $V$ such that  $U'=_G k^{-1} V' k$.
\end{enumerate}
\end{definition}
We say that $\ga(G,X)$ has the BCD property if it has the $(\lambda,\e)$-BCD property for some $\lambda\geq 1$ and $\e\geq 0$. We will often use the fact that hyperbolic Cayley graphs have the BCD property (we allow infinite generating sets, see \cite[III.$\Gamma$. Lemma 2.9]{BridsonHaefliger}). 

Other examples with the BCD property can be found in \cite{AC3}, where it is shown that Cayley graphs of groups hyperbolic relative to abelian 
subgroups have the $(1,0)$-BCD property with respect to certain generating sets. 
In \cite{AS15} an analogue property to BCD is proved; there the input words $U$ and $V$ are quasi-geodesics
and not necessarily cyclic quasi-geodesics.

\section{Proof of Theorem \ref{thm:cgrate}: the conjugacy growth function bounds}

In this section we determine the upper and lower bounds for the  growth functions  of Theorem \ref{thm:cgrate}. We rely on the bounds obtained by Coornaert and Knieper in \cite{ckIJAC} and \cite{ckGAFA} for the growth of primitive conjugacy classes. 

\begin{definition}
Let $G$ be a group and $g\in G$ be an  element of infinite order. Let 
$$E_G^+(g)=\{h\in G \mid h^{-1} g^m h=g^m \text{ for some }m\in \Z-\{0\} \}.$$

We say that $g \in G$ is {\it primitive} if $g$ has infinite order and there is a finite subgroup $F\leqslant G$ such that $E_G^+(g) = \langle g \rangle \cdot F$ and $E_G^+(g)\cong \langle g \rangle \times F$.
\end{definition}

Notice that when $G$ is torsion-free, if $g$ is primitive then $g$ has infinite order and  $E_G^+(g)=\gen{g}$. Since $C_G(g)$, the centralizer of $g$ in $G$, is a subgroup of $E_G^+(g)$, it follows that $\gen{g}=C_G(g)$ and $g$ generates its own centralizer. In particular, $g$ cannot be a non-trivial power of another element in the group, which is the standard definition of primitivity. In the case when torsion is present, Proposition \ref{prop:prim}~(b) provides a characterization of primitives as non-powers in general: $g$ is primitive if its only `roots' are $g$ and $g^{-1}$.

The following lower bound is implicit in the proof of \cite[Thm. 1.1]{ckGAFA}.

\begin{theorem}\cite[Thm. 1.1]{ckGAFA}\label{prim_bounds}
Let $G$ be a  non-elementary hyperbolic group. 
There are positive constants $A$ and $n_0$ such that
$$A\frac{e^{\grate n}}{n} \leq \phi_{\cL\subpconj}(n) $$ for all $n \geq n_0$.
\end{theorem}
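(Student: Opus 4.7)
My plan is to extract the lower bound from the Coornaert--Knieper proof of \cite[Thm.~1.1]{ckGAFA}. Three ingredients combine. The first is Coornaert's Patterson--Sullivan spherical growth estimate
$$ c_1 e^{\grate n} \leq |\ball_X(n)| \leq c_2 e^{\grate n},\quad n\geq n_0, $$
together with the observation that proper powers contribute at most $\sum_{k\geq 2} |\ball_X(\lfloor n/k\rfloor)| \leq C e^{\grate n/2}$ to $\ball_X(n)$, which is exponentially smaller than $e^{\grate n}$. The second ingredient, also from \cite{ckGAFA}, is that a definite positive proportion of length-$n$ elements are cyclic-minimum in their conjugacy class; combining the two estimates produces at least $c\,e^{\grate n}$ primitive cyclic-minimum elements of length exactly $n$ for $n$ sufficiently large.

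The third ingredient is a per-class bound: if $[g]\subconj$ is a primitive conjugacy class of minimum length $m$, then the set of length-$m$ cyclic-minimum representatives of $[g]\subconj$ has cardinality at most $K m$, for some $K=K(G,X)$ depending only on the group and generating set. My plan here is to fix a cyclic geodesic $g_0\in [g]\subconj$ of length $m$ and apply the BCD property (Definition \ref{def:BCD}) to any other length-$m$ representative $h$: this yields $h' =_G k^{-1} g_0' k$ for cyclic permutations $h'$, $g_0'$ of $h$, $g_0$ and some $k\in\ball_X(D)$. The cyclic equivalence classes of length-$m$ representatives are thereby indexed by a subset of $\ball_X(2D)$ modulo the centralizer $\langle g_0\rangle$, which has bounded cardinality, while each such cyclic equivalence class contributes at most $m$ elements.

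Combining the three ingredients, the number $P_k$ of primitive conjugacy classes of minimum length exactly $k$ satisfies $P_k \geq c\,e^{\grate k}/(K k)$, and summing over $k\leq n$ gives the desired
$$ \phi_{\cL\subpconj}(n) \ \geq \ \sum_{k\leq n} \frac{c\,e^{\grate k}}{K k} \ \geq \ A\,\frac{e^{\grate n}}{n}. $$
The main obstacle is the second ingredient, the prevalence of cyclic-minimum elements among length-$n$ elements. In \cite{ckGAFA} this is the step that really uses the self-similarity of the Patterson--Sullivan measure on $\partial G$; the BCD framework of Section 3 streamlines the per-class counting of the third ingredient but does not bypass this Patterson--Sullivan input.
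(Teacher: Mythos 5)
First, a point of comparison: the paper does not prove this statement at all --- it is quoted from Coornaert--Knieper, with the remark that the lower bound is ``implicit in the proof of \cite[Thm.~1.1]{ckGAFA}''. So there is no internal argument to measure your sketch against, and your decision to outsource the Patterson--Sullivan genericity statement (your second ingredient) to \cite{ckGAFA} is consistent with what the paper itself does. As a reconstruction of the Coornaert--Knieper strategy your outline is reasonable in shape, but two of the steps you do argue yourself have genuine problems.

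(i) The exclusion of non-primitive elements is not justified as stated: a root $h$ of $g=h^k\in\ball_X(n)$ need not satisfy $|h|_X\leq \lfloor n/k\rfloor$ (take $g=(cac^{-1})^2$ with $|c|_X$ about $n/2$ and $a$ short: its root has length about $n$). One only controls roots up to conjugacy (compare Lemma~\ref{lem:roots}), and in the presence of torsion ``primitive'' in the paper's sense is strictly stronger than ``not a proper power'' (Proposition~\ref{prop:prim}, Lemma~\ref{lem:notprim}); the correct negligibility statement needs an argument in the spirit of Lemma~\ref{lem:negli}, not a count of short roots. The conclusion (exponential negligibility) is true, but your justification would fail. (ii) More seriously, the per-class bound $Km$: your BCD bookkeeping indexes a cyclic class of length-$m$ representatives by a cyclic permutation $g_0'$ of $g_0$ (about $m$ choices) together with $k\in\ball_X(D)$, so as written it yields at most $|\ball_X(D)|\,m$ cyclic classes and hence $O(m^2)$ minimal-length representatives, not $O(m)$. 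The assertion that the cyclic classes are ``indexed by a subset of $\ball_X(2D)$ modulo $\langle g_0\rangle$'', i.e.\ boundedly many, is exactly what needs proof: the conjugator produced by BCD is $V_1k$ with $V_1$ a prefix of a word for $g_0$, and the coset $\langle g_0\rangle V_1k$ is at distance up to roughly $m/2$ from the identity, so it does not lie in a bounded ball; moreover $k^{-1}g_0'k$ and $k^{-1}g_0''k$ for different cyclic permutations $g_0',g_0''$ need not be cyclic permutations of one another. A genuine argument is needed, e.g.\ that any minimal-length (or near-minimal, which is what the Patterson--Sullivan step actually produces) conjugate $c^{-1}g_0c$ forces $c$ into a bounded neighbourhood of the quasi-axis of $g_0$, so that the cosets $C_G(g_0)c$ number $O(m)$. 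This gap is quantitatively fatal: with only $O(m^2)$ per class your method gives a lower bound of order $e^{\grate n}/n^2$, which is weaker than the stated $A\,e^{\grate n}/n$ and would not suffice for the paper's later use, where the exact order $e^{\grate n}/n$ on both sides is what forces $p=-1$ in Lemma~\ref{lem:algebraic_coefficients}.
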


The goal of this section is twofold: first, we obtain the upper bound missing in Theorem \ref{prim_bounds}. This is also based on the work of Coornaert and Knieper \cite{ckIJAC}, where the upper bound is obtained in the torsion-free case.
Second, we prove that there is a constant $K$ such that in a given
commensurability class there are at most $K$ primitive elements. This will give the lower bound for commensurability classes.

For the rest of the section, we assume that $G$ is a non-elementary hyperbolic group and $X$ is a fixed symmetric generating set. We let 
\begin{equation}\label{eq:finite}
M=M(G):=\sup \{|F| \,\mid\, |F| < \infty, F \leq G \}
\end{equation}
 be the supremum of the  finite subgroup sizes in $G$.
By \cite[III.$\Gamma$.Theorem 3.2]{BridsonHaefliger}, $M$ is finite.

In the next paragraphs we give further characterizations of primitive elements and show that our definition agrees with the one in \cite{ckGAFA}.

\subsection{Primitive elements}
Recall that an infinite order element $g$ of a hyperbolic group $G$ is called {\it loxodromic}. For a loxodromic $g$, 
 \cite[Theorem 4.3]{Osin} says that $E_G^{+}(g)$ is  the maximal, virtually cyclic subgroup of $G$ of type 
finite-by-(infinite cyclic) containing $g$. It is easy to see that $E_G^{+}(g)$ is the same as $H(g)$, the subgroup of $G$ consisting of elements which fix the ends $g^-$ and $g^+$ on the boundary $\partial G$ (see Section 6 in \cite{ckGAFA}).

Since $E_G^+(g)$ acts on the infinite line $\Z$ and fixes its ends, there is a unique surjective map $\pi = \pi_g \colon E_G^+(g) \to \Z=\langle t \rangle$ with $\pi (g)= t^k, k>0$, and finite kernel $F$.

We note that our definition of primitivity coincides with that of  strong primitivity in \cite[Section 6]{ckGAFA}, which is the condition (c) below.

\begin{proposition}\label{prop:prim}
Let $G$ be hyperbolic and $g\in G$ loxodromic. The following are equivalent:
\begin{enumerate}
\item[{\rm (a)}] $g$ is primitive,
\item[{\rm (b)}] for any $h\in G$ we have $g^m \neq h^n$ for all $0<|m|<|n|$,
\item[{\rm (c)}] $\pi_g(g)=t$.
\end{enumerate}
\end{proposition}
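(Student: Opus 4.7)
The plan is to argue via the cycle $(a) \Rightarrow (c) \Rightarrow (b) \Rightarrow (a)$, reading everything off from the short exact sequence
\[
1 \longrightarrow F \longrightarrow E_G^+(g) \xrightarrow{\pi_g} \Z \longrightarrow 1,
\]
where $F = \ker \pi_g$ is the unique maximal finite subgroup of $E_G^+(g)$ produced by Osin's theorem cited above.

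The direction $(a) \Rightarrow (c)$ amounts to unpacking definitions: if $E_G^+(g) = \gen{g} \cdot F$ with $E_G^+(g) \cong \gen{g} \times F$, the finite factor $F$ must coincide with the torsion part $\ker \pi_g$, and $\gen{g}$ maps isomorphically onto the $\Z$-quotient, so $\pi_g(g) = t$ after the sign convention $k>0$. For $(c) \Rightarrow (b)$, suppose for contradiction $g^m = h^n$ with $0 < |m| < |n|$. Then $h$ centralises $h^n = g^m$, so $h \in C_G(g^m) \subseteq E_G^+(g)$. Writing $\pi_g(h) = t^j$ and applying $\pi_g$ to $g^m = h^n$ yields $m = jn$. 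The case $j = 0$ is impossible, since then $h \in F$ is torsion and so is $h^n$, while $g^m$ is loxodromic; if $j \neq 0$, then $|m| = |j|\cdot|n| \geq |n|$, contradicting $|m|<|n|$.

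The main obstacle is $(b) \Rightarrow (a)$. Writing $\pi_g(g) = t^k$ with $k \geq 1$, choose $h \in E_G^+(g)$ with $\pi_g(h) = t$ (such an $h$ exists since $\Z$ is free, so the extension splits); then $g h^{-k} \in F$, i.e., $g = h^k f$ for some $f \in F$. If $k \geq 2$, a careful power computation inside $E_G^+(g) \cong F \rtimes \Z$, with exponent $N$ taken to be a common multiple of $|F|$ and of the order of the conjugation action of $h$ on $F$ (both bounded in terms of $|F|\leq M$), collapses the $F$-contribution and yields $g^N = h^{kN}$; then $(m,n) = (N, kN)$ violates (b). Hence $k = 1$, so $E_G^+(g) = \gen{g} \cdot F$ with $\gen{g} \cap F = \{1\}$, and the identification of this decomposition as $\gen{g} \times F$ completes (a). The technical heart of the argument is the explicit computation of powers in the semidirect product $F \rtimes \Z$ and the correct choice of the compensating exponent $N$; this is where the bulk of the work lies.
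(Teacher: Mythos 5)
Your argument is correct, but the implication cycle is different from the paper's. The paper proves (a)$\Rightarrow$(b), $\lnot$(c)$\Rightarrow\lnot$(b), and (c)$\Rightarrow$(a); you prove (a)$\Rightarrow$(c)$\Rightarrow$(b)$\Rightarrow$(a). Your (a)$\Rightarrow$(c) is a definitional unwind and your (c)$\Rightarrow$(b) is essentially the same calculation the paper makes for (a)$\Rightarrow$(b), phrased via $\pi_g$ rather than via the $\gen{g}\times F$ decomposition --- this is slightly more economical since you only invoke $\pi_g(g)=t$ rather than the full direct product structure. The genuine divergence is in the hard direction. The paper's $\lnot$(c)$\Rightarrow\lnot$(b) is a clean pigeonhole: choosing $h$ with $\pi_g(h)=t$, one writes $g^mf_m=h^{mk}$ with $f_m\in F$, and finiteness of $F$ forces $f_{m_1}=f_{m_2}$ for some $m_1\neq m_2$, giving $g^{m_1-m_2}=h^{(m_1-m_2)k}$ directly. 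Your (b)$\Rightarrow$(a) instead runs an explicit power computation in $F\rtimes\Z$: writing $g=h^kf$, you take $N$ divisible by both $|F|$ and the order of the conjugation action so that $(h^kf)^N=h^{kN}$, which gives the contradiction with (b) when $k\geq 2$. Be a little careful with the exponent: the least common multiple of $|F|$ and the order $d$ of conjugation by $h^k$ does not in general kill the accumulated $F$-factor $P=\phi^{d-1}(f)\cdots\phi(f)f$, but the product $N=|F|\cdot d$ does, since $(h^{kd}P)^{|F|}=h^{kd|F|}P^{|F|}=h^{kd|F|}$. The pigeonhole route avoids this bookkeeping entirely, which is the main thing it buys; your route is more hands-on but carries the same content, and then your closing ``$k=1$, hence $\gen{g}\cdot F$ is the direct product'' is exactly the paper's (c)$\Rightarrow$(a) step.
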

\begin{proof}
(a)$\Rightarrow$(b). Let $F\leqslant G$ be a finite subgroup such that $E_G^+(g)=\gen{g}\cdot F$ and $E_G^+(g)\cong\gen{g}\times F$. Suppose that $g^m = h^n$ for some $0<|m|<|n|$. Then $h$ is loxodromic and $h^{-1} g^m h=g^m$, so $h \in E_G^+(g)$, which means there are some $f \in F$ and $p\neq 0$ such that $h=g^pf$, which implies $g^m=h^n=g^{np}f^n$. This furthermore implies $m-np=0$, which contradicts $0<|m|<|n|$.

$\lnot$(c)$\Rightarrow \lnot$(b).
Let $k>1$ be such that $\pi_g(g)=t^k$. There exists $h \in E_G^+(g)$ such that $\pi_g(h)=t$. Then $\pi_g(g^m)=\pi_g(h^{mk})$  for all $m>0$, and hence for every $m\in \Z$ there is  $f_m\in F=\ker (\pi_g)$ such that $g^mf_m=h^{mk}$. Since $F$ is finite, we can find $m_1$ and $m_2$, $m_1\neq m_2$ such that
$f_{m_1}=f_{m_2}$, and therefore $g^{m_1-m_2}=h^{(m_1-m_2)k}$.
Since $k>1$, $0<|m_1-m_2|<|(m_1-m_2)k|$.

(c)$\Rightarrow$(a).
From the split short exact sequence $1\to F \to E_G^+(g)\stackrel{\pi}{\to} \gen{t} \to 1,$ and $\pi(g)=t$,
we see that $\gen{g}\cdot F=E_G^+(g).$ Since both $\langle g \rangle$ and $F$ are normal in $E_G^+(g)$, and they have trivial intersection, we obtain $E_G^+(g)\cong \gen{g}\times F$.
\end{proof}

We will also need the following result, which is similar to \cite[Lemma 6.2]{ckGAFA}.
\begin{lemma}\label{lem:notprim}
If $g$ is a loxodromic element of $G$ that is not primitive, then there exist an integer $n>1$, a primitive element $g_0$ in $G$ and $r$, $|r|\leq M=M(G)$, such that $g^r=g_0^{rn}$.
\end{lemma}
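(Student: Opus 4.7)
The plan is to exploit Proposition \ref{prop:prim}(c) to translate the ``not primitive'' hypothesis into a numerical statement inside $E_G^+(g)$, and then use finiteness of $F := \ker \pi_g$ together with pigeonhole. First, since $g$ is not primitive, the integer $n$ defined by $\pi_g(g) = t^n$ satisfies $n \geq 2$. Because $\pi_g \colon E_G^+(g) \to \langle t \rangle$ is surjective, I can pick $h \in E_G^+(g)$ with $\pi_g(h) = t$. The first thing to verify is that this $h$ is itself primitive: since $h \in E_G^+(g)$, both $E_G^+(h)$ and $E_G^+(g)$ coincide with the unique maximal virtually cyclic finite-by-(infinite cyclic) subgroup containing them, so $\pi_h$ agrees with $\pi_g$ (the sign is fixed by the positivity convention), whence $\pi_h(h)=t$ and Proposition \ref{prop:prim}(c) applied to $h$ gives primitivity.

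Next, since $\pi_g(g h^{-n}) = 1$, one writes $g = f h^n$ for some $f \in F$. Normality of $F$ in $E_G^+(g)$ makes $h^i f h^{-i} \in F$ for all $i$, so a straightforward induction yields elements $f_i \in F$ with
\[
g^i = f_i h^{in}, \qquad f_0 = 1, \qquad f_{i+1} = f_i \cdot (h^{in} f h^{-in}).
\]
Apply pigeonhole to the sequence $f_0, f_1, \dots, f_M$ inside the set $F$ of cardinality $\leq M$: there exist $0 \leq i_1 < i_2 \leq M$ with $f_{i_1} = f_{i_2} =: f'$. Setting $r := i_2 - i_1$, so $1 \leq r \leq M$, a direct computation gives
\[
g^r = g^{i_2} g^{-i_1} = f' h^{i_2 n} h^{-i_1 n} (f')^{-1} = f' h^{rn} (f')^{-1} = \bigl(f' h (f')^{-1}\bigr)^{rn}.
\]
Define $g_0 := f' h (f')^{-1}$. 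Since primitivity is invariant under conjugation (immediate from the non-power characterization in Proposition \ref{prop:prim}(b)), $g_0$ is primitive, and we obtain $g^r = g_0^{rn}$ with $n \geq 2$ and $|r| \leq M$, as required.

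The main subtle point—and really the only non-routine step—is the identification $\pi_h = \pi_g$ needed to conclude that $h$ is primitive; everything else is bookkeeping in the extension $1 \to F \to E_G^+(g) \to \Z \to 1$ and pigeonhole on $F$. The argument parallels \cite[Lemma 6.2]{ckGAFA} but is cleaner because our definition of primitivity via $\pi_g$ packages the necessary data, and the bound $|r| \leq M$ falls out of the pigeonhole counting with no further work.
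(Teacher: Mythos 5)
Your proof is correct, but it takes a genuinely different route from the paper's. The paper argues by induction on the stable translation length $\tau(g)$ (using $\tau(g^k)=|k|\tau(g)$ and discreteness of translation lengths): non-primitivity gives, via Proposition~\ref{prop:prim}(b), an element $h$ with $g^m=h^n$, $0<|m|<|n|$, hence $\tau(h)<\tau(g)$, and the induction reduces to the case where this root is primitive, where the splitting $E_G^+(h)\cong\langle h\rangle\times F$ and taking $r$ to be the order of the torsion part give $r\le M$. You instead work entirely inside $E_G^+(g)$ using Proposition~\ref{prop:prim}(c): non-primitivity means $\pi_g(g)=t^n$ with $n\ge 2$; you choose $h$ with $\pi_g(h)=t$, identify $E_G^+(h)=E_G^+(g)$ and $\pi_h=\pi_g$ (the only non-routine step, and your justification is sound: $E_G^+(g)$ is itself finite-by-(infinite cyclic) and contains $h$, so maximality from Osin's theorem cited before Proposition~\ref{prop:prim} gives both inclusions, and uniqueness of $\pi_h$ together with the positivity normalization and $\pi_g(h)=t$ forces $\pi_h=\pi_g$; you should also record, in one line, that $h$ is loxodromic, which is immediate since $\pi_g(h)=t$ forces infinite order), so $h$ is primitive; then $g=fh^n$ with $f\in F=\ker\pi_g$, and pigeonhole on $f_0,\dots,f_M\in F$ yields $1\le r\le M$ with $g^r=\bigl(f'h(f')^{-1}\bigr)^{rn}$, and primitivity passes to the conjugate by (b). Your version buys a direct, induction-free argument that avoids translation-length discreteness and even the direct-product structure of $E_G^+(h)$ (you only use normality of $F$), and it makes $n>1$ and the sign conventions automatic; the cost is the identification $E_G^+(h)=E_G^+(g)$, which the paper's route does not need at this point. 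Both arguments deliver exactly the statement of the lemma.
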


\begin{proof}
Let $\tau(g)$ denote the stable translation length of $g$.
Recall that $\tau(g^n)=|n|\tau(g)$ (see \cite[III.$\Gamma$.3.14]{BridsonHaefliger}) and translations lengths
are discrete (see \cite[III.$\Gamma$.3.17]{BridsonHaefliger}) i.e. there is a positive integer $p$ such
that $p\tau(g)\in \mathbb{N}$ for all $g\in G$.

Let $g$ be a non-primitive loxodromic element and assume that the lemma holds 
for non-primitive loxodromic elements of stable translation length smaller than $\tau(g)$. 
By Proposition \ref{prop:prim}, there is $h\in G$ such that $g^m=h^n$, $0<|m|<|n|$. Note that this implies $\tau(h)<\tau(g)$.

If $h$ is primitive, then $g\in E_G^+(h)=\gen{h}\cdot F \cong \gen{h}\times F$, where $F$ is some finite subgroup of $G$.  So
$g=h^k f$, where $f \in F$ and $k\in \Z$, and hence there is $r \leq M$ such that $g^r=h^{rk}$, and the conclusion of the lemma holds.

If $h$ is not primitive, then since $\tau(h)<\tau(g)$, the induction assumption implies that $h^p=g_0^{pk}$ for some $g_0$ primitive, so $g^{mp}=h^{np}=g_0^{npk}$, and hence  $g\in E_G^+(g_0)$.
The lemma follows by repeating the same argument as above. 
\end{proof}
\begin{definition}
For $g$ and $g_0$ as in the statement of the lemma, we say that $g_0$ is a {\it root} of $g$.
\end{definition}

\subsection{The lengths of roots}
Now we make precise the intuition that a root of a non-primitive element $g$ must have
length at most $\frac{|g|_X}{2}$. We will use the BCD property of Definition \ref{def:BCD} several times.

Let $\conj\subconj$ denote the equivalence relation given
by conjugacy, $G/\!\conj\subconj$ denote its set of equivalence classes, and recall that $[g]\subconj$ denotes the conjugacy class of $g \in G$.
Define the {\em length $|g|\subconj$ up to conjugacy} of an element $g$ of $G$ by 
\[|g|\subconj:=\min\{|h|_X \mid h \in [g]\subconj\}.\]
We make the analogous definitions for $\sim\subcomm$, the commensurability relation.

Recall that in a hyperbolic space, local progress guarantees global progress 
(see for example \cite[III.H. Theorem 1.13]{BridsonHaefliger}).
That is, for a fixed $\lambda >1$ there exist $K>1,$  $\e\geq 0$, such that any $K$-local
geodesic in $\ga(G,X)$ is a $(\lambda, \e)$-quasi-geodesic. 

The following lemma, which we need later, can be proved using this local-to-global property.
\begin{lemma}\cite[Lemma 2.2.]{ckIJAC}\label{lem:ckIJAC} Let $G$ be a hyperbolic group. There is a constant $C_0 = C_0(G, X) > 0$ such that for all $g\in G$ with $|g|\subconj\geq C_0$ and for all $n\in \Z-\{0\}$, $|g^n|\subconj \geq \frac 34 |n|\cdot  |g|\subconj$.
\end{lemma}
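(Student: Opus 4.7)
The plan is to exploit the BCD property of the hyperbolic Cayley graph to compare a minimum-length conjugacy representative $V$ of $g^n$ with the ``obvious'' representative $U^n$ obtained by concatenating $n$ copies of a minimum-length conjugacy representative $U$ of $g$. Pick a word $U$ of length $|g|\subconj$ representing a conjugate of $g$. Any cyclic permutation of $U$ represents an element of $[g]\subconj$ and therefore has group length at least $|g|\subconj=\ell(U)$, which forces that cyclic permutation to be a geodesic word; hence $U$ is a cyclic geodesic. The same reasoning shows that any minimum-length representative $V$ of $[g^n]\subconj$ is itself a cyclic geodesic.

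The key step is to show that $U^n$ is a \emph{cyclic} $(\lambda,\e)$-quasi-geodesic for constants of our choosing, provided $|g|\subconj$ is large. Every cyclic permutation of $U^n$ has the shape $\widetilde U^n$ for a cyclic permutation $\widetilde U$ of $U$, which is again a cyclic geodesic. Fix $\lambda\in(1,4/3)$ and let $K$ be the local-to-global constant of \cite[III.H. Thm 1.13]{BridsonHaefliger} such that every $K$-local geodesic is a $(\lambda,\e)$-quasi-geodesic. If $|g|\subconj \geq K$, then any subword of $\widetilde U^n$ of length at most $K$ is contained in two consecutive copies of $\widetilde U$, hence is a suffix of $\widetilde U$ followed by a prefix of $\widetilde U$; but such a concatenation is itself a prefix of some cyclic permutation of $U$, and therefore a geodesic word. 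Thus $\widetilde U^n$ is a $K$-local geodesic, and so a $(\lambda,\e)$-quasi-geodesic.

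Finally, I would apply the $(\lambda,\e)$-BCD property to the conjugate cyclic quasi-geodesics $U^n$ and $V$. For $|g|\subconj>D$ the length of $U^n$ exceeds $D$, so Definition~\ref{def:BCD}(2) produces cyclic permutations $(U^n)'$ and $V'$ together with $k\in\ball_X(D)$ satisfying $(U^n)'=_G k^{-1}V'k$. Since $V'$ is a cyclic permutation of the cyclic geodesic $V$, one has $|V'|_X=|g^n|\subconj$, and combined with $(U^n)'=_G k^{-1}V'k$ this gives $|(U^n)'|_X\leq |g^n|\subconj+2D$. The quasi-geodesic inequality applied to $(U^n)'$, whose word length equals $n|g|\subconj$, then yields
\[
n\,|g|\subconj \;=\; \ell\bigl((U^n)'\bigr) \;\leq\; \lambda\,|(U^n)'|_X + \e \;\leq\; \lambda\bigl(|g^n|\subconj + 2D\bigr) + \e,
\]
so $|g^n|\subconj \geq n|g|\subconj/\lambda - \e/\lambda - 2D$. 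Since $1/\lambda - 3/4 > 0$, choosing $C_0$ large enough to dominate $K$ and $D$ and to absorb the additive slack $\e/\lambda + 2D$ produces $|g^n|\subconj \geq \tfrac{3}{4}\,n\,|g|\subconj$ for all $n\geq 1$; the case $n<0$ follows by applying the positive case to $g^{-1}$, whose conjugacy length equals that of $g$. The main obstacle is the interlocking of constants: the exponent $3/4$ forces $\lambda<4/3$, which dictates $K$ via local-to-global, and $C_0$ must simultaneously exceed $K$ and $D$ while absorbing the additive error coming from the BCD conjugator and the quasi-geodesic slack.
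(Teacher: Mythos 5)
Your proof is correct: the reduction to cyclic geodesics, the observation that powers of long cyclic geodesics are $K$-local geodesics and hence cyclic $(\lambda,\e)$-quasi-geodesics with $\lambda<4/3$, and the application of the $(\lambda,\e)$-BCD property to compare $U^n$ with a minimal representative of $[g^n]\subconj$ all fit together, and the bookkeeping of constants (choosing $C_0$ to exceed $K$, $D$ and to absorb the additive slack, plus the symmetric-generating-set reduction for $n<0$) is sound. The paper itself does not prove this lemma but cites Coornaert--Knieper, and your argument is essentially the standard one, built from exactly the local-to-global and BCD machinery the paper uses elsewhere (Notation \ref{not:qgconstant} and Definition \ref{def:BCD}).
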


\begin{notation} \label{not:qgconstant}
 For the rest of the section we will fix $K$ and $\e_1$ such that
every $K$-local geodesic in $\ga(G,X)$ is a $(3/2,\e_1)$-quasi-geodesic.
Let $W$ label a cyclic geodesic. Then every cyclic permutation of $W$ is a geodesic word,
so if $\ell(W)>K$ the word $W^n$ labels a $K$-local geodesic and hence a $(3/2,\e_1)$-quasi-geodesic for all $n\in \N$.
In other words, all powers of $W$ are {\em cyclic $(3/2,\e_1)$-quasi-geodesics}. 
Note that all $W \in \geocpl(G,X)$ with $\ell(W)>K$ represent loxodromic elements of $G$.
\end{notation} 

Now we give an upper bound for the length of the roots of non-primitive elements.

\begin{lemma} \label{lem:roots} 
 There are constants $K_1$,  $\lambda_2\geq 1$ and $\e_2 \geq 0$ 
 such that
for every non-primitive  conjugacy class $[g_0]\subconj$ with $|g_0|\subconj> K_1$, there is a conjugacy representative  $g\in [g_0]\subconj$ with $|g|_X=|g|\subconj$,  a primitive element $h\in G$, and $k\in G$ such that the following hold:
\begin{enumerate}
\item[{\rm (a)}] $g^r=kh^{nr}k^{-1}$ for some $r,n\in \N$, $|r|\leq M$, $n>1$,
\item[{\rm (b)}] $|k|_X\leq K_1$, 
\item[{\rm (c)}] $|h|_X\leq 3/4|g|_X+K_1$,
\item[{\rm (d)}] $p|h|_X \leq\lambda_2 |h^p|_X +\e_2$ for all $p\in \Z$.
\end{enumerate}
\end{lemma}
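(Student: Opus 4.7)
My plan is, starting from a non-primitive class $[g_0]\subconj$ with $|g_0|\subconj>K_1$, to first invoke Lemma~\ref{lem:notprim} to obtain integers $r,n\in\N$ with $|r|\leq M$ and $n>1$, together with a primitive $\tilde h\in G$ such that $g_0^r=\tilde h^{rn}$. I would then fix arbitrary minimal-length conjugacy representatives $g^*\in[g_0]\subconj$ and $h^*\in[\tilde h]\subconj$, so that $|g^*|_X=|g_0|\subconj$ and $|h^*|_X=|\tilde h|\subconj$; writing $g^*=c^{-1}g_0c$ gives $(g^*)^r=(c^{-1}\tilde h c)^{rn}$, whence $(g^*)^r$ is conjugate to $(h^*)^{rn}$ in $G$.

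The heart of the argument is an application of the BCD property to cyclic geodesic words $U^*$ for $g^*$ and $V^*$ for $h^*$. Provided $|g^*|_X>K$, Notation~\ref{not:qgconstant} makes $(U^*)^r$ a cyclic $(3/2,\e_1)$-quasi-geodesic; the same holds for $(V^*)^{rn}$ when $|h^*|_X>K$, while in the opposite range $|h^*|_X\leq K$ only finitely many loxodromic $h^*$ arise, each with positive stable translation length, so uniform quasi-geodesic constants $(\lambda',\e')$ can be fixed that work in every case. Feeding $(U^*)^r$ and $(V^*)^{rn}$ into BCD produces cyclic permutations $(U')^r$ of $(U^*)^r$ and $(V')^{rn}$ of $(V^*)^{rn}$ together with some $k_0\in\ball_X(D)$ satisfying $(U')^r=_G k_0^{-1}(V')^{rn}k_0$. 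Because cyclic shifts of cyclic geodesics remain geodesic, the elements $g,h$ represented by $U',V'$ have $|g|_X=|g^*|_X$ and $|h|_X=|h^*|_X$, so $g$ is still a minimal-length representative of $[g_0]\subconj$ and $h$ is still a primitive (primitivity is conjugation invariant) minimal-length representative of $[\tilde h]\subconj$. Setting $k:=k_0^{-1}$ delivers the identity (a), and (b) holds as soon as $K_1\geq D$.

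For (c) I would apply Lemma~\ref{lem:ckIJAC} to $h^*$: if $|h^*|\subconj\geq C_0$ then $|(h^*)^{rn}|\subconj\geq \frac{3}{4} rn|h^*|\subconj$, whereas $|(h^*)^{rn}|\subconj=|(g^*)^r|\subconj\leq r|g^*|_X$, giving $|h|_X\leq \frac{4}{3n}|g|_X\leq \frac{2}{3}|g|_X$, which is already stronger than the target $\frac{3}{4}|g|_X+K_1$; the residual case $|h^*|\subconj<C_0$ is absorbed by requiring $K_1\geq C_0$. For (d), when $|h|_X>K$ the word $(V')^p$ is cyclic $(3/2,\e_1)$-quasi-geodesic for every $p\in\Z$ by Notation~\ref{not:qgconstant}, so $p|h|_X=\ell((V')^p)\leq \frac{3}{2}|h^p|_X+\e_1$ yields (d) with $\lambda_2=3/2$, $\e_2=\e_1$; when $|h|_X\leq K$ the same finiteness-plus-discrete-translation-length argument (using \cite[III.$\Gamma$.3.17]{BridsonHaefliger}) supplies uniform constants that can be folded into $\lambda_2,\e_2$. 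Taking $K_1:=\max\{D,K,C_0\}$ (with a small buffer) makes every invocation above valid whenever $|g_0|\subconj>K_1$.

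The main technical obstacle is the persistent corner case $|h^*|_X\leq K$, which simultaneously blocks the direct use of Notation~\ref{not:qgconstant} for $(V^*)^{rn}$ in the BCD step and Lemma~\ref{lem:ckIJAC} for bounding $|h^*|\subconj$ in (c) and (d). In every instance the remedy is the same finiteness observation: $G$ has finitely many elements of any bounded length, so only finitely many loxodromic conjugacy classes of length $\leq K$ occur, and each contributes bounded exceptional constants that can be absorbed into $\lambda',\e',\lambda_2,\e_2,K_1$. Apart from this bookkeeping, the proof is a clean chaining of Lemma~\ref{lem:notprim}, the BCD property of $\ga(G,X)$, and Lemma~\ref{lem:ckIJAC}.
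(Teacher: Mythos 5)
Your proposal is correct and follows essentially the same route as the paper's proof: Lemma \ref{lem:notprim} plus the BCD property applied to powers of conjugacy-minimal cyclic geodesic representatives, with the finiteness argument for short loxodromic words supplying the uniform constants $\lambda_2,\e_2$ (and $K_1$ chosen large enough that the degenerate BCD alternative cannot occur). The only local divergence is part (c), where you invoke Lemma \ref{lem:ckIJAC} instead of the paper's direct quasi-geodesic length comparison extracted from $W^r=_G k^{-1}V^{rn}k$; both work, and your variant even gives the slightly stronger bound $|h|_X\leq \tfrac{2}{3}|g|_X$ outside the bounded exceptional range.
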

\begin{proof}

Let $\cS$ be the set of cyclic geodesic words of length less than $K$ that represent loxodromic elements.
Since $\cS$ is finite, there exist $\lambda_0\geq 1, \e_0 \geq 0$ such that for $W\in \cS$ and any  $n\geq 1$, $W^n$ labels a cyclic $(\lambda_0,\e_0)$-quasi-geodesic. 

Let $\lambda_2=\max\{3/2,\lambda_0\}$ and $\e_2=\max\{\e_0, \e_1\}$. We have that \begin{equation}
\label{eq:lambda2}
\text{ if $U\in \geocpl(G,X)$ is loxodromic, then
$U^n$ is a cyclic $(\lambda_2,\e_2)$-quasi-geodesic.}
\end{equation}

Let $D_2$ be the BCD constant for $(\lambda_2,\e_2)$ (see Definition \ref{def:BCD}).
Set $K_1= \max\{3D_2+\e_1 ,K \}$.

Let $g_0$ be a  non-primitive  element with $|g_0|_X=|g_0|\subconj\geq  K$, and let $W_0\in \geocl(G,X)$ be such that $W_0=_G g_0$. Then by the Notation \ref{not:qgconstant}, $W_0^p$ is a cyclic $(3/2,\e_1)$-quasi-geodesic for all $p\in \Z$ and in particular, $g_0$ is loxodromic.
By Lemma \ref{lem:notprim}, there exist $r$, $|r|\leq M$, $n\geq 1$ and $g_1$ primitive such that $g_0^r= g_1^{rn}$.
Let $V_1$ be a minimal length representative of elements in $[g_1]\subconj$. Then $V_1^{rn} \sim_G g_1^{rn}=_G W_0^r$.

The BCD property (Definition \ref{def:BCD}) implies that there are cyclic permutations $W$ and $V$ of $W_0$ and $V_1$, respectively, and $k\in \ball_X(D_2)$ such that $W^r=_G k^{-1}V^{rn}k$. Let $g=_G W$. Then $g\in [g_0]\subconj$ and  $|g|\subconj=|g|_X$. 
Let  $h=_GV$. Then (a) and (b) are satisfied, and by \eqref{eq:lambda2}, $h$ satisfies (d).  It remains to prove (c).

If $|h|_X<K\leq K_1$ there is nothing to prove. So assume that
$|h|_X>K$. Then both $W^r$ and $V^{rn}$ label $(3/2,\e_1)$-quasi-geodesics. From $W^r=_G k^{-1}V^{rn}k$ we get that 
$$rn \ell(V)=\ell(V^{rn})\leq \frac 32  |h^{rn}|_X+\e_1 \leq \frac 32 (2|k|_X+|W^r|_X)+\e_1.$$
As $|h|_X=\ell(V)$ and $|k|_X\leq D_2$, we obtain
\begin{align*}
|h|_X& \leq \frac{3}{2rn}  (2D_2+ |W^r|_X)+ \frac{\e_1}{rn} \\
& \leq \frac{3}{2rn} |W^r|_X + (3D_2+\e_1)\\
& \leq \frac{3r}{2rn} |W|_X + (3D_2+\e_1).
\end{align*} 
Since $n\geq 2$ and $\ell(W)=|W|_X= |g|_X$, the lemma follows.
\end{proof}

\subsection{The lower bound of Theorem \ref{thm:cgrate}}

\begin{corollary}\label{cor:commareprim}
There exists $K_2>0$ such that for every non-primitive $g_0\in G$ satisfying $|g_0|_X=|g_0|\subcomm$, 
we have that $|g_0|_X<K_2$.
\end{corollary}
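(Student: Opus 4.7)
The plan is to reduce Corollary \ref{cor:commareprim} to Lemma \ref{lem:roots} by observing that any primitive root produced by that lemma automatically lies in the commensurability class of $g_0$, and then comparing its length with $|g_0|\subcomm=|g_0|_X$. First, since conjugacy implies commensurability (take $m=n=1$ in the defining equation), we have $|g_0|\subcomm \leq |g_0|\subconj \leq |g_0|_X$. Combining this with the hypothesis $|g_0|_X=|g_0|\subcomm$ forces equality throughout, so in particular $g_0$ is also of minimal length in its conjugacy class, i.e.\ $|g_0|\subconj=|g_0|_X$.

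If $|g_0|\subconj \leq K_1$, where $K_1$ is the constant of Lemma \ref{lem:roots}, we are already done. Otherwise Lemma \ref{lem:roots} applies and yields a conjugacy representative $g\in[g_0]\subconj$ with $|g|_X=|g|\subconj=|g_0|_X$, a primitive $h\in G$, an element $k\in G$, and positive integers $r,n$ with $n>1$ and $r\leq M$, such that $g^r=kh^{rn}k^{-1}$, together with the crucial length bound $|h|_X\leq \tfrac{3}{4}|g|_X+K_1$. Rewriting the equation as $k^{-1}g^r k=h^{rn}$ with $r,\,rn\neq 0$ is exactly the definition of $g\sim\subcomm h$, and since $g\sim\subconj g_0$ entails $g\sim\subcomm g_0$, we conclude $h\sim\subcomm g_0$.

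Consequently, $|g_0|\subcomm\leq |h|_X$, and combining this with part (c) of Lemma \ref{lem:roots} gives
\[
|g_0|_X \;=\; |g_0|\subcomm \;\leq\; |h|_X \;\leq\; \tfrac{3}{4}|g_0|_X + K_1,
\]
so $|g_0|_X \leq 4K_1$. Taking $K_2:=4K_1+1$ then handles both the case $|g_0|\subconj\leq K_1$ and the case handled by Lemma \ref{lem:roots}, giving $|g_0|_X<K_2$ in either situation.

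I do not anticipate a serious obstacle: the whole argument is a short bookkeeping step layered on top of Lemma \ref{lem:roots}. The only subtle point is checking that the integer exponents $r$ and $rn$ arising from the lemma are nonzero so that the equation $k^{-1}g^rk=h^{rn}$ really witnesses commensurability, but this is immediate from $r\in\mathbb{N}$ and $n>1$. Everything else reduces to the one inequality displayed above.
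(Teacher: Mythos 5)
Your proposal is correct and follows essentially the same route as the paper: reduce to Lemma \ref{lem:roots} via the observation that $|g_0|_X=|g_0|\subcomm$ forces $|g_0|_X=|g_0|\subconj$, note the primitive root $h$ lies in $[g_0]\subcomm$, and combine $|g_0|\subcomm\leq|h|_X$ with the bound $|h|_X\leq\tfrac34|g_0|_X+K_1$ to get $|g_0|_X\leq 4K_1$. Your explicit check that the exponents $r$, $rn$ are nonzero (so the equation really witnesses commensurability) and the adjustment $K_2=4K_1+1$ for strictness are harmless refinements of the same argument.
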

\begin{proof}
Let $K_1$ be the constant of Lemma \ref{lem:roots}. Assume that $g_0$ is not primitive with
$|g_0|=|g_0|\subcomm >K_1$. Note that $|g_0|=|g_0|\subcomm$ implies $|g_0|=|g_0|\subconj$.

By  Lemma \ref{lem:roots}, there exist $g\sim\subconj g_0$, $|g|_X=|g_0|\subconj$, and
a primitive element  $h\in [g]\subconj \subseteq [g_0]\subcomm$ such that
$|h|_X\leq 3/4 |g_0|_X+K_1$.
Since $|g_0|_X=|g_0|\subcomm = |h|\subcomm\leq |h|_X$, we have that $|g_0|_X\leq \frac{3}{4} |g_0|_X+K_1$ and hence
$|g_0|_X\leq 4K_1$. We let $K_2=4K_1$.
\end{proof}
\begin{lemma}\label{lem:2Mprimitive}
For every $g\in G$ loxodromic, there are at most $2M$ primitive conjugacy classes in $[g]\subcomm$.
\end{lemma}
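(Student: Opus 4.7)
The plan is to produce, for every primitive conjugacy class in $[g]\subcomm$, a representative lying in the virtually cyclic subgroup $E_G^+(g)$, and then to count the primitive elements of $E_G^+(g)$ directly via the quotient map $\pi_g\colon E_G^+(g)\to \Z=\gen{t}$, whose kernel $F$ is finite of order at most $M$.

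Suppose $h\in G$ is primitive with $h\sim\subcomm g$; then there exist $k\in G$ and nonzero integers $m,n$ with $k^{-1}h^m k=g^n$. The element $h'\coloneq k^{-1}hk$ is still primitive, is conjugate to $h$, and satisfies $h'^{m}=g^n$. In particular $h'$ commutes with $g^n$, so $h'\in E_G^+(g)$. Hence every primitive conjugacy class in $[g]\subcomm$ admits a representative in $E_G^+(g)$, and it suffices to bound the number of primitive elements of $E_G^+(g)$.

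The key remaining step is to verify that for any loxodromic $h'\in E_G^+(g)$ one has $E_G^+(h')=E_G^+(g)$. Using the characterization of \cite[Theorem 4.3]{Osin} recalled earlier, both $E_G^+(g)$ and $E_G^+(g^p)$ coincide with the unique maximal finite-by-(infinite cyclic) virtually cyclic subgroup of $G$ containing $g$, for any nonzero integer $p$. If $h'\in E_G^+(g)$ commutes with $g^p$ for some $p\neq 0$, then $g^p\in E_G^+(h')$, so $E_G^+(h')\supseteq E_G^+(g^p)=E_G^+(g)$ by maximality; conversely $E_G^+(h')$ is itself a finite-by-(infinite cyclic) virtually cyclic subgroup containing $g$, so $E_G^+(h')\subseteq E_G^+(g)$. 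With this identification, $\pi_{h'}$ and $\pi_g$ are two surjections $E_G^+(g)\to \Z$ sharing the same finite kernel $F$, and therefore differ at most by the sign automorphism of $\Z$. Proposition \ref{prop:prim}(c) then yields that $h'$ is primitive if and only if $\pi_g(h')\in\{t,t^{-1}\}$.

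Finally, each of the fibers $\pi_g^{-1}(t)$ and $\pi_g^{-1}(t^{-1})$ is a coset of $F$, hence contains at most $M$ elements, so $E_G^+(g)$ houses at most $2M$ primitive elements. Combined with the first step, this gives the claimed bound of $2M$ on the number of primitive conjugacy classes in $[g]\subcomm$. The main subtlety is the identification $E_G^+(h')=E_G^+(g)$, which rests on $E_G^+$ being insensitive to taking nonzero powers; once this is in hand, the count reduces to elementary bookkeeping with the short exact sequence $1\to F\to E_G^+(g)\to \Z\to 1$.
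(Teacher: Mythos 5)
Your proof is correct and follows essentially the same route as the paper's: conjugate each primitive representative of $[g]\subcomm$ into the maximal finite-by-(infinite cyclic) virtually cyclic subgroup $E_G^+(\cdot)$, use Proposition \ref{prop:prim} to see that primitive elements must map to $t^{\pm 1}$ under the projection, and count the two fibres, each a coset of the finite kernel $F$ with $|F|\leq M$. One small repair to your identification $E_G^+(h')=E_G^+(g)$: from $g^p\in E_G^+(h')$, maximality of $E_G^+(g^p)$ gives only $E_G^+(h')\subseteq E_G^+(g^p)$, so the inclusion $E_G^+(g)\subseteq E_G^+(h')$ should instead be obtained from maximality of $E_G^+(h')$ applied to the finite-by-(infinite cyclic) subgroup $E_G^+(g)$, which contains $h'$; and the fact that $g\in E_G^+(h')$ (needed for your reverse inclusion) is immediate in your situation because $g$ commutes with $h'^m=g^n$ --- with these one-line adjustments both inclusions hold and the argument is complete.
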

\begin{proof}
Let $h\in [g]\subcomm$ and assume that $|h|_X=|h|\subcomm$.
Suppose that $h_1,\dots, h_p\in [g]\subcomm$ are primitive elements and suppose that $\{h,h_1,\dots, h_p\}$ are pairwise non-conjugate and $p\geq 2M$.
Then there are $n_i,m_i\in \Z$ and $k_i\in G$ such that
$h^{n_i}=k_i^{-1} h_i^{m_i}k_i$ for $i=1,2,\dots, p$. 
In particular 
$$(k_i^{-1}h_ik_i)^{-1}h^{n_i}(k_i^{-1}h_ik_i) = k_i^{-1}h_i^{m_i}k_i=h^{n_i}$$
and therefore, $k_i^{-1}h_ik_i\in E_G^+(h)$ for $i=1,\dots, p$.
Since all are primitive elements, by Proposition \ref{prop:prim}~(b),
$|n_i|=|m_i|$ and hence 
$\pi_h(k_ih_ik_i^{-1})\in  t^{\pm 1}$ for $i=1,2,\dots, p$.
Since $|\pi^{-1}_h(t^{\pm{1}})|\leq 2M$, we have
that there are $i,j$ such that
$k_ih_ik_i^{-1}= k_jh_jk_j^{-1}$, contradicting the fact that $\{h,h_1,\dots, h_p\}$ are pairwise non-conjugate.
\end{proof}

\begin{proof}[Proof of Theorem \ref{thm:cgrate} (lower bound)] 
By Corollary \ref{cor:commareprim}, if $|g|\subcomm>K_2$, then all minimal length representatives of $[g]\subcomm$ are primitive; by Lemma \ref{lem:2Mprimitive}, there are at most $2M$ primitive elements in a commensurating class of a loxodromic element. Thus $$\dfrac{\phi\subpconj(n)}{2M}\leq \phi{\subcomm}(n)\leq \phi\subpconj(n)$$
 for all $n$ greater than  $K_2$, the constant of Corollary \ref{cor:commareprim}. 
 Combining this with Theorem \ref{prim_bounds}, and observing that $\phi\subpconj(n)\leq \phi\subconj(n)$, we obtain the lower bounds of Theorem \ref{thm:cgrate}.
\end{proof}

\subsection{The upper bound of Theorem \ref{thm:cgrate}}
Since $G$ is a non-elementary hyperbolic group, there are positive constants $A_0$ and $B_0$ (see \cite{c93}) such that for all $n\geq n_0$
\begin{equation}\label{eq:growthbounds}
A_0e^{\grate n}\leq |\ball_X(n)| \leq B_0e^{\grate n} .
\end{equation}

\begin{lemma}\label{lem:negli}
There are $B_1, n_0>0$ such that for all $n>n_1$
$$\phi\subconj(n)-\phi\subpconj(n)\leq B_1n^2e^{\frac{3\grate n}{4}}.$$
\end{lemma}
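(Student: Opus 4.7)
The plan is to bound the number of non-primitive conjugacy classes of length at most $n$, since $\phi\subconj(n) - \phi\subpconj(n)$ counts exactly these. Because a primitive element is by definition of infinite order, the non-primitive classes split into torsion classes and non-primitive loxodromic classes. Torsion classes contribute only a bounded amount (all finite subgroups of $G$ have order at most $M$, and up to conjugacy there are finitely many of them), so the interesting task is to bound non-primitive loxodromic classes.

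For each non-primitive loxodromic class $[g]\subconj$ with $K_1 < |g|\subconj \leq n$, I would pick a minimal-length representative $g \in [g]\subconj$ and apply Lemma \ref{lem:roots} to obtain a primitive $h \in G$, an element $k$ with $|k|_X \leq K_1$, and integers $r,m$ with $|r|\leq M$ and $m>1$ such that $g^r = k h^{rm} k^{-1}$. Taking $k$-conjugates gives $(k^{-1}gk)^r = h^{rm}$, which forces $k^{-1}gk \in E_G^+(h) = \langle h \rangle \cdot F$ with $F$ finite of order at most $M$ (using Proposition \ref{prop:prim}(c)). Hence there exist $p \in \Z$ and $f \in F$ with $k^{-1}gk = h^p f$, and the association $[g]\subconj \mapsto (h, k, p, f)$ is injective because the tuple recovers $g = k h^p f k^{-1}$.

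The next step is to count how many tuples $(h,k,p,f)$ arise. By Lemma \ref{lem:roots}(c), $h$ lies in $\ball_X(3n/4 + K_1)$, giving at most $|\ball_X(3n/4+K_1)| = O(e^{3\grate n/4})$ choices by \eqref{eq:growthbounds}. The elements $k \in \ball_X(K_1)$ and $f \in F$ each contribute only a bounded number of choices. For the exponent $p$, I would use Lemma \ref{lem:roots}(d): $|p|\cdot |h|_X \leq \lambda_2 |h^p|_X + \e_2$, combined with $|h^p|_X \leq |h^p f|_X + |f|_X = |k^{-1}gk|_X + |f|_X \leq n + 2K_1 + M$, so that $|p| = O(n)$. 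Multiplying these counts gives $O(n \cdot e^{3\grate n/4})$, comfortably inside $B_1 n^2 e^{3\grate n/4}$ for a suitable $B_1$. The short classes with $|g|\subconj \leq K_1$ together with the torsion classes contribute only a bounded constant, absorbed into $B_1$.

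The main obstacle will be setting up the injection cleanly, specifically extracting the pair $(p,f)$ via the product decomposition $E_G^+(h) = \langle h \rangle \cdot F$ from the relation $g^r = k h^{rm} k^{-1}$ supplied by Lemma \ref{lem:roots}. Once this parametrization is fixed, the remaining counting is routine, using only the ball-growth bounds \eqref{eq:growthbounds} and the quasi-geodesic inequality for powers of $h$ from Lemma \ref{lem:roots}(d).
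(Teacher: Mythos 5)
Your strategy is essentially the paper's (map each non-primitive class of length in $(K_1,n]$ to its primitive root $h\in\ball_X(3n/4+K_1)$ supplied by Lemma \ref{lem:roots}, then bound the multiplicity by conjugators times admissible powers), and your tuple $(h,k,p,f)$ even makes the injectivity cleaner than the paper's fiber count. However, one step fails as written: in the estimate $|h^p|_X \le |h^pf|_X + |f|_X \le n + 2K_1 + M$ you silently use $|f|_X\le M$. The constant $M$ bounds the \emph{order} of the finite subgroup $F\le E_G^+(h)$, not the word length of its elements: in a hyperbolic group with torsion, elements lying in finite subgroups of bounded order can have arbitrarily large $X$-length (conjugates $w\sigma w^{-1}$ of a torsion element already show this), and nothing you have quoted from Lemma \ref{lem:roots} or Proposition \ref{prop:prim} bounds the particular $f$ you produce. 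Without some bound on $|f|_X$ your conclusion $|p|=O(n)$, and hence the multiplicity count, is not justified. (One can in fact prove a uniform bound on $|f|_X$ using that $f$ has finite order and coarsely preserves the quasi-axis of $h$ through $1$, but that is an extra argument, and the bound is not $M$.)

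The gap is easy to repair without touching your parametrization. Since $E_G^+(h)\cong\gen{h}\times F$ and $|F|\le M$, either raise $k^{-1}gk=h^pf$ to the power $M!$ to kill $f$: then $(k^{-1}gk)^{M!}=h^{pM!}$, so $|h^{pM!}|_X\le M!\,(n+2K_1)$ and Lemma \ref{lem:roots}(d) (with $|h|_X\ge 1$) gives $|p|=O(n)$; or compare $(h^pf)^r=h^{pr}f^r$ with the relation $g^r=kh^{rm}k^{-1}$ of Lemma \ref{lem:roots}(a), which by uniqueness of the decomposition in $\gen{h}\times F$ forces $p=m$ and then $|h^{rm}|_X\le Mn+2K_1$ bounds $p$ linearly in $n$. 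The paper itself avoids $f$ altogether by bounding the possible powers of $h$ directly from (a), settling for an $O(n^2)$ multiplicity, which still fits under $B_1n^2e^{3\grate n/4}$; with either repair your version goes through and in fact gives the slightly sharper bound $O(n\,e^{3\grate n/4})$. Your treatment of the torsion and short classes as a bounded contribution is fine.
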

\begin{proof}
Let $K_1$ be the constant of Lemma \ref{lem:roots}, and assume $n > K_1$.

Since $$\phi\subconj(n)-\phi\subpconj(n)= (\phi\subconj(n)-\phi\subconj(K_1))-(\phi\subpconj(n)-\phi\subpconj(K_1))+(\phi\subconj(K_1)-\phi\subpconj(K_1))$$
and $\phi\subconj(K_1)-\phi\subpconj(K_1)$ is constant, it is enough to show that 
$$(\phi\subconj(n)-\phi\subconj(K_1))-(\phi\subpconj(n)-\phi\subpconj(K_1))\leq   B_1n^2e^{\frac{3\grate n}{4}}. $$

To show this,  we will construct a map  
$$\alpha_n\colon \big\{[g]\subconj\in G/\sim\subconj \;\mid \; g \text{ non-primitive and } K_1\leq |g|\subconj\leq n  \big\} \to \ball_X(3n/4+K_1)$$
and  a constant $\beta$ so that $\alpha_n$ is (at most $\beta n^2$)-to-one, that is, $|\alpha_n(h)^{-1}|\leq \beta n^2$ for every $h\in \text{Im}(\alpha_n)\subseteq \ball_X(3n/4+K_1)$.

Let $g_0\in G$, $K_1\leq |g_0|\subconj \leq n$. 
By Lemma \ref{lem:roots}, there are $g\in [g_0]\subconj$,
and $h,k\in G$ such that $|g|_X=|g_0|\subconj$ and (a), (b), (c) and (d) 
of Lemma \ref{lem:roots} hold. Define $\alpha_n([g]\subconj)=h$. 

We need to bound $|\alpha_n^{-1}(h)|$.
For each conjugacy class $[g]\subconj$ meeting   $\alpha_{n}^{-1}(h)$ we can assume that the representative $g\in [g]\subconj$ satisfies that $K_1 \leq |g|_X \leq n,$ and for some $r<M$, $g^r$ is conjugate to $h^{pr}$
by an element of length at most $K_1$. Note that in this case $|h^{pr}|_X\leq |g^r|_X + 2K_1\leq  n M+2K_1$.
Therefore 
$$|\alpha_n^{-1}(h)|\leq |\ball_X(K_1)|\cdot |\{ m \in \Z \;|\; |h^m|_X \leq n M+ 2K_1\}|,$$
as the first term in the product bounds the number of possible conjugators and the second one the number of possible powers of $h$.
Note that by Lemma \ref{lem:notprim}(d), and since $|g|_X\geq K_1$, there
exist $\lambda_2\geq 1$ and $\e_2\geq 0$ so that $|m|\leq |g|_X\leq \lambda_2 |h^m|_X +\e_2$.
Thus
\begin{align*}
|\alpha_n^{-1}(h)|&\leq |\ball_X(K_1)|\cdot |\{ m \in \Z \;|\; \frac{|m|}{\lambda_2 n}-c_2  \leq n M+ 2K_1\}|\\
&\leq |\ball_X(K_1)|\cdot|\{ m \in \Z \;|\; |m| \leq \lambda_2 n(n M+ 2K_1+c_2)\}|.
\end{align*}

Now by letting $\beta = 2|\ball_X(K_1)| \cdot \lambda_2 (M+2K_1+c_2)$, we obtain that the map $\alpha_n$ is an (at most $\beta n^2$)-to-$1$ map.
Let $B_1=\beta B_0 e^{\grate K_1}$, where $B_0$ is the constant of \eqref{eq:growthbounds}. 
The lemma then immediately follows since $|\ball_X(3/4n+K_1)|\leq B_0 e^{3\grate n/4} e^{\grate K_1}$.
\end{proof}

The next proof follows the same strategy as \cite{ckIJAC}, where the torsion-free case is proved.

\begin{proof}[Proof of the upper bound of Theorem \ref{thm:cgrate}]

Let $M=M(G)$ be as in \eqref{eq:finite}, let $W\equiv x_1x_2\cdots x_n$ be a cyclic geodesic representing a primitive element, and for $i=1,2,\dots, n$ consider the cyclic permutations $W_i\equiv x_i \cdots x_nx_1\cdots x_{i-1}$ of $W$.
For a fixed $i\in \{1,\dots, n\}$ we are going to show that $|\{j\mid W_j=_G W_i\}|\leq M$.

Suppose that $W_i=_G W_j$, 
$i\neq j$.
Let $U\equiv x_i\cdots x_{j-1}$ and $V\equiv x_{j} \cdots x_{i-1}$,
where the indices are considered cyclically. 
Then $UV\equiv W_i$ and $VU\equiv W_j$. 
Let $g=_G W_i$, $h=_G U$ and $k=_G V$.
Then $h,k\in C_G(g)$, the centralizer of $g$ in $G$. 
Since $g$ is primitive and $C_G(g)\leqslant E_G^+(g)$, we can assume that $\pi_g(g)=t$ and  $C_G(g)\cong\gen{g}\times F$, where $F$ is some finite subgroup.
Then  $h=g^pf_1$, $k=g^qf_2$, with $f_i\in F$ and $p,q\in \Z$.
Since $hk=g$, $\pi_g(hk)=t^{p+q}$, and thus $p+q=1$.

By Lemma \ref{lem:ckIJAC}, there is a constant $K_0$ such that
for every $a\in G$, if $|a|\subconj\geq K_0$ then $|a^n|\subconj\geq  \frac 34 |n|\cdot |a|\subconj$. Since $h=g^pf_1$, we have that
for some $s$, $h^s=g^{ps}$.
Thus, if $\ell(W)=|g|=|g|\subconj\geq K_0$,  $|h^s|\subconj=|g^{ps}|\subconj \geq \frac 34 |p| \cdot |s| \cdot  \ell(W).$
Since $|h|<\ell(W)$, we have that $|h^s|\subconj\leq |s| |h|< |s| \cdot \ell(W)$, so 
$$\frac 34  |p| \cdot |s| \cdot  \ell(W) < |s| \cdot \ell(W), $$
and therefore $|p|\leq 1$.
A similar argument gives $|q|\leq 1$.

Thus $\max\{|p|,|q|\}=1$ and $p+q=1$, so either $p$ or $q$ is equal to $0$,
and $h$ or $k$ belong to $F$. Recall that $i$ is fixed and $W$ is a cyclic geodesic,
hence for every value of $j$ we get a different group element. Since $|F|\leq M$, we have $|\{j\mid W_j=_G W_i\}|\leq M$ as desired.

Hence, we have proved that for all primitive element $g$ with $|g|\subpconj = |g|_X\geq K_0$, there exist
at least $\lfloor\frac{|g|_X}{M}\rfloor$ distinct elements
in the conjugacy class $[g]\subconj$. Let $\sigma\subpconj(n)$ denote the number of primitive conjugacy classes of length exactly $n$.
We have that 
$$\left\lfloor\frac{n}{M}\right\rfloor \sigma\subpconj(n) \leq B_0 e^{\grate n},$$ 
where $B_0$ is the constant of \eqref{eq:growthbounds}.
Therefore, we have shown that there are $n_1$ and $D$ such that
for all $n\geq n_1$, $\sigma_{\cL\subpconj}(n) \leq D \frac{e^{\grate n}}{n}$.
By a standard argument (see  \cite[Lemma 3.2]{ckIJAC}), it follows that $\phi_{\cL\subpconj}(n)=\sum_{i=0}^n \sigma_{\cL\subpconj}(i)$ satisfies that
there are $B'>0$ and $n_0'$ such that for all $n\geq n_0'$
$$\phi\subpconj(n) \leq B' \frac{e^{\grate n}}{n}.$$ 
By Lemma \ref{lem:negli} for $n\geq \max\{n_1,n_0'\}$
$$\phi\subconj(n)= \phi\subpconj(n)+\big(\phi\subconj(n)-\phi\subpconj(n)\big)\leq B' \frac{e^{\grate n}}{n}+ B_1n^2e^{\frac{3\grate n}{4}}.$$
Since $G$ is non-elementary, $\grate >1$, and there is an $n_2>0$ such that for $n>n_2$, $\frac{1}{e^{\grate n/4}} \leq \frac{1}{n^3}$, so $B_1 e^{3\grate n/4}\leq B_1 \dfrac{e^{\grate n}}{n}$. The upper bound follows for $B=B_1+B'$.
\end{proof}


\section{Determining the language complexity of \texorpdfstring{$\cL\subconj$}{Lconj}, \texorpdfstring{$\cL\subpconj$}{Lprim} and \texorpdfstring{$\cL\subcomm$}{Lcomm}}\label{sec:languages}

In this section we  give sufficient conditions which, when satisfied by a group, imply that
a language of minimal length conjugacy/primitive conjugacy/commensurability representatives cannot be unambiguous context-free, which in turn implies non-regular.

\subsection{Languages and operations on languages}

All alphabets considered here are finite. We start by defining a finite state automaton, which informally can be viewed as a finite labelled graph with edge labels from a given alphabet, where the vertices are called \emph{states}, and among them one is distinguished as the \emph{start state}.
\begin{definition}
 
A {\it finite state automaton} over $A$ is a quintuple $(Q, A, \delta, q_0, F)$, where $Q$ is the finite set of \textit{states}, $\delta \colon Q\times A\to Q$ the \textit{transition function}, $q_0$ the \textit{initial} or \textit{start} state, and $F \subseteq Q$ the set of \textit{final} of \textit{accepting} states.
A word $W\equiv a_1\dots a_n$ is accepted by the finite state automaton if
$\delta(\dots (\delta(\delta(q_0,a_1),a_2)\dots,a_n)\in F$.

A language over  $A$ is {\it regular} if it consists of the words accepted by a finite state automaton.
\end{definition}

The proofs of the statements  (1)--(3) in the following lemma can be found in \cite{hu}.
\begin{lemma}\label{lem:regularfacts}
Let $A$ and $B$ be two alphabets and $\cL$ and $\cM$ be regular languages over $A$.
\begin{enumerate}
\item[{\rm (1)}] $\cL\cup \cM$, $\cL \cap  \cM$ and $\cM-\cL$ are regular,
\item[{\rm (2)}] if $\phi\colon A^*\to B^*$ is a monoid morphism, then $\phi(\cL)$ is regular,
\item[{\rm (3)}] if $\phi \colon B^*\to A^*$ is a monoid morphism, then $\phi^{-1}(\cL)$ is regular.
\end{enumerate}
\end{lemma}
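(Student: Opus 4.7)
The plan is to establish each closure property by explicit finite state automaton constructions, starting from a deterministic finite state automaton (DFA) recognizing each of the given regular languages. Throughout, I would let $M_{\cL} = (Q_\cL, A, \delta_\cL, q_0^\cL, F_\cL)$ and $M_{\cM} = (Q_\cM, A, \delta_\cM, q_0^\cM, F_\cM)$ be DFAs over $A$ recognizing $\cL$ and $\cM$, respectively. I would freely use the standard fact (proved via subset construction) that nondeterministic finite automata recognize exactly the regular languages, so it is harmless to output NFAs in some of the constructions.

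For (1), I would handle union and intersection simultaneously with the product construction on state set $Q_\cL \times Q_\cM$, with transition $\delta((p,q), a) = (\delta_\cL(p,a), \delta_\cM(q,a))$ and start state $(q_0^\cL, q_0^\cM)$. Declaring the accepting set to be $F_\cL \times Q_\cM \cup Q_\cL \times F_\cM$ recognizes $\cL \cup \cM$, while $F_\cL \times F_\cM$ recognizes $\cL \cap \cM$. For the complement $A^* \setminus \cL$, I would simply swap the accepting and non-accepting states of $M_\cL$ (this is where it matters that $M_\cL$ is deterministic and total, which can be arranged by adding a sink state). Then $\cM \setminus \cL = \cM \cap (A^* \setminus \cL)$ is regular by the intersection case.

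For (2), I would construct an NFA with $\varepsilon$-transitions recognizing $\phi(\cL)$. Starting from $M_\cL$, replace each transition $p \xrightarrow{a} q$ by a path $p \to r_1 \to r_2 \to \cdots \to q$ whose consecutive edge labels spell out $\phi(a) \in B^*$, introducing fresh intermediate states (if $\phi(a)$ is the empty word, replace the edge by an $\varepsilon$-transition from $p$ to $q$). A computation of this new automaton on a word $W \in B^*$ exists exactly when $W = \phi(a_1 \cdots a_n)$ for some $a_1 \cdots a_n$ accepted by $M_\cL$, so the accepted language is precisely $\phi(\cL)$. For (3), I would keep the same state set, start state, and accepting set as $M_\cL$, and define a new transition function $\delta' \colon Q_\cL \times B \to Q_\cL$ by $\delta'(p, b) = \hat{\delta}_\cL(p, \phi(b))$, where $\hat{\delta}_\cL$ is the natural extension of $\delta_\cL$ to words. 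A straightforward induction on $\ell(W)$ then gives $\hat{\delta}'(q_0^\cL, W) = \hat{\delta}_\cL(q_0^\cL, \phi(W))$, so this DFA accepts $W$ iff $\phi(W) \in \cL$, that is, iff $W \in \phi^{-1}(\cL)$.

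No step here is a serious obstacle; the only mildly subtle point is the complementation used in the difference case, which requires the ambient DFA to be total and deterministic, and the $\varepsilon$-transition gadgetry in (2), which needs the standard $\varepsilon$-removal procedure to turn the resulting $\varepsilon$-NFA into a DFA. Since these are all classical constructions, I would just cite \cite{hu} for the detailed verifications rather than recompute them.
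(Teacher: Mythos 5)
Your constructions are correct and are exactly the classical closure-property arguments (product automaton, complementation of a total DFA, edge-substitution for homomorphic image, transition composition for preimage) that the paper handles by simply citing \cite{hu}, which is also where you defer the routine verifications. So your proposal matches the paper's approach, just with the standard details written out.
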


A \emph{context-free language} is recognized by a finite state automaton with additional memory, or by a so-called \emph{context-free grammar}. If this grammar produces each word in a unique manner we say that the language is \emph{unambiguous context-free}.
Recall that by the Chomsky-Sch\"utzemberger Theorem and the discussion in Section \ref{sec:series}, if $\cL$ is unambiguous context-free, then
$\sum_{W\in \cL} z^{\ell(W)}$ is algebraic. In \cite{Flajolet} Flajolet gave many examples of context-free languages with transcendental growth series, therefore the adjective `unambiguous' is necessary in the statement of Theorem \ref{thm:AHregular}. The following corollary and remark record the fact, applied to the languages studied in this paper, that if the growth series of a languages is transcendental, then the language cannot be unambiguous context-free.

\begin{corollary}\label{cor:notUCF}
Let $H$ be a non-elementary hyperbolic group with finite symmetric generating set $Y$.
Let $\cL \subseteq Y^*$ be a language satisfying one of the following properties:
\begin{enumerate}
\item[{\rm (1)}] $\cL$ contains exactly one minimal length representative of each $H$-conjugacy class.
\item[{\rm (2)}] $\cL$ contains exactly one minimal length representative of each $H$-conjugacy class of primitive elements.
\item[{\rm (3)}] $\cL$ contains exactly one minimal length representative of each $H$-commensurability class of primitive elements.
\item[{\rm (4)}] $\cL$ contains exactly one minimal length primitive element representative in each $H$-commensurability class.
\end{enumerate}
Then $\cL$ is not unambiguous context-free.
\end{corollary}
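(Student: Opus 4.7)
The plan is to argue by contradiction via Theorem \ref{thm:ChomskyS}: if $\cL$ were unambiguous context-free, then its cumulative growth series $\cG_\cL(z)$ would be algebraic over $\Q(z)$. In each of the four cases I will pin down the asymptotics of $\phi_\cL(n)$ as $A' e^{n\grate}/n \leq \phi_\cL(n) \leq B' e^{n\grate}/n$ for $n$ sufficiently large, and then repeat the closing argument in the proof of Theorem \ref{thm:rivinconj}: Lemma \ref{lem:algebraic_coefficients} would force $\lambda = e^{\grate}$ and exponent $p = -1$, which the lemma explicitly excludes. Thus $\cG_\cL$ must be transcendental, contradicting the assumption, and $\cL$ cannot be unambiguous context-free.

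Cases (1) and (2) are immediate because $\phi_\cL(n)$ literally equals $\phi\subconj(n)$ or $\phi\subpconj(n)$, which satisfy the required asymptotic by Theorem \ref{thm:cgrate}; in fact one can simply quote Theorem \ref{thm:rivinconj}. For cases (3) and (4), my goal is to show $\phi_\cL(n) = \phi\subcomm(n) + O(1)$ for all large $n$. The key input is Corollary \ref{cor:commareprim}, which guarantees that for any commensurability class with minimum length exceeding the constant $K_2$, every minimum length representative is automatically primitive. In case (3), any commensurability class with minimum length in $(K_2, n]$ contains a primitive (namely its minimal representative) and therefore contributes equally to $\phi\subcomm(n)$ and to $\phi_\cL(n)$, the discrepancy being absorbed into the finite quantity $\phi\subcomm(K_2)$. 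In case (4), the same corollary implies that for such classes the minimum length primitive realizes the class minimum, so once again $\phi_\cL(n)$ and $\phi\subcomm(n)$ agree up to a bounded error. Combining with the bound $\phi\subcomm(n) \asymp e^{n\grate}/n$ from Theorem \ref{thm:cgrate} yields the required two-sided estimate for $\phi_\cL(n)$.

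The main obstacle is the combinatorial bookkeeping in cases (3) and (4), where one has to track the distinction between arbitrary commensurability classes, commensurability classes of primitives, minimum length across a class, and minimum length among primitives of a class. Corollary \ref{cor:commareprim} collapses all of these distinctions outside a bounded window, so the accounting reduces to an additive constant and no fundamentally new ingredient beyond what Section~4 already provides is required. Equivalently, one can formulate the exact analogue of Remark \ref{rem:quasicommgrowth} with $\phi\subpconj$ replaced by the asymptotically equivalent $\phi\subcomm$ and directly deduce the transcendence of $\cG_\cL$ from the comparison $\phi_\cL \asymp \phi\subcomm$.
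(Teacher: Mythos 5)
Your proposal is correct, and for cases (1)--(3) it is essentially the paper's argument: the paper proves the corollary in one line by noting that in cases (1)--(3) the series is transcendental by Theorem \ref{thm:rivinconj}, and then invoking Chomsky--Sch\"utzenberger, exactly as you do. The only genuine divergence is case (4). The paper handles it by comparing $\phi_\cL$ \emph{multiplicatively} with the primitive conjugacy growth: Lemma \ref{lem:2Mprimitive} gives at most $2M$ primitive conjugacy classes per loxodromic commensurability class, whence $\phi_\cL(n)\leq \phi\subpconj(n)\leq 2M\,\phi_\cL(n)$, and Remark \ref{rem:quasicommgrowth} then yields transcendence. You instead compare $\phi_\cL$ \emph{additively} with $\phi\subcomm$, using Corollary \ref{cor:commareprim} to see that once $|g|\subcomm>K_2$ every minimal length element of the class is primitive, so the minimal primitive length equals the class minimum and $\phi_\cL(n)=\phi\subcomm(n)+O(1)$ for large $n$; the closing step of the proof of Theorem \ref{thm:rivinconj} (which only needs the two-sided $e^{n\grate}/n$ bounds of Theorem \ref{thm:cgrate}, stable under bounded additive perturbation) then applies. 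Both reductions are valid and use only material already established in Section 4; your route avoids Lemma \ref{lem:2Mprimitive} and gives the slightly sharper statement that in case (4) the counting function agrees with $\phi\subcomm$ up to a bounded error, while the paper's route works directly from the $2M$-to-one correspondence without needing to locate where the primitive minima sit.
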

\begin{proof}
In each case $\sum_{n\geq 0}\phi_\cL(n)$ is transcendental: (1), (2) and (3) follow from Theorem \ref{thm:rivinconj} and (4) follows from Remark \ref{rem:quasicommgrowth} in view of Lemma \ref{lem:2Mprimitive}. Then, by the Chomsky-Sch\"utzemberger Theorem, $\cL$ is not unambiguous context-free.
\end{proof}

The only facts about unambiguous context-free languages used in the proofs are collected in the following lemma. The first statement follows from  \cite{GinsburgUllian1}. Statements (2) and (3) follow from \cite{GinsburgUllian2}, recalling that monoid morphisms are
special cases of gsm (generalized sequential machine) mappings. The last statement follows from (1). 
\begin{lemma}\label{lem:CFfacts}
Let $\cL$ be an unambiguous context-free language over $A$. 
\begin{enumerate}
\item[{\rm (1)}] If $\cM$ is a regular language over $A$, then $\cL\cap \cM$ and $\cL \cup \cM$ are unambiguous context-free.
\item[{\rm (2)}] If $\phi\colon A^*\to B^*$ is a monoid morphism and $\phi$ restricted to $\cL$ is injective, then $\phi(\cL)$ is 
unambiguous context-free.
\item[{\rm (3)}]  If $\phi\colon  B^*\to A^*$ is a monoid morphism, then $\phi^{-1}(\cL)$ is unambiguous context-free.
\item[{\rm (4)}] If $\cM$ is a language over $A$ that has finite symmetric difference with $\cL$, 
then $\cM$ is unambiguous context-free.
\end{enumerate}
\end{lemma}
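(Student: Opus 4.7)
The plan is to verify each of the four items by the standard pushdown-automaton (PDA) and grammar constructions, together with the observation that unambiguity is preserved whenever the construction sets up a bijection between accepting computations of the new machine and those of the given one.

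First I would handle item (1). Starting from an unambiguous PDA $P$ recognizing $\cL$ and a deterministic finite state automaton $\mathcal{A}$ recognizing $\cM$, form the product PDA whose state set is $Q_P\times Q_{\mathcal{A}}$ and whose transitions simulate $P$ on its stack while advancing $\mathcal{A}$ synchronously on every input symbol. Declaring the final states to be $F_P\times F_{\mathcal{A}}$ recognizes $\cL\cap\cM$, while adapting acceptance to require acceptance of either component (and forcing $\mathcal{A}$ to keep reading to the end of the input) yields $\cL\cup \cM$. Because $\mathcal{A}$ is deterministic, every accepting computation of the product projects to a unique accepting computation of $P$; since $P$ was unambiguous, so is the product.

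For item (3), from a PDA $P$ for $\cL$ over $A$ build a PDA $P'$ over $B$ which, upon reading a letter $b\in B$, enters a finite chain of $\varepsilon$-moves that simulate $P$ on the fixed string $\phi(b)\in A^*$; on input $w\in B^*$ the machine accepts exactly when $P$ accepts $\phi(w)$. Unambiguity of $P$ transfers to $P'$ because computations of $P'$ on $w$ correspond bijectively to computations of $P$ on $\phi(w)$. For item (2) I would use the grammar viewpoint: from an unambiguous context-free grammar $\cG$ for $\cL$, replace each terminal $a$ in every production by the string $\phi(a)$, obtaining a grammar $\cG'$ for $\phi(\cL)$. Two distinct derivations of a word $v$ in $\cG'$ would yield derivations of (possibly distinct) words $u_1,u_2\in \cL$ in $\cG$ with $\phi(u_1)=v=\phi(u_2)$; injectivity of $\phi|_\cL$ forces $u_1=u_2$, so both are derivations of the same word in $\cG$, contradicting unambiguity of $\cG$. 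Hence $\cG'$ is unambiguous. For item (4), write $\cM=(\cL\setminus F_1)\cup F_2$ with $F_1=\cL\setminus \cM$ and $F_2=\cM\setminus \cL$ both finite, hence regular. Then $\cL\setminus F_1=\cL\cap (A^*\setminus F_1)$ is unambiguous context-free by item (1), and the disjoint union with the regular set $F_2$ is unambiguous context-free again by item (1).

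The main obstacle is item (2): monoid morphisms do not in general preserve unambiguity, and the injectivity hypothesis on $\phi|_\cL$ is essential. One must argue carefully that the grammar substitution does not create spurious parses in the image language, and the cleanest formulation is the one Ginsburg--Ullian give in terms of gsm mappings that are injective on the source language; the grammar substitution above is a special case suited to monoid morphisms, which are themselves a particularly simple kind of gsm.
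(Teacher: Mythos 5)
The paper gives no proof of this lemma at all: it simply cites Ginsburg--Ullian for (1)--(3), viewing monoid morphisms as special gsm mappings, and deduces (4) from (1). So your self-contained constructions are a genuinely different route, and items (2), (3) and (4) are argued correctly: the grammar substitution for (2) (injectivity of $\phi|_{\cL}$ pulls two distinct parse trees of a word of $\phi(\cL)$ back to two distinct parse trees of one and the same word of $\cL$), the buffered simulation of $P$ on $\phi(b)$ for (3), and the decomposition $\cM=(\cL\setminus F_1)\cup F_2$ for (4) are all sound, modulo the routine equivalence between unambiguous grammars and PDAs with at most one accepting computation per input, which your PDA arguments tacitly use.

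The one step that would fail is the union half of item (1). In the product of an unambiguous PDA $P$ for $\cL$ with a DFA for $\cM$, if acceptance means ``either component accepts,'' then your key claim---that every accepting computation of the product projects to an accepting computation of $P$---is false: for $w\in\cM$ the DFA component accepts on its own, so \emph{every} complete computation of $P$ on $w$, accepting or not, becomes an accepting computation of the product. Unambiguity of $P$ bounds only the number of accepting computations, not the number of computations, so the product is in general ambiguous, and this cannot be repaired by making $P$ deterministic, since not every unambiguous context-free language is deterministic. The standard fix is to disjointify first: write $\cL\cup\cM=\bigl(\cL\cap(A^*\setminus\cM)\bigr)\cup\cM$, note that $\cL\cap(A^*\setminus\cM)$ is unambiguous context-free by your intersection argument, and then take the union of an unambiguous grammar for it with an unambiguous (say left-linear) grammar for $\cM$ under a fresh start symbol; since the two languages are disjoint, no word acquires two parses. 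With this replacement your item (4), which only needs a union with the finite set $F_2$ disjoint from $\cL\setminus F_1$, goes through verbatim.
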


The strategy we use is as follows: suppose that $G$ is finitely generated by $X$, $\cL_c$ 
is a language of conjugacy/primitive conjugacy/commensurability representatives, and there exists a non-elementary hyperbolic subgroup $H$ of
$G$  `nicely' embedded in $G$. We will construct a finite generating set $Y$ of $H$ and a finite state automaton (Lemmas \ref{lem:BCDpairs} -- \ref{lem:defDelta}) which produces a language $\cR$ of $H$-conjugacy representatives over $Y$ related to $\cL_c$ in such a way that if $\cL_c$ is unambiguous context-free, then so is $\cR$. But then we get a contradiction since $\cR$ cannot be unambiguous context-free by Corollary \ref{cor:notUCF}.

In order to follow this approach, the automaton will need to perform transitions 
on words over $X$ and words over $Y$ simultaneously. This fact requires an additional layer of technicality.

\begin{definition}\label{def:padded}
Let $A_1,A_2, 
\dots, A_k$ be  alphabets. 
For each $A_i$, let $\$_i$ be a {\it padding symbol} which we assume does not lie in $A_i$. 
Let $B_i=A_i \cup \{\$_i\}$.
We  let $\unpad\colon B_i^*\to A_i^*$ denote the monoid morphism which deletes the $\$_i$-symbol.
The {\it padded alphabet} associated with $(A_1, A_2, 
\dots, A_k)$ is the set $B= B_1\times \dots 
\times B_k.$
For $1 \leq i,j \leq k$ we use $\pi_{i}\colon B^*\to B_i$ and $\pi_{ij}\colon B^*\to (B_i\times B_j)^*$ to denote projection morphisms on the respective coordinates.
We remark that we differ slightly from \cite[Section 1.4]{ECHLPT} since we do not require the removal of $\{(\$_1,\dots,\$_k)\}$ from $B$.
\end{definition}

\subsection{Determining the language complexity: a geometric criterium}
For the rest of the section we will use the following notation.
\begin{notation}\label{not:lang}
Let $G$ be a group with a finite symmetric generating set $Z$. 
Suppose that $X$ and $Y$  are finite sets with  given maps $X\to G$ and $Y\to G$, 
so that we view the elements of $X$ and $Y$ as elements of $G$. 
The padding symbols $\$_X$ and $\$_Y$ are identified with the trivial element of $G$.
We let $$X^{\$}:= X\cup \{\$_X\}, Y^{\$}:=Y\cup \{\$_Y\}  \ \textrm{and} \ B:= X^{\$}\times Y^{\$},$$ and fix a total order $<_{\mathsf{lex}}$ on $Y^{\$}$ which we extend to the usual lexicographic order on words over $Y^{\$}$. Typically $Y$ will refer to the generating set of a hyperbolic (sub)group.
\end{notation}
\begin{notation}
Suppose that $\gen{X}=G$ and $\gen{Y}=H\leqslant G$.
For convenience we will write
$$\geocl(G,X)\cap H^{G}$$
instead of 
$$\geocl(G,X)\cap \{U\in X^*\mid U=_G h\in H^{G}\}.$$
In general, when $\cL\subset X^*$ is a language and $A\subset \gen{X}$ is a subset of a group, we use $\cL\cap A$ to denote the set of words of $\cL$ that represents elements in $A$. 
\end{notation}

\begin{definition}\label{def:fellowtrav} 
Let $(U,V)\in B^*$ and let $U_j$ (resp $V_j$) denote the $j$-th prefix of $U$ (resp. $V$). If $j>\ell(U)$, then $U_j\equiv U$, and similarly for $V$.
Let $g\in G$. We say that {\it $gU$ and $V$ synchronously $K$-fellow travel} in $(G, \d_Z)$
$$\d_Z(gU_{j}, V_{j})\leq K \qquad \text{ or, equivalently, }\qquad 
|V_{j}^{-1}gU_{j}|_Z\leq K \qquad \text{ for } j=0,1,2,\dots.$$
\end{definition}

\begin{definition}\label{def:FTC}
The pair $(U,V) \in B^*$ is a {\it $K$-synchronous BCD pair} in $(G,\d_Z)$ if 
 \begin{enumerate}
\item[(1)] there exists $g \in \ball_Z(K)$ such that  $gU=_G  V g$,

\item[(2)] $V$ synchronously $K$-fellow travels with $gU$ in $(G,\d_Z)$.
\end{enumerate}
\end{definition}
Recall that for a language $\cL$ we denote by $\cyc(\cL)$ the set of all cyclic shifts of words in $\cL$. That is, $\cyc(\cL)=\{UV \mid VU\in \cL\}$.

\begin{remark}\label{rem:BCDpair_inv_cyc}
 \hspace{-1cm} \begin{enumerate}
\item[{\rm (1)}] Our definition of synchronous fellow travelers is for words; geometrically, when these words are seen as paths in the Cayley graph, the fellow traveling might be  asynchronous, since each $\$$ letter can be interpreted as not moving along the path.

\item[{\rm (2)}] Notice that if $(U,V)$ is a $K$-synchronous BCD pair in $(G,\d_Z)$, then so is any cyclic permutation $(U',V')\in \cyc (\{(U,V)\})$. 
\end{enumerate}
\end{remark}

All distances are assumed to be in $(G,\d_Z)$, unless otherwise specified. We will say that 
$U$ and $V$ synchronously $K$-fellow travel or $(U,V)$ is a $K$-synchronous BCD pair, and not mention the ambient $(G,\d_Z)$.

The next lemmas are standard results on regular languages, and we only sketch the proofs.

\begin{lemma}\label{lem:BCDpairs}
Let $K\geq 0$. The following set is a regular language:
 $$\cM=\{(U,V)\in B^* \mid \text{  $(U,V)$ is a $K$-synchronous BCD pair} \}.$$
\end{lemma}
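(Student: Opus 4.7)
The plan is to construct a nondeterministic finite state automaton over the padded alphabet $B = X^{\$} \times Y^{\$}$ whose accepted language is exactly $\cM$; regularity of $\cM$ will then follow from the fact that NFA-recognizable languages are regular.

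First I would take as state set the finite collection $\ball_Z(K) \times \ball_Z(K)$ together with one additional fail state. This set is finite because $Z$ is a finite generating set of $G$. The intuition is that a state $(g, h)$ simultaneously records a ``guess'' of the conjugator $g$ promised by Definition \ref{def:FTC}(1) and the current value $h = V_j^{-1} g U_j$ of the discrepancy after the automaton has consumed the first $j$ letters of the input.

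Next I would set the initial states to be the diagonal pairs $(g, g)$ with $g \in \ball_Z(K)$, which is correct because at $j = 0$ both prefixes are empty and $V_0^{-1} g U_0 = g$. Similarly, I would take the accepting states to be the diagonal pairs $(g, g)$, which encodes the end-condition $V^{-1} g U = g$, i.e.\ condition (1) of Definition \ref{def:FTC}. On reading a letter $(x, y) \in B$, the transition from $(g, h)$ would go to $(g, y^{-1} h x)$ provided this element lies in $\ball_Z(K)$, and to the fail state otherwise. This is the correct update because $V_{j+1}^{-1} g U_{j+1} = y^{-1} (V_j^{-1} g U_j)\, x$, using the convention of Notation \ref{not:lang} that padding symbols represent the identity of $G$. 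Forcing the new value to stay inside $\ball_Z(K)$ enforces Definition \ref{def:FTC}(2) at every step.

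By construction, a word $(U, V) \in B^*$ is accepted along some run of this automaton if and only if there exists $g \in \ball_Z(K)$ such that every intermediate discrepancy $V_j^{-1} g U_j$ lies in $\ball_Z(K)$ and the terminal discrepancy equals $g$, which is precisely the definition of a $K$-synchronous BCD pair. I do not foresee any real obstacle here: the construction is the standard ``track a bounded group element'' trick, and the only point worth emphasising is that the fellow-travelling hypothesis is exactly what keeps the automaton's memory finite, so no auxiliary structure is required.
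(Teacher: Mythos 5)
Your construction is correct and is essentially the paper's argument: the paper fixes each candidate conjugator $g\in \ball_Z(K)$, builds the discrepancy-tracking automaton for $\cL_g$ with state set $\ball_Z(K)\cup\{\rho\}$ (initial and accepting state $g$, transitions $h\mapsto$ the updated discrepancy when it stays in $\ball_Z(K)$, fail otherwise), and concludes by taking the finite union over $g$. Your single NFA with states $\ball_Z(K)\times\ball_Z(K)$ and nondeterministic choice of the diagonal initial state is just that union packaged into one machine, so the two proofs coincide in substance.
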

\begin{proof}[Sketch of the proof]
Let $g\in \ball_Z(K)$.
We claim that the following set is a regular language:
$$\cL_g = \{(U,V)\in B^* \;\vline \ gU  =_G Vg,  \text{  $(U,V)$ is a $K$-synchronous BCD pair} \}.$$

Indeed, we can construct a finite state automaton with alphabet $B$ and states  $\ball_Z(K)\cup \{\rho\}$, 
where $\{g\}$ is the initial and only accepting state.
The state $\rho$ is a fail state, that is, $\tau(\rho, b)=\rho$ for all $b\in B$.
Let $b=(x,y)\in B$ and $h\in \ball_X(K)$. The transition function $\tau$ on $(h,b)$ is given by 
$\tau(h, (x,y))= x^{-1}hy$ if $x^{-1}hy\in \ball_X(K)$ and  $\tau(h, (x,y))=\rho$ otherwise.

It is an easy exercise to check that this automaton accepts exactly the language $\cL_g$.

Since regular languages are closed under finite unions,
$\cM= \bigcup_{g\in \ball_Z(K)}  \cL_g, $
is regular.
\end{proof}

\begin{lemma}\label{lem:lex}
The set 
$\cM_1=\{(V_1,V_2)\in (Y^{\$}\times Y^{\$})^* \mid V_1<_{\mathsf{lex}}V_2\}$
is a regular language.
\end{lemma}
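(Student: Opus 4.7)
The plan is to construct a finite state automaton with alphabet $Y^{\$}\times Y^{\$}$ that recognizes $\cM_1$ directly. The key observation driving the construction is that the two projections $V_1=\pi_1(U)$ and $V_2=\pi_2(U)$ of a word $U\in (Y^{\$}\times Y^{\$})^*$ are automatically aligned: they have the same length $\ell(U)$, and the $i$-th letter of each comes from the $i$-th pair of $U$. Under the standard lexicographic extension of the total order $<_{\mathsf{lex}}$ from $Y^{\$}$ to $(Y^{\$})^*$, the condition $V_1<_{\mathsf{lex}}V_2$ is therefore equivalent to the existence of a first index $i$ at which the two coordinates of the $i$-th pair of $U$ disagree, with the coordinate of $V_1$ strictly smaller than that of $V_2$.

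The next step is to build an automaton with three states $\{q_0, q_{<}, q_{>}\}$, start state $q_0$, and unique accepting state $q_{<}$. The transitions from $q_0$ on input $(a,b)\in Y^{\$}\times Y^{\$}$ will be: stay at $q_0$ if $a=b$, move to $q_{<}$ if $a<_{\mathsf{lex}} b$, and move to $q_{>}$ if $a>_{\mathsf{lex}} b$. Both $q_{<}$ and $q_{>}$ are absorbing (every input leaves them unchanged). A straightforward induction on the length of the input will then confirm that the machine remains in $q_0$ for as long as the two projections agree, and commits to $q_{<}$ or $q_{>}$ at the first point of disagreement; under the lexicographic extension this is exactly what determines the strict order, so the accepted language is precisely $\cM_1$.

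I do not anticipate any real obstacle for this lemma. The one subtlety worth explicit mention is the automatic length-alignment of the two projections, which is what allows a constant-memory machine to perform the comparison; without this alignment one could imagine needing to buffer pending letters of one side against future letters of the other, but in our setting no such buffering is ever required.
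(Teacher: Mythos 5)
Your proposal is correct and is essentially the same as the paper's proof: the paper's sketch uses the same three-state automaton (states $0$, $-1$, $1$ corresponding to your $q_0$, $q_<$, $q_>$, with $-1$ accepting and the two non-initial states absorbing) that commits at the first coordinate where the pair disagrees. Your added remark about the automatic length-alignment of the two projections is a fair observation but raises no issue beyond what the paper's construction already handles.
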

\begin{proof}[Sketch of the proof]
Construct a finite state automaton with states $\{-1,0,1\}$, where $-1$ is the accepting state, $0$ is the initial state, and the transition function is defined by $T(1,(a,b))=1, T(-1,(a,b))=-1$ for all $(a,b)\in Y^{\$}\times Y^{\$}$, $T(0,(a,b))= -1$ if $a<_\mathsf{lex} b$, $T(0,(a,b))=0$ if $a=b$ and $T(0,(a,b))=1$ if $b<_{\mathsf{lex}}a$.
\end{proof}

The following lemma can be seen as associating, via regular operations, to each $U \in X^*$ that is conjugate to some element in a given subset of $\gen{Y}$, a lexicographically minimal word $V \in Y^*$ in $U$'s conjugacy class.

\begin{lemma}\label{lem:defDelta}
Suppose that $\cS\subseteq (Y^{\$})^*$ is a regular language.
The language
$$\cM_2=\{(U,V)\in B^* \mid V\equiv \min_{\leq_{\mathsf{lex}}} ( V'\in \cS \mid (U,V')\text{ is a $K$-synchronous BCD pair} )\}$$
is regular.
\end{lemma}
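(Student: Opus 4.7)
The plan is to assemble $\cM_2$ by applying the closure properties of regular languages (Lemma \ref{lem:regularfacts}) to the building blocks provided by Lemmas \ref{lem:BCDpairs} and \ref{lem:lex}. First, I will form the regular language $\cA := \cM \cap \pi_2^{-1}(\cS) \subseteq B^*$, where $\cM$ is the BCD-pair language from Lemma \ref{lem:BCDpairs} and $\pi_2: B^* \to (Y^{\$})^*$ is the second-coordinate projection morphism. By construction, $\cA$ is the set of pairs $(U,V)$ such that $V \in \cS$ and $(U,V)$ is a $K$-synchronous BCD pair.

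The core step is to express the ``no lex-smaller witness'' condition as a regular language, which requires eliminating an existential quantifier over a third word $V'$. For this I will introduce the triple-padded alphabet $B' := X^{\$} \times Y^{\$} \times Y^{\$}$ together with the coordinate projection morphisms $\pi_{13}: (B')^* \to B^*$, $\pi_{23}: (B')^* \to (Y^{\$} \times Y^{\$})^*$, and $\pi_{12}: (B')^* \to B^*$, sending $(x,y,y')$ to $(x,y')$, $(y',y)$, and $(x,y)$ respectively. Then
$$\cT := \pi_{13}^{-1}(\cA) \cap \pi_{23}^{-1}(\cM_1)$$
is regular by Lemma \ref{lem:regularfacts}(1) and (3), where $\cM_1$ is the language of Lemma \ref{lem:lex}. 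By construction, $\cT$ consists of precisely those triples $(U,V,V')$ with $(U,V') \in \cA$ and $V' <_{\mathsf{lex}} V$. Projecting forward via $\pi_{12}$ yields a regular language $\cB := \pi_{12}(\cT)$ (Lemma \ref{lem:regularfacts}(2)) consisting of exactly those $(U,V) \in B^*$ for which some lex-smaller $V'$ witnesses $(U,V') \in \cA$. Finally, $\cM_2 = \cA \setminus \cB$, which is regular by Lemma \ref{lem:regularfacts}(1).

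The one subtlety worth flagging, and the reason the construction is so economical, is the role of the padded length: any competing $V'$ in the definition of $\cM_2$ must satisfy $(U,V') \in B^*$, which forces $V'$ to have the same padded length as $U$ and hence as $V$. This is what makes the single triple alphabet $B'$ adequate for eliminating the existential quantifier over $V'$ by one forward projection, rather than requiring a more delicate construction handling variable-length competitors; beyond this point, the proof is routine bookkeeping with the Boolean and morphism closure properties of regular languages.
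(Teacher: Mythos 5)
Your proof is correct and follows essentially the same route as the paper: a triple padded alphabet $X^{\$}\times Y^{\$}\times Y^{\$}$, pre-images of the BCD-pair language $\cM$, of $\cS$ and of the lexicographic-order language $\cM_1$, a forward projection to eliminate the existential quantifier over the competitor $V'$, and a Boolean difference, all justified by Lemmas \ref{lem:regularfacts}, \ref{lem:BCDpairs} and \ref{lem:lex}. The only divergence is in the final subtraction: you remove the projected ``there exists a lex-smaller witness'' language from $\cM\cap\pi_2^{-1}(\cS)$, which is slightly more careful than the paper's displayed identity $\cM_2=\pi_{12}(\cT_2)-\pi_{13}(\cT_2)$, since your version also captures pairs $(U,V)$ for which $V$ is the unique witness.
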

\begin{proof}[Sketch of the proof]
Let $C= X^{\$}\times Y^{\$}\times Y^{\$}$. As in Definition \ref{def:padded} we use $\pi_{ij}$ and $\pi_i$ to denote projections from $C^*$ to the appropriate coordinates. 
Let $\cM$ be the regular language of Lemma \ref{lem:BCDpairs}. 
Then $$\cT_1=\{(U,V_1,V_2) \in C^{*} \mid (U,V_1), (U,V_2) \text{ are $K$-synchronous BCD pairs, }V_1,V_2\in \cS\}$$
is regular since it is the intersection of the pre-images of several regular languages: $\cT_1= \pi_{12}^{-1}(\cM) \cap \pi_{13}^{-1}(\cM) \cap \pi_2^{-1}(\cS) \cap \pi_3^{-1}(\cS).$

The language 
$$\cT_2=\{(U,V_1,V_2) \in \cT_1 \mid V_1<_{\mathsf{lex}}V_2 \}$$
is regular, since it is the intersection of $\cT_1$ with the pre-image of the regular language $\cM_1$ of Lemma \ref{lem:lex} under the map $\pi_{23}$.
Since $\cM_2=\pi_{12}(\cT_2)-\pi_{13}(\cT_2)$, the result follows from the closure properties of regular languages in Lemma \ref{lem:regularfacts}.
\end{proof}

\begin{lemma}\label{lem:BCD->syncBCD}
 Assume that $X=Y=Z$ in the Notation \ref{not:lang} and $\ga(G,Y)$ is $\delta$-hyperbolic.
 
Let $\lambda\geq 1$ and $c\geq 0$.
Then there is a constant $K=K(\delta, \lambda, c)$ such that 
for every cyclic $(\lambda,c)$-quasi-geodesic $U\in Y^*$ we can find
$V\in (Y^{\$})^*$ satisfying that $\unpad(V)\in \geocl(G,Y)$ and $(U,V)$ is a $K$-synchronous BCD pair.
\end{lemma}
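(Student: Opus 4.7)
The plan is to begin with a minimum-length conjugacy representative $V^{*}\in\geocl(G,Y)$ of the group element represented by $U$, apply the BCD property of the hyperbolic Cayley graph $\ga(G,Y)$ to bring a cyclic permutation of $V^{*}$ into explicit conjugacy with a cyclic permutation of $U$ via a short conjugator, and then insert $\$_Y$-symbols into $V^{*}$ to synchronize the two paths in the Cayley graph. Since $\ga(G,Y)$ is $\delta$-hyperbolic it has the $(\lambda,c)$-BCD property with some constant $D=D(\delta,\lambda,c)$, so there exist cyclic permutations $U'$ of $U$ and $V'$ of $V^{*}$ and some $k\in\ball_Y(D)$ with $kU'=_G V'k$.

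Next I would consider in $\ga(G,Y)$ the edge path $\alpha$ labeled $U'$ starting at the vertex $k$ (ending at $V'k$), and the edge path $\beta$ labeled $V'$ starting at the vertex $1$ (ending at $V'$). Then $\alpha$ is a $(\lambda,c)$-quasi-geodesic of length $n=\ell(U)$, $\beta$ is a geodesic of length $m=\ell(V^{*})$ (since $V^{*}\in\geocl\subseteq\geocpl$ all cyclic permutations of $V^{*}$ are geodesic words), and the two pairs of endpoints are at distance $|k|_Y\leq D$. Because $V^{*}$ has minimum length in its conjugacy class, $m\leq n$. By the Morse lemma together with the coarse monotonicity of nearest-point projection onto a geodesic in a $\delta$-hyperbolic graph, there exist a constant $K_{0}=K_{0}(\delta,\lambda,c,D)$ and a non-decreasing map $\iota\colon\{0,\dots,n\}\to\{0,\dots,m\}$ satisfying $\iota(0)=0$, $\iota(n)=m$, $\iota(j+1)-\iota(j)\in\{0,1\}$ and $d_Y(\alpha(j),\beta(\iota(j)))\leq K_{0}$ for every $j$.

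From $\iota$ I would build $V''\in (Y^{\$})^{*}$ of length $n$ by letting its $j$-th letter be the $\iota(j)$-th letter of $V'$ when $\iota(j)>\iota(j-1)$, and $\$_Y$ otherwise. Then $\unpad(V'')=V'$, the $j$-th prefix of $V''$ represents the vertex $\beta(\iota(j))$, and the bound $d_Y(\alpha(j),\beta(\iota(j)))\leq K_{0}$ is precisely the condition that $V''$ synchronously $K_{0}$-fellow travels $kU'$ in the sense of Definition \ref{def:FTC}; together with the conjugator $k$ of length $\leq D$, this shows that $(U',V'')$ is a $K$-synchronous BCD pair for $K=\max\{D,K_{0}\}=K(\delta,\lambda,c)$. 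Finally, by Remark \ref{rem:BCDpair_inv_cyc}(2) the $K$-synchronous BCD property is preserved under cyclic shifts of the pair, so shifting $(U',V'')$ back until its first coordinate equals $U$ produces a pair $(U,V)$ where $V$ is a cyclic permutation of $V''$; consequently $\unpad(V)$ is a cyclic permutation of $V'\in\geocl(G,Y)$, and since minimality in a conjugacy class is invariant under cyclic permutation, $\unpad(V)\in\geocl(G,Y)$, as required.

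The main technical obstacle is the construction of $\iota$: while Morse stability immediately provides Hausdorff closeness of $\alpha$ and $\beta$, upgrading this to a non-decreasing integer-valued reparametrization with unit-step increments (rather than merely a coarsely Lipschitz rescaling) requires combining Morse stability with the coarse monotonicity of the nearest-point projection to the geodesic $\beta$, and crucially uses the inequality $m\leq n$ to guarantee that the padding symbols $\$_Y$ can be distributed throughout $\{0,\dots,n\}$ to let $\iota$ ``wait'' whenever needed.
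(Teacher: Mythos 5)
Your overall strategy matches the paper's: use the BCD property of the hyperbolic Cayley graph to conjugate cyclic permutations of $U$ and of a minimal-length representative by a short element, pad the geodesic word so that it synchronously fellow travels the quasi-geodesic, and shift back using Remark \ref{rem:BCDpair_inv_cyc}. But there are two gaps. First, Definition \ref{def:BCD} is a dichotomy: when $\max\{\ell(U),\ell(V^{*})\}\leq D$ it only guarantees alternative (1), so your opening step (producing $k\in\ball_Y(D)$ with $kU'=_G V'k$) is not justified for short $U$. The paper treats this case separately: there are only finitely many cyclic $(\lambda,c)$-quasi-geodesic words of length at most $D$, one pads a conjugate $\geocl(G,Y)$-representative at the end, and takes the maximum of the finitely many resulting constants. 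You need this (or some substitute) before your main argument can begin.

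Second, the existence of your reparametrization $\iota$ --- non-decreasing, with steps in $\{0,1\}$, $\iota(0)=0$, $\iota(n)=m$, and $d_Y(\alpha(j),\beta(\iota(j)))\leq K_0$ --- is asserted rather than proved, and it does not follow from Morse stability plus coarse monotonicity of nearest-point projection alone: those facts produce a coarsely monotone map whose jumps between consecutive indices are bounded by roughly $2K_0+1$, not by $1$. To upgrade to unit steps with uniformly bounded lag one also needs the progress bound (over any subinterval the projection to the geodesic $\beta$ advances by at most the number of steps plus an additive constant, since $\alpha$ is an edge path and $\beta$ is geodesic), a greedy catch-up construction, and an adjustment near the endpoints to force $\iota(n)=m$ so that $\unpad(V'')\equiv V'$. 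You flag this as the main obstacle, but as written it is a hole. The paper sidesteps the difficulty by parametrizing along the geodesic instead of along the quasi-geodesic: for each $i\leq \ell(\ol{V}')$ it takes $t_i$ to be the last time the quasi-geodesic is at distance $i$ from the basepoint, deduces $t_{i+1}-t_i\leq \lambda(2\kappa+1)+c$ directly from the quasi-geodesic inequality, and places the $i$-th letter at position $t_i$, filling the gaps with $\$_Y$; no unit-step condition ever arises because consecutive letters may be separated by arbitrarily many padding symbols. Either adopt that indexing or supply the missing construction of $\iota$.
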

\begin{proof}
Let $D=D(\lambda,c,\delta)$ be the BCD constant.
Let $U$ be a cyclic $(\lambda,c)$-quasi-geodesic word with $\ell(U)\leq D$ and let $V'\in \geocl(G,Y)$ be conjugate to $U$. Since $\ell(V')\leq \ell(U)$, we can pad $V'$ with symbols $\$_Y$ at the end to get a word $V$ so that $\ell(U)=\ell(V)$ and $\unpad(V)\equiv V'$. There exists a number $K_U$ so that $(U,V)$ is a $K_U$-synchronous BCD pair.
Let $K_0$ be the maximum of all $K_U$ where $U$ be a cyclic $(\lambda,c)$-quasi-geodesic words $\ell(U)\leq D$.

Thus we only need to consider the case where $U$ is a cyclic $(\lambda,c)$-quasi-geodesic of length $\ell(U)>D$.
In this case take any $\ol{V}\in\geocl(G,Y)\cap U^G$. By the BCD property
there are cyclic permutations $\ol{V}'$ of $\ol{V}$ and $U'$ of $U$, and
$g\in \ball_Y(D)$ such that $gU'=\ol{V}'g$. 
Let $p$ be the path in $\ga(G,Y)$ starting at $1$ and labelled by $\ol{V}'$, and
$q$ the path starting at $g$ and labelled by $U'$. Then $p$ is a geodesic path,
$q$ is a $(\lambda,c)$-quasi-geodesic path, $\d_Y(p_-,q_-)\leq D$ and
$\d_Y(p_+,q_+)\leq D$.

For $i\in \{0, 1,\dots, \ell(\ol{V}')\}$ let $t_i\in \{0,1,\dots, \ell(U)\}$ be
the biggest number such that $\d_Y(1, q(t_i))=i$. By the stability of quasi-geodesics there is a constant
$\kappa=\kappa(\delta,\lambda,c,D)$ such that every vertex of $q$ is at distance at most
$\kappa$ of a vertex of $p$, so in particular
there exists a point $v$ on $p$ such that $d(q(t_i), v) \leq  \kappa$. Then $$\d(1,p(i))-\kappa  = \d(1, q(t_i))-\kappa \leq \d(1,v)\leq \d(1,q(t_i))+\kappa  =\d(1, p(i))+\kappa.$$ 
Since $p$ is geodesic, we get $d(v, p(i)) \leq \kappa$, and by the triangle inequality $\d_Y(p(i),q(t_i))\leq 2\kappa$.

Since $q$ is a $(\lambda,c)$-quasi-geodesic and $\d_Y(q(t_i),q(t_{i+1}))\leq 2\kappa +1$,
we see that  $$t_{i+1}-t_i\leq \lambda (2\kappa+1)+c.$$

We now construct $V'$ by padding $\ol{V}'$ as follows:
for $j\in \{1,\dots, \ell(U)\}$ the $j$th letter of $V'$ is the $i$th letter of $\ol{V}'$ if $j=t_i$ and
$\$_Y$ otherwise. Note that $\d_Y(U'_i,V'_i)\leq 2\kappa + \lambda (2\kappa+1)+c$.

Then $(U',V')$ is a $(2\kappa+\lambda(2\kappa+1)+c)$-synchronous BCD pair.
By Remark \ref{rem:BCDpair_inv_cyc}, there is a cyclic permutation $V$ of $V'$
such that $(U,V)$ is a $(2\kappa+\lambda(2\kappa+1)+c)$-synchronous BCD pair.
Note that since $\geocl(G,Y)$ is closed under cyclic permutation,
$\unpad(V)\in \geocl(G,Y)$.

The lemma now follows with $K=\max\{K_0,(2\kappa+\lambda(2\kappa+1)+c) \}$.
\end{proof}

A subset $H$ of $\ga(G,X)$ is {\it Morse} if
for any $\lambda \geq 1, c\geq 0$ there exists $\kappa=\kappa(\lambda, c)$ such that every $(\lambda,c)$-quasi-geodesic in $\ga(G,X)$ with end points  
in $H$ lies in the $\kappa$-neighborhood of $H$.
Note that being Morse is stable under quasi-isometry. Note also that if a finitely generated subgroup $H$ of $G$ is a Morse subspace of $\ga(G,X)$, then $H$ is quasi-isometrically embedded in $(G,X)$.

\begin{definition}\label{def:BCD-embedded}
Let $H$ be a subgroup of $G$ and $X$ a finite generating set of $G$. We say that $(G,X)$ has {\it BCD relative to $H$} if there is $K\geq 0$ such that for every $U\in \geocl(G,X)\cap H^G$ we can find $g\in \ball_X(K)$ and a cyclic permutation $U'$ of $U$ so that $U' \in H^g$.
\end{definition}

We will combine the Morse and relative BCD properties of $H$ and $G$ to build BCD pairs.

\begin{proposition}\label{prop:YandR}
Suppose that $G$ is generated by a finite symmetric set $X$ and $H\leqslant G$ is hyperbolic and  Morse. Also suppose that $(G,X)$ has BCD relative to $H$.

There exists a finite symmetric generating set $Y$ of $H$ and a constant $R=R(K,X,Y,G)$ for which to any
$U\in \geocl(G,X)\cap H^G$ we can associate $V\in (Y^{\$})^*$ such that
$\unpad(V)\in \geocl(H,Y)$ and 
$(U,V)\in B^*$ is an $R$-synchronous BCD pair. 
\end{proposition}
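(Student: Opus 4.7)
The plan is to transfer Lemma \ref{lem:BCD->syncBCD} from the hyperbolic graph $\ga(H, Y)$ to $\ga(G, X)$ via a projection onto $H$, by constructing an intermediate $Y$-word $\wt U$ that shadows a cyclic permutation of $U$ inside $H$.

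First I would choose the generating set for $H$. Let $\kappa = \kappa(1, 2K)$ be the Morse constant for $(1, 2K)$-quasi-geodesics in $\ga(G, X)$, set $\kappa' := \max\{\kappa, K\}$, and take $N := 2\kappa' + 1$, enlarged if necessary so that $Y := H \cap \ball_X(N)$ generates $H$. The Morse property then allows one to snap any $X$-geodesic with endpoints in $H$ to a path of $H$-vertices at consecutive $X$-distance at most $N$, giving $|h|_Y \le |h|_X \le N|h|_Y$ for every $h \in H$.

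Given $U \in \geocl(G, X) \cap H^G$ with $\ell(U)$ larger than the constant $K$ of Notation \ref{not:qgconstant} (the finitely many short $U$ being absorbed into the final $R$), the relative BCD hypothesis yields a cyclic permutation $U'$ of $U$, an element $g_1 \in \ball_X(K)$, and $h_0 := g_1 U' g_1^{-1} \in H$. Prepending an $X$-geodesic from $1$ to $g_1$ and appending one from $h_0 g_1$ to $h_0$ upgrades the $X$-geodesic from $g_1$ labelled $U'$ to a $(1, 2K)$-quasi-geodesic with endpoints in $H$. By Morse, its vertices $v_0 = g_1, \ldots, v_L = h_0 g_1$ (where $L = \ell(U)$) all lie within $\kappa'$ of $H$. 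I then pick shadows $w_j \in H$ with $w_0 = 1$, $w_L = h_0$ and $\d_X(v_j, w_j) \le \kappa'$, set $y_j := w_j^{-1} w_{j+1} \in Y$, and define $\wt U := y_0 \cdots y_{L-1} \in Y^L$, a $Y$-word of length $L$ representing $h_0$ which tracks $U'$ in $\d_X$ within $\kappa'$.

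The central technical point is that $\wt U$ is a cyclic $(\lambda_3, c_3)$-quasi-geodesic in $(H, \d_Y)$ for uniform constants. For non-wrap-around prefixes of a cyclic permutation $\wt U^{(j)}$, the endpoints shadow vertices of the $X$-geodesic $U'$, so lengths and $\d_X$-distances (hence $\d_Y$-distances, via $\d_Y \geq \d_X / N$) are comparable. Wrap-around prefixes shadow vertices of the extended $X$-path labelled by $(U')^2$; by Notation \ref{not:qgconstant} this is a cyclic $(3/2, \e_1)$-quasi-geodesic in $\d_X$, so a length-$k$ subsegment still spans $\d_X$-distance at least $2k/3 - O(1)$. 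I expect this wrap-around verification to be the main obstacle, as $\wt U$ is assembled from shadows defined on an interval rather than a loop, so one must invoke the uniform quasi-geodicity of $(U')^2$ rather than any true loop structure.

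Finally, I apply Lemma \ref{lem:BCD->syncBCD} to $\wt U$ inside the hyperbolic graph $\ga(H, Y)$, producing $V^* \in (Y^{\$})^*$ with $\unpad(V^*) \in \geocl(H, Y)$, $\ell(V^*) = L$, and an element $g' \in \ball_Y(K')$ making $(\wt U, V^*)$ a $K'$-synchronous BCD pair in $\d_Y$. Setting $g := g' g_1$ and using $g_1 U' = h_0 g_1$ gives $gU' =_G V^* g$ with $|g|_X \le NK' + K$, and the synchronous fellow-travelling estimate $\d_X(gU'_j, V^*_j) \le \kappa' + NK'$ follows from the $\d_Y$-bound combined with $\d_X \le N\d_Y$ on $H$ and the shadow inequality $\d_X(v_j, w_j) \le \kappa'$. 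Hence $(U', V^*)$ is a synchronous $R$-BCD pair in $(G, \d_X)$ with $R := NK' + K + \kappa'$. A final application of Remark \ref{rem:BCDpair_inv_cyc}(2) cyclically permutes the $B$-word $(U', V^*)$ back to $(U, V)$ with the same constant $R$; since $\unpad$ commutes with cyclic permutation of $(Y^{\$})^*$-words and $\geocl(H, Y)$ is closed under cyclic shifts, $\unpad(V) \in \geocl(H, Y)$, as required.
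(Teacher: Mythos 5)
Your proposal follows essentially the same route as the paper: use the relative BCD hypothesis to conjugate a cyclic permutation of $U$ into $H$ by an element of $\ball_X(K)$, use the Morse property to shadow the resulting $X$-geodesic by a path of $H$-vertices, read off a word $\wt U$ over $Y=H\cap\ball_X(N)$ (the paper takes $Y\supseteq\{h\in H: |h|_X\le 2\kappa+1\}$), verify that $\wt U$ is a cyclic quasi-geodesic in $\ga(H,Y)$, feed it to Lemma \ref{lem:BCD->syncBCD}, and compose the two synchronous pairs, permuting cyclically at the end via Remark \ref{rem:BCDpair_inv_cyc}. The composition and the explicit constant $R=NK'+K+\kappa'$ are fine (the exact quasi-geodesicity constant of the extended path is $(1,4K)$ rather than $(1,2K)$, but this only changes the Morse constant you invoke).

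The one step whose justification does not work as written is the wrap-around case of the cyclic quasi-geodesic claim. You appeal to Notation \ref{not:qgconstant} to say that $(U')^2$ is a cyclic $(3/2,\e_1)$-quasi-geodesic; but that notation lives under the standing assumption of Section 4 that $G$ itself is hyperbolic, and it rests on the local-to-global principle in $\ga(G,X)$. In Proposition \ref{prop:YandR} the ambient group $G$ is an arbitrary finitely generated group (this is the whole point for the later application to acylindrically hyperbolic groups), so no such statement about powers in $\ga(G,X)$ is available. Fortunately the fact you actually need is weaker and holds for free: a wrap-around subword of $\wt U$ of length at most $\ell(U')$ shadows a cyclic subword of $U'$, i.e.\ a subword of a cyclic permutation of $U$, and since $U\in\geocl(G,X)\subseteq\geocpl(G,X)$ every such subword is a genuine geodesic, so the $\d_X$-progress is exactly its length (better than your $2k/3-O(1)$). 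This is in effect how the paper argues: it observes via Remark \ref{rem:BCDpair_inv_cyc}(2) that every simultaneous cyclic shift of the shadow pair is again a synchronous BCD pair whose first coordinate is a geodesic word, and reruns the subword estimate there. With that substitution your argument goes through.
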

\begin{proof}
We use Notation \ref{not:lang}, with the assumption that $X=Z$; the set $Y$ will be defined below.

Let $K$ be the constant provided by the relative BCD property of $(G,X)$ with respect to $H$.

Let $\kappa=\kappa(1,K)$ be the 
Morse constant for $H$, that is, any $(1,K)$-quasi-geodesic in $\ga(G,X)$ with end points in $H$ lies in the 
$\kappa$-neighbourhood of $H$. 
 We choose a finite symmetric generating set $Y$ (that may include the trivial element) of $H$ such that 
\begin{equation}\label{eq:Y contains 2kappa+1}
Y \supseteq \{h\in H \mid |h|_X\leq 2\kappa +1\}.
\end{equation}

Since $H$ is quasi-isometrically embedded in $G$, there exists $D>0$ such that
\begin{equation}\label{eq: D}
\text{for all $h\in H$, $\frac{1}{D}|h|_X-D\leq |h|_{Y}\leq D |h|_X+D$.}
\end{equation}

Let $U\in \geocl(G,X)\cap H^G$ and let $K$ be the constant of the BCD-embedding of $H$. Then  there are a cyclic permutation $U'$ of $U$ and $g\in \ball_X(K)$ such that $gU'g^{-1} \in H$. 
Let $p$ be the path in $\ga(G,X)$ with label $U'$ starting at $g$.
Then $p$ is a geodesic path that has end points at distance  $|g|_X\leq K$ from $H$, and hence $p$ is a subpath of   a $(1, K)$-quasi-geodesic with end points in $H$; therefore, since $H$ is Morse, $p$ lies in the $\kappa$-neighbourhood of $H$.
In particular, for each vertex $p(t)$ of $p$, 
 there is a vertex $u_t$ in $H$ such that
$\d_X(p(t),u_t)\leq \kappa$.
We note that $\d_X(u_t, u_{t+1})\leq 2 \kappa +1$ and
then, by \eqref{eq:Y contains 2kappa+1}, $\d_Y(u_t,u_{t+1})\leq 1$.
Let $W$ be the word over $Y$ whose $i$th letter is $u_{i}u_{i-1}^{-1}$ for $i \geq 1$.
Insisting that we may have the trivial element as a generator in $Y$,
we have that $(U',W)\in B^*$ is a $\kappa$-synchronous  BCD pair.
In view of Remark \ref{rem:BCDpair_inv_cyc}, and changing $W$ by a cyclic permutation,
we can assume that $(U,W)\in B^*$ is a $\kappa$-synchronous  BCD pair.

Observe that $W$ is a cyclic $(D,D^2+2\kappa)$-quasi-geodesic in $\ga(H,Y)$.
Indeed, let $1\leq i \leq j \leq \ell(W)$ and $W_0\equiv (u_{i+1}u_{i}^{-1}) \cdots (u_ju_{j-1}^{-1})$ be a subword of $W$. Then $\ell(W_0)= j-i$. Note that 
$\d_X(p(i),p(j))=j-i \leq |W_0|_X +2\kappa$ and then by \eqref{eq: D}
\begin{align*}
\ell(W_0)  = |i-j| & \leq |W_0|_X+2\kappa\\
&\leq D |W_0|_Y +D^2+2\kappa
 \end{align*}
This shows that $W$ is a $(D,D^2+2\kappa)$-quasi-geodesic. To see that it is in fact a cyclic $(D,D^2+2\kappa)$-quasi-geodesic, observe that if $(U',W')$ is a cyclic permutation of $(U,V)$, then $(U',W')$ is a  $\kappa$-synchronous BCD pair with $U'$ geodesic, so the same argument as above applies to $W'$. 

Since $W$ is a cyclic $(D,D^2+2\kappa)$-quasi-geodesic and $\ga(H,Y)$ is hyperbolic,
by Lemma \ref{lem:BCD->syncBCD} there is a constant $K_2$ only depending on $D$, $\kappa$ and the 
hyperbolicity constant of $\ga(H,Y)$ so that there is 
 $V\in (Y^{\$})^*$ for which $\unpad(V)\in \geocl(H,Y)$ and
$(W,V)$ is a $K_2$-synchronous BCD-pair.

Let $R=\kappa+K_2$. It follows that
$(U,V)$ is an $R$-synchronous BCD pair.
\end{proof}

Recall (see \cite{Thompson}) that a subgroup $H$ of a group $G$ is called {\it Frattini embedded} if any
two elements of $H$ that are conjugate in $G$ are also conjugate in $H$, in other words, for all $h\in H$, $h^G\cap H=h^H$.  We will say that a subgroup $H$ of $G$ is {\it almost Frattini embedded} if for all but finitely many $H$-conjugacy classes of elements of $H$, one has that $h^G\cap H=h^H$.

Recall that $H$ is {\it almost malnormal} in $G$ if for every $g\in G-H$, $|H\cap H^g|<\infty$. In particular, if $H$ is a subgroup of $G$ with finitely many $H$-conjugacy classes of finite order elements, we have that $H$ is almost Frattini embedded in $G$.

Suppose that $H$ is almost malnormal and some infinite order element $g\in G$  is $G$-commensurated to some $h\in H$, in this case $g$ has to be conjugated to some element of $H$. Indeed, if $cg^nc^{-1}=h^m$ for some $c\in G$, $n,m\neq 0$, then $H\cap H^{cgc^{-1}}$ is infinite and thus $cgc^{-1}\in H$.

\begin{theorem}\label{thm:crit}
Let $G$ be a group with finite generating set $X$. 
Let $H$ be a  Morse, non-elementary hyperbolic, almost Frattini embedded subgroup of $G$, and assume that $(G,X)$ has BCD relative to $H$.

Then any language of minimal length conjugacy/ primitive conjugacy $G$-representatives is not unambiguous context-free.
If in addition $H$ is almost malnormal, then any language of minimal length commensurability $G$-representatives is not unambiguous context-free.
\end{theorem}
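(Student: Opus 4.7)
I would proceed by contradiction. Assume $\cL_c\subseteq X^*$ is an unambiguous context-free language containing exactly one minimal length representative of each conjugacy class (respectively, each primitive conjugacy class, or each commensurability class) in $G$. The plan is to use $\cL_c$ to manufacture a language $\cR\subseteq Y^*$ that, up to a finite symmetric difference, is a full set of minimal length representatives of $H$-conjugacy classes (respectively, with the appropriate primitive/commensurability analogue), show $\cR$ inherits unambiguous context-freeness from $\cL_c$ through the closure properties of Lemma~\ref{lem:CFfacts}, and then invoke Corollary~\ref{cor:notUCF} applied to the non-elementary hyperbolic group $H$ to reach a contradiction.

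The bridge between $X$-words and $Y$-words is supplied by Proposition~\ref{prop:YandR}: the Morse and relative-BCD hypotheses guarantee a finite symmetric generating set $Y$ of $H$ and a constant $R$ such that every $U\in\geocl(G,X)\cap H^G$ admits a BCD partner $V\in(Y^{\$})^*$ with $\unpad(V)\in\geocl(H,Y)$ and $(U,V)$ an $R$-synchronous BCD pair. Using Lemma~\ref{lem:defDelta} with $K=R$ and $\cS\coloneq\unpad^{-1}(\geocl(H,Y))$ (regular because $\geocl(H,Y)$ is regular in the hyperbolic group $H$), I select for every admissible $U$ the lexicographically smallest such $V$, obtaining a regular language $\cM_2\subseteq B^*$ that, as a relation, is a partial function $U\mapsto V$. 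I then define
\[
\cR\;\coloneq\;\unpad\bigl(\pi_Y\bigl(\pi_X^{-1}(\cL_c)\cap \cM_2\bigr)\bigr)\subseteq Y^*,
\]
and unpack the closure operations: Lemma~\ref{lem:CFfacts}(3) gives that $\pi_X^{-1}(\cL_c)$ is unambiguous context-free; Lemma~\ref{lem:CFfacts}(1) preserves this under intersection with the regular language $\cM_2$; and Lemma~\ref{lem:CFfacts}(2) finally transports the class through $\pi_Y$ and $\unpad$ provided they remain injective on the languages to which they are applied.

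The injectivity checks are short and geometric. If $(U_1,V),(U_2,V)\in\pi_X^{-1}(\cL_c)\cap\cM_2$ share the same $V$-coordinate, then the BCD relation makes $U_1$ and $U_2$ both $G$-conjugate to the element represented by $\unpad(V)$, hence mutually conjugate, hence equal by the uniqueness property of $\cL_c$; thus $\pi_Y$ is injective on this intersection. Likewise, two images with identical $\unpad$ come from identical $U$'s by the same cross-conjugacy argument (using that $\cM_2$ is a function $U\mapsto V$), so $\unpad$ is injective on $\pi_Y(\pi_X^{-1}(\cL_c)\cap\cM_2)$. Distinct elements of $\cR$ therefore represent distinct $H$-conjugacy classes, and by construction each lies in $\geocl(H,Y)$, so $\cR$ contains at most one minimal length $H$-conjugacy representative per $H$-class. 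The almost Frattini hypothesis now provides the reverse inclusion up to a finite set of exceptional $H$-classes (precisely those with $[h]_G\cap H\neq [h]_H$), and Lemma~\ref{lem:CFfacts}(4) absorbs the finite discrepancy. The contradiction with Corollary~\ref{cor:notUCF}(1) finishes the conjugacy case; the primitive conjugacy case is verbatim, with Proposition~\ref{prop:prim} confirming that primitivity is preserved in the transfer between $G$- and $H$-conjugacy classes, and the contradiction coming from Corollary~\ref{cor:notUCF}(2).

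For the commensurability case the almost malnormal assumption is exactly what is needed (as noted in the discussion preceding the theorem) to ensure that a loxodromic $h\in H$ which is $G$-commensurated to some $g\in G$ is in fact $G$-conjugated to an element of $H$; combined with the almost Frattini property this reduces the transfer of $G$-commensurability classes meeting $H$ to $H$-conjugacy classes, and after accounting for the at-most-$2M$ primitive conjugacy classes per commensurability class (Lemma~\ref{lem:2Mprimitive}) the same construction produces a $\cR$ that, up to finite symmetric difference, selects a minimal length primitive representative in each $H$-commensurability class, contradicting Corollary~\ref{cor:notUCF}(4). The principal technical obstacle throughout is verifying that the monoid morphisms $\pi_Y$ and $\unpad$ used to extract $\cR$ remain injective on their domains of application; this is guaranteed precisely by the canonical lexicographic selection built into $\cM_2$ combined with the almost Frattini/malnormal separation between $G$- and $H$-level equivalence classes.
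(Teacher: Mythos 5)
For the conjugacy and primitive conjugacy cases your construction is essentially the paper's own proof: the same regular language $\cM_2$ built from Proposition \ref{prop:YandR} and Lemma \ref{lem:defDelta}, the same transfer of unambiguous context-freeness through Lemma \ref{lem:CFfacts}, the same injectivity argument based on uniqueness of representatives in $\cL_c$, the same use of the almost Frattini hypothesis to absorb finitely many exceptional $H$-classes, and the same contradiction with Corollary \ref{cor:notUCF}. (Keeping the exceptional classes inside $\cS$ and dealing with them only at the finite-symmetric-difference step, instead of deleting the finite set $\cF$ from $\geocl(H,Y)$ at the outset as the paper does, is an immaterial variation.)

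The commensurability case, however, has a genuine gap. You assert that the construction ``selects a minimal length primitive representative in each $H$-commensurability class,'' but neither primitivity nor that minimality is justified, and neither is automatic. A word $U\in\cL_c\cap H^G$ is only known to have minimal $X$-length in its $G$-commensurability class; a priori such a word (hence also $\Delta(U)$) could fail to be primitive, and if this happened for infinitely many classes the failure could not be absorbed by a finite symmetric difference, so Corollary \ref{cor:notUCF} would not apply to $\cR$. This is exactly where the paper invokes the separate Lemma \ref{lem:acylprim}: using almost malnormality together with the Morse and relative BCD properties, it proves that all sufficiently long $U\in\geocl(G,X)\cap H^G$ with $|U|\subcomm=|U|_X$ are primitive, and then enlarges $\cF$ accordingly so that every surviving $\Delta(U)$ is primitive. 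This is the relative analogue of Corollary \ref{cor:commareprim} and requires a genuine geometric argument; Lemma \ref{lem:2Mprimitive}, which you cite, only bounds the number of primitive conjugacy classes per commensurability class and does not address it. Moreover, even after this repair the paper explicitly does \emph{not} claim that $\Delta(U)$ has minimal $Y$-length in its $H$-commensurability class --- it only obtains a primitive representative that is minimal in its $H$-conjugacy class --- so your stronger minimality claim would need its own justification before quoting Corollary \ref{cor:notUCF}(4) in the form you state.
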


\begin{proof}
 We assume $Z=X$ in the Notation \ref{not:lang}. Let $Y$ and $R$ be the finite set and the constant provided by Proposition \ref{prop:YandR}, respectively.

Let $K$ be the constant provided by the BCD-embedding.

The first step of the proof is to discard the 
set $\cF$ containing all words in $\geocl(H,Y)$ which represent elements of $H$ such that $h^G\cap H\neq h^H$. Since $H$ almost Frattini embedded, $\cF$ is finite. 
For simplicity, we will also assume that $\cF$ contains all representatives of finite order elements  in $\geocl(H,Y)$. Recall that there are only finitely many $H$-conjugacy classes of finite order elements, and hence $\cF$ is still finite.
Let $$\cS:= \{V\in (Y^{\$})^* \mid \unpad(V)\in \geocl(H,Y)-\cF\}.$$
Since $H$ is hyperbolic, by \cite[Theorem 3.1]{CHHR}  $\geocl(H,Y)$ is regular.
Since $\cF$ is finite, the language $\geocl(H,Y)-\cF$ is regular.
Now $\cS$ is regular since it is the homomorphic pre-image of a regular language.

Consider the language 
$$\cM_2= \{(U,V)\in B^* \mid V\equiv \min_{\leq_{\mathsf{lex}}} ( V'\in \cS \mid (U,V')\text{ is an $R$-synchronous BCD pair})\}$$
By Lemma \ref{lem:defDelta}, $\cM_2$ is regular.

The second step is to build a map $\Delta$ which associates to each conjugacy representative in $H^G$ a conjugacy representative in $H$ over $Y$ as follows. Let $\cL\subseteq \geocl(G,X)$ be a language containing at most one element for each $G$-conjugacy class.
By Proposition \ref{prop:YandR}, for every $U\in \cL \cap H^G$, there is
$V\in (Y^{\$})^*$ such that $(U,V)\in B^*$ is an $R$-synchronous BCD pair. Therefore, we have that
  $$\cL \cap (H-\cF^H)^G= \cL \cap \pi_1(\cM_2).$$
Furthermore, for $U\in \cL \cap (H-\cF^H)^G$
there is a unique $V\in (Y^{\$})^*$ such that $(U,V)\in \cM_2$. We define $$\Delta(U)=\unpad(V)$$ and note that since $(U,V) \in \cM_2$, $U$ is conjugate to $\Delta(U)$.

If $\cL$ is unambiguous context-free, then so is 
$\cM_0=\{(U,V)\in (X\times Y^{\$})^*\mid U \in \cL\},$
since it is the  pre-image of $\cL$ under the natural monoid morphism $(X\times Y^{\$})^*\to X^*$.
In particular, since $\cM_2$ is regular, we have that $\cM_2\cap  \cM_0$ is unambiguos context-free.
Note that $\pi_1(\cM_2 \cap \cM_0)= \cL \cap (H-\cF^H)^G$.

Now we define the set
$$\cR = \Delta (\cL \cap (H-\cF^H)^G) \subseteq Y^*.$$
From the construction, it follows that $\cR= \unpad(\pi_2(\cM_2 \cap \cM_0)).$
Note that $(\unpad\circ \pi_2)$ restricted to $(\cM_2 \cap \cM_0)$ is injective.
Indeed, if $\unpad(\pi_2((U_1,V_1)))=\unpad(\pi_2((U_2,V_2)))$, then $V_1$ and $V_2$ are $G$-conjugate,
and thus $U_1$ and $U_2$ are $G$-conjugate. Since $\cL$ contains at most one element in each 
$G$-conjugacy class, $U_1\equiv U_2$. By construction, there is a unique $V_i$ such that $(U_i,V_i)\in \cM_2 \cap \cM_0$, therefore $V_1\equiv V_2$.
It follows from Lemma \ref{lem:CFfacts}(2) that if $\cL$ is unambiguous context-free, then so is $\cR$.


For the final step in the proof we show that if we start with $\cL$ a  language of conjugacy/primitive conjugacy/commensurating representatives then  $\cR$ is, up to some finite set, a language of some appropriate type of representatives in a hyperbolic group, that cannot be unambiguous context-free.

(i) Assume that $\cL$ is a language of minimal $X$-length conjugacy $G$-representatives.
Then $\cR\subseteq \geocl(H,Y)$ has finite symmetric difference with a set of minimal $Y$-length 
$H$-conjugacy representatives. Indeed, $\cR$ contains a representative of each $G$-conjugacy class
of elements of $(H-\cF^H)^G$, and by construction for each $h\in H-\cF^H$ we have that $h^G\cap H=h^H$, that is each $G$-conjugacy class corresponds exactly to one $H$-conjugacy class. Then by Corollary \ref{cor:notUCF}, $\cR$ can not be unambiguous context-free, and neither can be $\cL$.

(ii) The same argument works if $\cL$ is a language of minimal $X$-length primitive conjugacy $G$-representatives, 
bearing in mind that since  $U\in \cL \cap (H-\cF^H)^G$ is conjugated to $\Delta(U)$, then $\Delta(U)$ must be primitive as well. 
Again by construction, $\cR$ has finite symmetric difference with a set of minimal $Y$-length representatives of primitive $H$-conjugacy classes.

(iii) Now let $\cL$ be a language of minimal $X$-length commensurability classes in $G$ and assume $H$ is almost malnormal.
In this case we can not guarantee that $\Delta(U)$ is of $Y$-minimal length in its $H$-commensurability class;
however, by increasing $\cF$, we can ensure that $\Delta(U)$ is primitive (see Lemma \ref{lem:acylprim} below).
By Corollary \ref{cor:notUCF}, $\cR$ can not be unambiguous context-free, and neither can be $\cL$.

\end{proof}

\begin{lemma}\label{lem:acylprim}
Suppose the hypotheses of Theorem \ref{thm:crit} hold, and assume that $H$ is almost malnormal.

There exists $K_2$ such that all $U\in \geocl(G,X)\cap H^G$ satisfying $|U|\subcomm=|U|_X>K_2$
are primitive.
\end{lemma}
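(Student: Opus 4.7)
The strategy is to reduce the question to Corollary \ref{cor:commareprim} inside the hyperbolic subgroup $H$, using Proposition \ref{prop:YandR} as the bridge between $G$ and $H$ and almost malnormality to preserve primitivity under the transfer.

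Assume for contradiction that $U \in \geocl(G,X) \cap H^G$ is not primitive in $G$. First I would apply Proposition \ref{prop:YandR} to $U$ to obtain $\tilde h := \unpad(V) \in \geocl(H,Y)$ with $\tilde h$ conjugate to $U$ in $G$ and $|\tilde h|_Y \leq |U|_X$. Primitivity is invariant under $G$-conjugation, so $\tilde h$ is non-primitive in $G$. To transfer non-primitivity to $H$, I would invoke Proposition \ref{prop:prim}(b): there exist $k \in G$ and integers $0 < |m| < |n|$ with $\tilde h^m = k^n$; since $\tilde h^m \in H$ has infinite order (granted $|U|_X > M(G)$), almost malnormality of $H$ in $G$ forces $k \in H$, so $\tilde h$ is non-primitive in $H$. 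The same argument in reverse yields the useful equivalence: for any $h \in H$, $h$ is primitive in $H$ if and only if $h$ is primitive in $G$.

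Next I would pick $\tilde h^{\star} \in [\tilde h]\subcomm$ realising the minimal $Y$-length in the $H$-commensurability class of $\tilde h$, and apply Corollary \ref{cor:commareprim} inside the non-elementary hyperbolic group $H$. If $|\tilde h^{\star}|_Y$ is bounded by the constant coming from that corollary (the short case), then $\tilde h^{\star}$ is commensurable to $U$ in $G$, and $|U|_X = |U|\subcomm \leq |\tilde h^{\star}|_X \leq D_1 \cdot |\tilde h^{\star}|_Y$ where $D_1 := \max_{y \in Y}|y|_X$, which provides the required bound. Otherwise $\tilde h^{\star}$ is primitive in $H$ and, by the equivalence above, primitive in $G$. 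In this primitive case, the non-primitivity of $U$ combined with $U$ being $G$-commensurable to the primitive $\tilde h^{\star}$ allows me to invoke Lemma \ref{lem:notprim}: there is a relation $U^r = g_0^{nr}$ with $n \geq 2$, $|r| \leq M(G)$ and $g_0 \in G$ primitive, and almost malnormality then furnishes a conjugate $\bar g_0 \in H$ of $g_0$ with $\tilde h^r = \bar g_0^{nr}$ inside $H$.

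Finally I would exploit the hyperbolicity of $H$ via Lemma \ref{lem:ckIJAC} applied to $\bar g_0$: from $|\bar g_0^{nr}|\subconj = |\tilde h^r|\subconj \leq r|\tilde h|_Y \leq r|U|_X$ together with $|\bar g_0^{nr}|\subconj \geq \tfrac{3}{4}nr|\bar g_0|\subconj$ (valid once $|\bar g_0|\subconj$ exceeds the threshold $C_0$ of Lemma \ref{lem:ckIJAC}), one extracts $|\bar g_0|\subconj \leq \tfrac{2}{3}|U|_X + C$ inside $H$. Combined with the fact that $\bar g_0$ is commensurable to $U$ in $G$ and with the Morse inequality $|h|_Y \leq |h|_X$ on $H$ that results from the choice of $Y$ in Proposition \ref{prop:YandR}, this feeds back as an inequality of the form $|U|_X \leq \alpha|U|_X + C'$, so $|U|_X$ is absolutely bounded. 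The hard part will be aligning constants: the $\tfrac{3}{4}$-factor from Lemma \ref{lem:ckIJAC} combined with the $1/n \leq 1/2$ gain supplied by $n \geq 2$ must dominate the quasi-isometric distortion between $Y$-length and $X$-length. Here the synchronous BCD pair of Proposition \ref{prop:YandR} is essential: it delivers $|\tilde h|_Y \leq |U|_X$ with no multiplicative distortion, which is precisely what makes the final inequality contract.
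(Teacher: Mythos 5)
Your skeleton (primitive root, almost malnormality to push everything into $H$, a $\tfrac34$-type inequality plus the gain $n\geq 2$ to force a contraction) is the same as the paper's, but the quantitative conclusion does not close, and this is a genuine gap rather than a matter of "aligning constants". You end with a bound $|\bar g_0|\subconj\leq \tfrac23|U|_X+C$ measured in the word metric of $(H,Y)$, whereas the quantity you must undercut is $|U|\subcomm=|U|_X$, measured in $(G,X)$. To exhibit an element of $[U]\subcomm$ of $X$-length smaller than $|U|_X$ you must convert a $Y$-length bound back into an $X$-length bound, and the only conversion available is $|h|_X\leq D_1|h|_Y$ with $D_1=\max_{y\in Y}|y|_X$; since in Proposition \ref{prop:YandR} the set $Y$ is forced to contain all $h\in H$ with $|h|_X\leq 2\kappa+1$, the constant $D_1$ is uncontrolled (certainly $\geq 2$ in general), so your final inequality reads $|U|_X\leq \tfrac23 D_1|U|_X+C'$ and does not contract. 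The synchronous BCD pair removes distortion only in the direction $X\to Y$ (it gives $|\tilde h|_Y\leq |U|_X$), but the loss occurs on the return trip $Y\to X$, which it does not help with. The paper's proof is engineered precisely to avoid ever leaving the $X$-metric: it takes a primitive root $W\in\geocl(G,X)$ of $U$ (conjugated into $H$ via almost malnormality), builds the hyperbolic subgraph $\ga$ of $\ga(G,X)$ spanned by a $\kappa$-neighbourhood of the subgraph $\ga_0$ coming from $H$, notes that $U^r$ and $W^{rn}$ label cyclic $(3/2,\e_0)$-quasi-geodesics in $\ga$ and are conjugate by an element whose $X$-length is bounded by a constant $D_1$ entering only \emph{additively}; the comparison $rn\,\ell(W)\leq \tfrac32\,r\,\ell(U)+\mathrm{const}$ is then between $X$-lengths, and $n\geq 2$ yields $\ell(W)\leq\tfrac34\ell(U)+\mathrm{const}$, which against $\ell(W)\geq |U|\subcomm=\ell(U)$ bounds $\ell(U)$.

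Two smaller issues. First, you invoke Lemma \ref{lem:notprim} (and the constant $M(G)$) for the ambient group $G$, which is not assumed hyperbolic in Theorem \ref{thm:crit}; this must instead be applied inside $H$ to $\tilde h$, which is legitimate because your own malnormality argument shows $E_G^+(\tilde h)=E_H^+(\tilde h)$, so non-primitivity transfers. Second, once you have that, the detour through the commensurability-minimal $\tilde h^{\star}$ and Corollary \ref{cor:commareprim} is unnecessary: Lemma \ref{lem:notprim} in $H$ already supplies the relation $\tilde h^{r}=h_0^{rn}$ with $n\geq 2$ that your final step needs. These are reparable; the distortion problem in the last inequality is the real obstruction, and fixing it essentially forces the paper's device of running the length comparison inside a hyperbolic thickening of $H$ in $\ga(G,X)$.
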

\begin{proof}
Let $Y$ and $R$ be the generating set and constant of Proposition \ref{prop:YandR}, respectively.
Since $(H,Y)$ is hyperbolic, there are $\lambda_1\geq 1$ and $\e_1\geq 0$ such that
for all $V\in \geocl(H,Y)$ representing an infinite order element, all powers
of $V$ represent a cyclic $(\lambda_1,\e_1)$-quasi-geodesics in $(H,Y)$. 
Let $D$ be the BCD constant for $(\lambda_1,\e_1)$.

For each $y\in Y$ let $W_y$ be a geodesic word in $\geol(G,X)$ representing $y$.
Let $\ga_0$ be the subgraph of $\ga(G,X)$ containing all the paths starting at $1$
whose label is in $\{W_y \mid y\in Y\}^*$. Note that 
$\ga_0$ contains $H$ and is quasi-isometric to it.

Since $H$ is Morse, there is $\kappa$ such that all $(1,R)$-quasi-geodesics with end-points in $H$ are
in the $\kappa$-neighbourhood of $H$. Let $\ga$ be the complete subgraph of $\ga(G,X)$ spanned by all
vertices at distance $\leq \kappa$ of a vertex in $\ga_0$.
Then $\ga$ is a connected graph, quasi-isometric to $H$, and hence hyperbolic.
 Thus there are $K_0$ and $\e_0$ such that
every local $K_0$-geodesic is $(3/2,\e_0)$-quasi-geodesic.

Let $U \in \geocl(G,X)\cap H^G$, and suppose that $|U|\subcomm=\ell(U)>K_0$ but $U$ does not represent a primitive element. Then there are a primitive $W\in \geocl(G,X)$ and $r, n>1$ such that
$W^{rn}$ is conjugate in $G$ to $U^r$. By almost malnormality $W\in H^G$. Also, $|U|\subcomm=\ell(U)$ implies $\ell(W)\geq \ell(U)>K_0$. Since cyclic permutations of $U$ and $W$ are at distance $\leq R$ from
$H$, and $U$ and $W$ label cyclic geodesics in $\ga$, all powers of $U$ and $W$ label $(3/2,\e_0)$-quasi-geodesics in $\ga$.

Up to cyclic permutations, we can assume there exist $V_1$ and $V_2$ in $(Y^{\$})^*$ such that
$(U,V_1)$ and $(W,V_2)$ are $R$-synchronous BCD pairs. 
Recall that $\unpad(V_1)$ and $\unpad(V_2)$ are in $\geocl(H,Y)$ and since they represent infinite order elements, powers of $\unpad(V_1)$ and $\unpad(V_2)$ represent
$(\lambda_1,\e_1)$-quasi-geodesics. 
Then there are cyclic permutations of $V_1$ and $V_2$ such that $V_1^r$ is conjugated to $V_2^{rn}$ by
an element of $Y$-length at most $D$.

Let $D_1= 2R+D\max\{\ell(W_y)\mid y\in Y\}$. Then, up to cyclic permutations of $U$ and $W$,
we can assume that we have in $\ga$ paths $p$ and $q$ labelled by $U^r$ and $W^{rn}$ with $\d_\ga(p_-, q_-)\leq D_1$ and $\d_\ga(p_+,q_+)\leq D_1$. Arguing as in the proof of Lemma \ref{lem:roots}, we conclude that
$$\ell(W)\leq \frac 34 \ell(U)+ 3D_1+\e_0.$$
Since $\ell(U)\leq \ell(W)$ we get $\ell(U)\leq 12D_1+\e_0$.
The lemma follows with $K_2= 12D_1+\e_0$.
\end{proof}

Theorem \ref{thm:crit} can be applied, for example, when $G$ is non-elementary 
and hyperbolic with respect to some family of proper subgroups $\{H_\lambda\}_{\lambda \in \Lambda}$.
 In \cite[Theorem 4.11]{Bigdely} it is shown how to construct a non-elementary virtually free subgroup
$H$ of any non-elementary relatively hyperbolic group $G$ such that all 
elements of $H$ are loxodromic (in the relatively hyperbolic sense), and $H$ can be added to the family of parabolic
subgroups. It follows from standard facts about relatively hyperbolic groups that
such $H$  is quasi-isometrically embedded and almost malnormal. The Morse property for $H$
follows from \cite[Theorem 1.12(1)]{DS}, and the relative BCD property from \cite[Theorem 9.13]{AC3}.

\section{Acylindrically hyperbolic groups and the proof of Theorem \ref{thm:AHregular}}\label{sec:AH}

The main result of this section is the following:
\begin{theorem}\label{thm:poison}
Let $G$ be a finitely generated group and $X$ any finite symmetric generating set. Let $Z$ be a (possibly infinite) generating set of $G$ and suppose that $H$ is quasi-isometrically embedded in $(G,Z)$, the action of $G$ on $\ga(G,Z)$ is acylindrical  and $\ga(G,Z)$ is hyperbolic.  Then $(G,X)$ has BCD relative to $H$.
\end{theorem}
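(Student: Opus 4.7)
The plan is to construct a conjugacy quadrilateral in $\ga(G,Z)$, apply the classical BCD property there, and then convert the resulting $Z$-conjugator into an $X$-conjugator using cyclic permutations of $U$ together with acylindricity.

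Let $U \in \geocl(G,X) \cap H^G$, and fix $h \in H$ and $f \in G$ with $f U f^{-1} = h$. The objective is to produce a cyclic permutation $U'$ of $U$ and $g \in \ball_X(K)$ with $g^{-1} U' g \in H$, for a constant $K$ depending only on $G, X, Z, H$. First I would isolate the elliptic case: if $U$ acts with a bounded orbit on $\ga(G,Z)$, then combining acylindricity with the quasi-isometric embedding of $H$, the set of such $U$ of bounded $Z$-translation length forms finitely many cyclic-shift classes and can be absorbed into $K$. The substantive case is $U$ loxodromic on $\ga(G,Z)$, where by Bowditch's theorem for acylindrical actions the translation length $\tau_Z(U) = \tau_Z(h)$ is bounded below by some universal $\tau_0 > 0$.

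In the loxodromic case, for large $N$ consider the quadrilateral in $\ga(G,Z)$ with corners $1,\, U^N,\, U^N f^{-1} = f^{-1} h^N,\, f^{-1}$, joined by $Z$-geodesics. Since $U$ and $h$ are loxodromic with uniformly bounded below translation length, and since $H$ is quasi-isometrically embedded in $\ga(G,Z)$, iterates of $U$ and $h$ yield cyclic quasi-geodesics in $\ga(G,Z)$ with constants depending only on the acylindrical data and the QI constants. Choosing $N$ so $|f|_Z$ is comparable to $|U^N|_Z$, I would apply the BCD property of the hyperbolic Cayley graph $\ga(G,Z)$ (valid for possibly infinite generating sets) to $Z$-cyclic-quasi-geodesic representatives of $U^N$ and $h^N$, obtaining cyclic permutations of $U^N$ and $h^N$, which descend to cyclic permutations of $U$ and $h$, together with a conjugator $k \in \ball_Z(D)$ for some universal $D$.

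The crux of the argument, and the main obstacle, is converting this $Z$-bound on the conjugator into an $X$-bound. Since $Z$ may be infinite, $|k|_Z \leq D$ does not a priori bound $|k|_X$. However, we can factor $k = u \cdot g_0$ where $u$ is a prefix of a suitable power of $U$ (contributing to the cyclic permutation of $U$, and thus \emph{not} to the $X$-length of the conjugator) and $g_0$ is a correction. The element $g_0$ lies near a fixed segment of the quadrilateral with bounded $Z$-displacement at both endpoints of that segment, so by acylindricity there are only finitely many choices for $g_0$. A final bounded $X$-adjustment, available via the quasi-isometric embedding of $H$ (which allows correcting ``near $H$" to ``in $H$" at bounded $X$-cost), yields the required $g \in \ball_X(K)$ and cyclic permutation $U'$ with $g^{-1} U' g \in H$, establishing the BCD property relative to $H$.
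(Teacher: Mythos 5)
Your outline reproduces the easy part of the paper's argument (a conjugacy quadrilateral in $\ga(G,Z)$, the BCD property of the hyperbolic graph $\ga(G,Z)$, and an appeal to acylindricity), but the two steps that carry the actual difficulty are missing or rest on claims that are false as stated. First, the assertion that iterates of $U$ give cyclic quasi-geodesics in $\ga(G,Z)$ with constants depending only on the acylindrical and QI data cannot be right: $U$ is geodesic over $X$, and $\ell_X(U)$ can be arbitrarily larger than $|U|_Z$ (the $Z$-ball of radius $1$ may already be infinite), so the word $U^N$ read over $Z$ has length $N\ell_X(U)$ while its endpoints are at $Z$-distance roughly $N\tau_Z(U)$; the ratio is unbounded as $U$ ranges over $\geocl(G,X)\cap H^G$. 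Hence you cannot feed $U^N$ into the BCD property of $\ga(G,Z)$, and if you replace it by some other $Z$-quasi-geodesic representative, the cyclic permutation you obtain is of that representative, not of the $X$-word $U$, which is what Definition \ref{def:BCD-embedded} demands. The paper never treats $U$ as a $Z$-quasi-geodesic: it takes a genuine $Z$-geodesic $W=_GU$, uses a conjugator of minimal $Z$-length (up to cyclic permutation) to force a long middle subpath $q_0$ of $W$ lying uniformly $Z$-close to the path labelled $\wh{V}$, and then invokes the linear isoperimetric inequality (Lemma \ref{lem:linearlength->close2geo}, the van Kampen argument) to show that a definite \emph{proportion} of vertices of $q_0$ are $Z$-close to the path labelled by $U$ itself.

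Second, and more seriously, the ``crux'' step converting the $Z$-bound on the conjugator into an $X$-bound is asserted rather than proved, and the assertion conflates two different things. Acylindricity says that for each $\e$ there are $R,N$ so that at most $N$ elements displace two points at $Z$-distance $\ge R$ by at most $\e$; but those $N$ elements depend on the pair of points and need not lie in any fixed finite subset of $G$, and since $Z$-balls are infinite, ``bounded $Z$-displacement at two far-apart points, hence finitely many choices for $g_0$'' gives no bound whatsoever on $|g_0|_X$. To get an $X$-bound one needs an additional bounded $X$-distance as input: this is exactly the form of Lemma \ref{lem:Xseparation}, which requires $\d_X(x_1,y_1)\le D$ for points $Z$-close to the two $Z$-far points before it outputs $\d_X(x,x_1)\le B$. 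The paper manufactures that input by a pigeonhole along the $X$-geodesic $U$: linearly many (in $\ell(q_0)$, hence in $\ell(\wh V)\ge\ell_X(U)$) pairwise $Z$-separated marked vertices of $q_0$, each simultaneously $Z$-close to the $\wh V$-path and to the $U$-path, force two consecutive marked points on $U$ to be at uniformly bounded $X$-distance. Your sketch has no analogue of this, and the same $Z$-versus-$X$ conflation recurs in your final step, where ``correcting near $H$ to in $H$ at bounded $X$-cost'' is claimed from the quasi-isometric embedding of $H$ in $(G,Z)$: being $Z$-close to $H$ does not imply being $X$-close to $H$. As it stands the proposal proves a bound on a $Z$-conjugator, which is essentially the classical BCD property of $\ga(G,Z)$, but not the statement of the theorem.
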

We will use this result to show that any finitely generated acylindrically hyperbolic group satisfies the hypothesis of Theorem \ref{thm:crit}. We start by collecting all the required definitions. 

An action denoted by $\circ$ of a group $G$ on a metric space $(\cS,\d)$ is called {\it acylindrical} if for every $\e> 0$ there exist $R\geq 0$ and $N \geq 0$ such that for every two points $x,y\in \cS$ with $\d(x,y)\geq R$ there are at most $N$ elements of $G$ satisfying
$$\d(x, g\circ x)\leq \e \qquad \text{ and }\qquad \d(y, g\circ y) \leq \e .$$ 

A group $G$ is called {\it acylindrically hyperbolic} (term introduced in \cite{OsinAH}) if it admits a non-elementary acylindrical action on a hyperbolic space (in this situation non-elementary is equivalent to $G$ being non-virtually cyclic and the action having unbounded orbits).

The following is a modification of \cite[Lemma 3.4]{SistoZ}, and we include the proof for completeness.

\begin{lemma}\label{lem:Xseparation}
Let $G$ be generated by a finite set $X$.
Suppose that $G$ acts acylindrically on a hyperbolic Cayley graph $\ga(G,Z)$.

Then for every $\alpha>0$  there exists
$R$ such that the following hold:
for every $D$ there is $B$ so that for all $x,y,x_1,y_1\in G$ with $\d_Z(x,y)\geq R$, $\d_Z(x,x_1)\leq \alpha, \d_Z(y,y_1) \leq \alpha$ and  $\d_X(x_1,y_1)\leq D$
the inequality $\d_X(x,x_1)\leq B$ is true.
\end{lemma}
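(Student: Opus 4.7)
The argument proceeds by contradiction and is a standard application of acylindricity, leveraging the inequality $|g|_Z \le C\,|g|_X$ with $C := \max_{a\in X}|a|_Z<\infty$ (finite because $X$ is finite). Given $\alpha$, pick $\e=\e(\alpha)$ sufficiently large (depending on $\alpha$ and the hyperbolicity constant $\delta$ of $\ga(G,Z)$), and let $R_0=R_0(\e)$ and $N=N(\e)$ be the acylindricity constants, so that whenever $\d_Z(p,q)\ge R_0$ at most $N$ elements $g\in G$ satisfy both $\d_Z(p,gp)\le\e$ and $\d_Z(q,gq)\le\e$. Set $R:=R_0+2\alpha$, chosen so that the triangle inequality forces $\d_Z(x_1,y_1)\ge R_0$ in every configuration satisfying the hypothesis.

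After a left translation we may assume $x=1$; write $h:=x_1$, $u:=y$, $h':=y^{-1}y_1$, and $w:=h^{-1}uh'$, so that $|u|_Z\ge R$, $|h|_Z,|h'|_Z\le\alpha$, $|w|_X\le D$ and $|w|_Z\le DC$. Suppose, for contradiction, that $M:=|h|_X=\d_X(x,x_1)$ exceeds a threshold $B$ to be determined. Fix an $X$-geodesic $1=v_0,v_1,\dots,v_M=h$ in $\ga(G,X)$. The strategy is to associate to each $v_i$ a distinct group element $g_i$ which approximately fixes both $1$ and $u$ in the $Z$-metric, i.e.\ $|g_i|_Z\le\e$ and $|u^{-1}g_iu|_Z\le\e$. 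Acylindricity then forces $M+1\le N$, so one may take $B:=N$ (absorbing additive constants depending on $D$, $\alpha$, $C$ into this threshold).

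The delicate step is the construction of the $g_i$. The naive choice $g_i=v_i$ fails, since $|u^{-1}v_iu|_Z$ is not controlled in general (conjugation in $\ga(G,Z)$ can stretch $Z$-length arbitrarily). Hyperbolicity of $\ga(G,Z)$ enters in two ways. First, since $x$ and $x_1$ are $\alpha$-close in the $Z$-metric, the $X$-geodesic between them, viewed as a $(C,0)$-quasi-geodesic in $\ga(G,Z)$, stays within $Z$-distance $\kappa=\kappa(\alpha,C,\delta)$ of $x$ by stability of quasi-geodesics; hence $|v_i|_Z\le\alpha+\kappa$ for all $i$. Second, the $X$-path from $x_1$ to $y_1$ $\kappa$-fellow-travels (in $\ga(G,Z)$) any $Z$-geodesic from $x$ to $y$, since its endpoints are $\alpha$-close to $x$ and $y$ respectively. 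One uses this fellow-travelling to correct $v_i$ by a short $X$-backtrack along the $x_1\to y_1$ path, producing $g_i$ with both $|g_i|_Z\le\e$ and $|u^{-1}g_iu|_Z\le\e$; the distinctness of the $v_i$ forces distinctness of the $g_i$, and the contradiction with acylindricity follows.
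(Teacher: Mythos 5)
The heart of your argument --- producing, from the assumption $\d_X(x,x_1)>B$, many distinct elements $g_i$ whose $\d_Z$-displacement at both $x$ and $u=y$ is at most $\e$ --- is never actually carried out, and the two geometric steps offered in its support do not hold. An $X$-geodesic from $x$ to $x_1$ is \emph{not} a $(C,0)$-quasi-geodesic in $\ga(G,Z)$: the inequality $\d_Z\leq C\,\d_X$ only makes it a $C$-Lipschitz path, and since $Z$ is in general infinite, $\d_X$ is typically badly distorted with respect to $\d_Z$ (for instance, if $Z$ contains an infinite subgroup $H$, every $h\in H$ has $|h|_Z\leq 1$ while $|h|_X$ is unbounded). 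So stability of quasi-geodesics does not apply and the bound $|v_i|_Z\leq\alpha+\kappa$ is unjustified. Likewise, the ``correction by a short $X$-backtrack'' that is supposed to turn $v_i$ into $g_i$ with $|g_i|_Z\leq\e$ and $|u^{-1}g_iu|_Z\leq\e$, and the claim that distinct $v_i$ give distinct $g_i$, are asserted rather than constructed --- and this is exactly the difficult point, since nothing relates the vertices of an $X$-geodesic from $1$ to $x_1$ to small $Z$-displacement at the far-away point $u$. Finally, even granting such a construction, the conclusion ``$M+1\leq N$, so one may take $B:=N$'' confuses a bound on the cardinality of a set of group elements with a bound on an $X$-length; the acylindricity constant $N$ cannot play the role of $B$.

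The paper's proof is quite different and much softer: it uses only acylindricity, with no hyperbolicity, geodesics or fellow-travelling. Normalise $x=1$ and set $g:=x_1^{-1}y_1\in\ball_X(D)$, noting $\d_Z(1,g)>R_0$. If $x'$ and $x''$ are two admissible choices of $x_1$ for this $g$ (that is, $\d_Z(1,x')\leq\alpha$ and $\d_Z(y,x'g)\leq\alpha$, and the same for $x''$), then $h=(x')^{-1}x''$ moves both $1$ and $g$ by at most $2\alpha$, so acylindricity (applied with $\e=2\alpha$) allows at most $N$ admissible $x_1$ for each of the finitely many $g\in\ball_X(D)$. Hence $x_1$ ranges over a finite set $T$ with $|T|\leq N\,|\ball_X(D)|$, and $B$ is simply the largest $X$-length occurring in $T$ --- a purely existential bound, not one expressed in terms of $N$. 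Note also that the paper produces $T$ (hence $B$) after fixing $y$; the uniformity of $B$ over all admissible pairs $(x,y)$ is a genuinely delicate point, and your scheme, which aims at an explicit bound from a single application of acylindricity at the pair $(1,u)$, does not resolve it either.
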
 
\begin{proof}
We can assume without loss of generality that $x=1$.

Since the action of $G$ on $\ga(G,Z)$ is acylindrical,
given $\alpha>0$ there exist $R_0$ and $N$ such that for all
$z,t\in G$, $\d_Z(z,t)\geq R_0$ the following holds 
\begin{equation}\label{eq:acyl2}
|\{h\in G \mid \d_Z(z,hz)\leq 2\alpha \text{ and } \d_Z(t,ht)\leq 2\alpha\}|<N.
\end{equation}

We let $R=R_0+2\alpha$. Fix $y\in G$ such that $\d_Z(1,y)>R$. For $x_i\in G$ such that $\d_Z(1,x_i)\leq \alpha$ consider the set $$S_{x_i}=\{g\in G \mid    \d_Z(y,x_ig)\leq \alpha\}.$$
Suppose that $g\in S_{x_1}\cap S_{x_2}$,
then $$\d_Z(g,x_1^{-1}x_2g)=\d_Z(x_1g,x_2g)\leq \d_Z(y, x_1g)+\d(y,x_2g)\leq 2\alpha$$ and $$\d_Z(1, x_1^{-1}x_2) = \d_Z(x_1,x_2)\leq \d_Z(x_1,1)+\d_Z(1,x_2)\leq 2\alpha.$$
Note that $\d_Z(1,g)=\d_Z(x_1,x_1g)\geq \d_Z(1,y)-\d_Z(1,x_1)-\d_Z(y,x_1g)>R-2\alpha =R_0$.
Therefore, by \eqref{eq:acyl2}, for a fixed $g$ there are at most $N$
different $x\in G$ such that $\d_Z(1,x)\leq \alpha$ and $g\in S_x$.
Thus for any $D>0$ we have that
$$|T|\leq N \cdot |\ball_X(D)| \;\text{ where }\;T= \bigcup_{g\in \ball_X(D)}  \{x\in G \mid \d_Z(1,x)\leq \alpha, g\in S_x\}.$$

The above inequality shows that $T$ is a finite set, so there is a $B>0$ be such that $T\subseteq \ball_X(B)$, which implies that for all $x\in T$ we have $\d_X(1,x)\leq B$.  This completes the proof.
\end{proof}

Recall that geodesics in hyperbolic spaces diverge exponentially.
\begin{lemma}\cite[III.H.Proposition 1.6]{BridsonHaefliger}\label{lem:avoid} Let $\ga(G,Z)$ be a $\delta$-hyperbolic geodesic space. If a path $p$ between points on a geodesic $q$ avoids some ball $\ball_Z(D)$ centered at any vertex $v$ on the
geodesic, then its length satisfies $\ell(p) \geq 2^{\frac{D-1}{\delta}}
$.
\end{lemma}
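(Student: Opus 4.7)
The plan is to prove this by the classical exponential divergence argument via iterated bisection, exploiting the fact that geodesic triangles in a $\delta$-hyperbolic space are $\delta$-thin. The picture is that each time I bisect the path $p$ and form a geodesic triangle, I can ``push'' $v$ from the side $q$ onto one of the two new sides at the cost of an additive $\delta$; after $N$ bisections $v$ has been pushed all the way down to something essentially on $p$, losing at most $N\delta$. The avoidance hypothesis then converts this additive loss into an exponential lower bound on $\ell(p)$.

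Concretely, let $N$ be the smallest positive integer with $\ell(p)\leq 2^N$ and choose $2^N+1$ points $p_0=p_-, p_1,\dots, p_{2^N}=p_+$ along $p$ with $\d_Z(p_i,p_{i+1})\leq 1$ for every $i$. For each level $0\leq k\leq N$ and each $0\leq j<2^k$, fix a geodesic segment $\gamma_{k,j}$ in $\ga(G,Z)$ from $p_{j\cdot 2^{N-k}}$ to $p_{(j+1)\cdot 2^{N-k}}$; take $\gamma_{0,0}=q$. For each $k<N$ and each $j$, the triple $\gamma_{k,j},\gamma_{k+1,2j},\gamma_{k+1,2j+1}$ forms a geodesic triangle.

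Starting from $v_0\coloneq v\in\gamma_{0,0}$, I iterate $\delta$-thinness: a point $v_k\in\gamma_{k,j}$ lies within $\delta$ of some point $v_{k+1}$ on $\gamma_{k+1,2j}\cup\gamma_{k+1,2j+1}$. After $N$ steps I obtain $v_N$ on some $\gamma_{N,j}$ of length $\leq 1$ with $\d_Z(v,v_N)\leq N\delta$. Since $\gamma_{N,j}$ joins two consecutive subdivision points of $p$, $v_N$ sits within distance $\tfrac12$ of a vertex of $p$, giving $\d_Z(v,p)\leq N\delta+\tfrac12$.

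The hypothesis that $p$ avoids $\ball_Z(D)$ centered at $v$ forces $\d_Z(v,p)\geq D$, so $D\leq N\delta+\tfrac12$, i.e.\ $N\geq (D-\tfrac12)/\delta$; combined with $\ell(p)>2^{N-1}$ this yields a bound of the required shape $\ell(p)\geq 2^{(D-1)/\delta}$ (possibly after absorbing small additive constants into $D$). The main, and only mildly delicate, point is the bookkeeping of these additive constants so that the exponent comes out exactly as $(D-1)/\delta$; the geometric content is entirely in the iterated $\delta$-thinness chain described above.
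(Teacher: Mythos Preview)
The paper does not supply its own proof of this lemma; it simply quotes it from \cite[III.H.~Proposition~1.6]{BridsonHaefliger}. Your argument is precisely the classical iterated-bisection proof given there: subdivide $p$ into $2^N$ pieces of length $\le 1$, build the binary tree of geodesic triangles, and use $\delta$-thinness $N$ times to push $v$ from $q$ down to within $N\delta + 1$ of $p$, then solve for $N$.

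Two minor points worth tightening. First, you should take $\gamma_{0,0}$ to be the subsegment of $q$ between $p_-$ and $p_+$, not all of $q$; the statement implicitly assumes $v$ lies on this subsegment (otherwise the conclusion is plainly false). Second, your bookkeeping gives $\ell(p) > 2^{N-1}$ with $N \geq (D-\tfrac12)/\delta$, i.e.\ $\ell(p) > 2^{(D-1/2)/\delta - 1}$, which is not literally $2^{(D-1)/\delta}$; the discrepancy is harmless for every application in the paper (one only needs an exponential lower bound with base $2^{1/\delta}$), and Bridson--Haefliger's own statement has similarly soft constants. So the proof is correct in substance and identical in approach to the cited source.
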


The next lemma shows that in a hyperbolic space if a path $q$ with the same end points as a geodesic $p$ has length linearly bounded by the length of $p$, then at least a fixed proportion of points in $q$ are uniformly close to $p$.

\begin{lemma}\label{lem:linearlength->close2geo}
Suppose that $G$ admits a presentation $\prs{Z}{S}$ whose linear isoperimetric inequality has constant $L$, and where all the relators have length at most $M$.

Then for every $K$ there exist $r$ and $0< \e \leq 1$ such that the following hold:
if $p$ is a geodesic in $\ga(G,Z)$, $p_0$ a subpath of $p$ of length $\ell(p_0)>2r$, and $q$ a path in $\ga(G,Z)$
 with the same endpoints as $p$ such that $\ell(q)\leq K\ell(p_0)$, 
then there are at least $\e \ell(p_0)$ vertices in $p_0$ at distance $\leq r$ from  points in $q$.
\end{lemma}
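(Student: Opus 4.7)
The plan is to combine the $\delta$-hyperbolicity of $\ga(G,Z)$ (a consequence of the linear isoperimetric inequality, with $\delta=\delta(L,M)$) with the exponential divergence of geodesics supplied by Lemma~\ref{lem:avoid}, to show that only a small fraction of vertices of $p_0$ can lie at distance greater than $r$ from $q$ once $r$ is chosen large enough in terms of $K$ and the group constants.

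First, I would subdivide $p_0$ into consecutive blocks of length $2r+1$ with midpoints $v_1,\dots,v_m$, where $m=\lfloor \ell(p_0)/(2r+1)\rfloor$, and call a midpoint $v_j$ \emph{bad} if $\d_Z(v_j,q)>r$. For each bad $v_j$ the entire path $q$ avoids the ball of radius $r$ about $v_j$, and Lemma~\ref{lem:avoid} supplies the single lower bound $\ell(q)\geq 2^{(r-1)/\delta}$. The task is to upgrade this into a lower bound that scales with the number $k$ of bad midpoints. To do so I would use the coarsely $1$-Lipschitz nearest-point projection $\pi\colon q\to p$ available in any $\delta$-hyperbolic space, which assigns to each bad $v_j$ a \emph{crossing time} $\tau_j$ at which $\pi(q(\tau_j))$ lies close to $v_j$ and consequently $\d_Z(q(\tau_j),p)>r-C_1$ for some $C_1=C_1(\delta)$. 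The Lipschitz bound forces the crossing times of well-separated bad midpoints to be well-separated in $q$-time as well.

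For each bad $v_j$ one then isolates a subpath of $q$ around $\tau_j$ and closes it up by short geodesic segments from its endpoints back to $p$, producing a path $\tilde q_j$ whose endpoints lie on $p$. Choosing the subpath endpoints carefully guarantees that the geodesic extensions do not intrude into the ball of radius $r/2$ about $v_j$, so that $\tilde q_j$ avoids this ball and Lemma~\ref{lem:avoid} forces $\ell(\tilde q_j)\geq 2^{(r/2-1)/\delta}$. The $2r+1$-spacing of the midpoints together with the Lipschitz bound on $\pi$ makes the $\tilde q_j$'s essentially disjoint as subpaths of $q$, so summing yields a bound of the form $\ell(q)\geq \tfrac{1}{2}k\cdot 2^{(r/2-1)/\delta}$. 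Combining with $\ell(q)\leq K\ell(p_0)$ and choosing $r$ sufficiently large that this bound forces $k\leq m/2$, one obtains at least $\ell(p_0)/(2(2r+1))$ good midpoints, giving the lemma with $\e=1/(2(2r+1))$.

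The main obstacle is the extraction of disjoint detour subpaths. One must choose the subpath endpoints so that $\tilde q_j$ genuinely avoids a ball of radius proportional to $r$ about $v_j$---which demands that the short extensions back to $p$ be kept far from $v_j$---while simultaneously ensuring that the subpaths associated with distinct bad midpoints are pairwise disjoint. Both conditions rely on the coarse Lipschitz behaviour of projection onto a geodesic in a $\delta$-hyperbolic space together with the $2r+1$-spacing of block midpoints, and this interplay is where most of the technical work would lie.
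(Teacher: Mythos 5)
Your outline takes a genuinely different, purely metric route (nearest-point projection onto $p$ plus Lemma \ref{lem:avoid}), whereas the paper argues inside a van Kampen diagram: it takes a diagram $\Delta$ of minimal area with boundary $pq^{-1}$ and, for each vertex $v$ of $p_0$ with $\d(v,q)>D+2M$, produces a level-set path $s_v$ of the distance-to-$v$ function lying in $\Delta$, with endpoints on $p$ and avoiding $\ball(D)$ around $v$; Lemma \ref{lem:avoid} makes each $s_v$ exponentially long, the $s_v$ for $2(D+2M)$-separated vertices are disjoint, and their total vertex count is charged against the size of $\Delta$, which the \emph{linear isoperimetric inequality} bounds by roughly $2LKM\ell(p_0)$. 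This is why the hypotheses are phrased in terms of $L$ and $M$ and not just a hyperbolicity constant.

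The gap in your argument is the step asserting that each bad midpoint forces length about $2^{(r/2-1)/\delta}$ of $q$ \emph{itself}. What Lemma \ref{lem:avoid} actually gives you is a lower bound on the length of the closed-up path $\tilde q_j$, which is a subpath of $q$ together with two connecting segments from $q(a_j)$ and $q(b_j)$ back to $p$. One can indeed choose these as nearest-point geodesics and check that $\tilde q_j$ avoids a ball of radius about $r/2$ around $v_j$; but the connecting segments have length equal to the distances $\d(q(a_j),p)$ and $\d(q(b_j),p)$, which are not bounded in terms of $r$, $K$ or $\delta$ -- $q$ is an arbitrary path and may be arbitrarily far from $p$ when its projection crosses the block of $v_j$. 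So $\ell(\tilde q_j)\geq 2^{(r/2-1)/\delta}$ does not bound the $q$-portion from below, and the disjointness of the $q$-subpaths then only yields the linear estimate $\ell(q)\gtrsim k\,r$ coming from the coarse Lipschitz property of the projection, which is useless once $K$ is large. Concretely, if $q$ climbs once to a huge height, runs ``across the top'' over many consecutive blocks and descends once, then every one of those midpoints is bad, yet your scheme charges each of them only its short crossing subpath (length about $2r$), while the long climb and descent are shared among all of them; nothing in the outline converts ``staying far above $p$ over a long horizontal stretch'' into length of $q$ growing like $k\cdot 2^{cr/\delta}$. (Such a statement is morally true in hyperbolic spaces, but it is of the same depth as Lemma \ref{lem:linearlength->close2geo} and would need its own proof; the paper's diagrammatic charging is precisely the device that settles it without ever bounding how far $q$ strays from $p$.)
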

\begin{proof}
It is a classical result (see \cite[Lemma 4.9]{DGO} for example) that the  Cayley graph $\ga(G,Z)$ of a group admitting a presentation with bounded length relators and linear isoperimetric function is $\delta$-hyperbolic for some $\delta>0$. 
Let $\delta>0$ be the hyperbolicity constant of $\ga(G,Z)$.

Let $D$ be a number satisfying 
\begin{equation}\label{eq:Davoid}
2^{\frac{D-1}{\delta}} > 2(3LKM)(2D+4M)
\end{equation}

We are going to show that we can take $r= D+2M$.

Consider a van Kampen diagram $\Delta$ of minimal area whose boundary  is $pq^{-1}$, as in Figure 1 (we refer to \cite[I.8A.4]{BridsonHaefliger} for the definition of a van Kampen diagram). 
We claim that if $v$ is a vertex on $p$  such that $\d(v,q)>D+2M$,
then there exists a path $s_v$ in $\Delta$ whose end points are the vertices of $p$ at distance $D+M$ from $v$ and such that for all $u\in s_v$, $D<\d(u,v)<D+2M$.
To prove the claim,  consider the subcomplex  $\Delta_v$ spanned by all $2$-cells of $\Delta$ whose vertices $u$ satisfy that $D<\d_{\Delta}(u,v)<D+2M$.
We linearly extend the graph metric on $\Delta_v$ to the whole complex and construct a continuous map $\phi \colon \Delta \to \R$ assigning to each point its distance to $v$. Since $\d(v,q)>D+2M$ and $\Delta$ is homeomophic to a disk in $\R^2$,  $\phi^{-1}(D+M)$ is connected,  contained in $\Delta_v$, and contains the two vertices $v_-$ and $v_+$ of $p$ at distance $D+M$ of $v$. 
Therefore, there is a path in $\phi^{-1}(D+M)\subseteq \Delta_v$, with end points in $p$ at distance $2(D+M)$ from each other. 
We can deform this path into a   path $s_v$ in the $1$-skeleton of $\Delta_v$. This proves the claim.

\begin{figure}[h!]\label{vanKampenpic}
\labellist
		\small \hair 2pt
		\pinlabel $p$ at 24 -8
		\pinlabel $q$ at 24 75
		\pinlabel $v_i$ at 120 -8
		\pinlabel $v_j$ at 305 -8
		\pinlabel $s_{v_i}$ at 64 35
		\pinlabel $s_{v_j}$ at 245 35
		\pinlabel $>D+2M$ at 130 80
		\pinlabel $>D+2M$ at 360 80
		\endlabellist
\begin{center}
\includegraphics[scale=0.7]{images/vanKampen2}
\end{center}
\caption{Van Kampen diagram}
\end{figure}

Let $n=\lfloor \frac{\ell(p_0)}{2D+4M}\rfloor$ and let $v_1,\dots, v_n$ be vertices in $\ell(p_0)$ such that $\d(v_i,v_j)\geq 2(D+2M)$ for $i\neq j$. See Figure 1.
Note that if $\d(v_i,q)>D+2M$, then by the claim there exists a path $s_{v_i}$  in $\Delta$ with end points in $p$ and avoiding a ball of radius $D$ centered at $v_i$. Moreover, since $\d(v_i,v_j)\geq 2D+4M$ if $i\neq j$ and $\d(v_k,s_k)\leq D+2M$ for all $k$, then $s_{v_i}\cap s_{v_j}=\emptyset.$

Now suppose that for some $0\leq \nu \leq 1$ there are  $\lfloor \nu n\rfloor$ elements of $\{v_1,\dots,v_n\}$ such that $\d(v_i,q)>D+2M$.
By Lemma \ref{lem:avoid} we get that
there are at least $ \lfloor \nu n \rfloor 2^{\frac{D-1}{\delta}}$ different vertices in $\Delta$. Since $\ell(pq^{-1})\leq 2\ell(q)\leq 2K\ell(p_0)$ we have 
$$\left(\nu \left(\dfrac{\ell(p_0)}{2D+4M}-1\right)-1\right)2^{\frac{D-1}{\delta}}\leq \lfloor \nu n\rfloor 2^{\frac{D-1}{\delta}} \leq 2L K\ell(p_0) M,$$
where the last inequality is an upper bound for the total number of vertices in the van Kampen diagram $\Delta$.

Therefore we get that
$$\nu\leq \dfrac{ 2D+4M}{2^{\frac{D-1}{\delta}}} \cdot \dfrac{(2LK\ell(p_0)M+1)}{\ell(p_0)- (2D+4M)}\leq\dfrac{ (2D+4M)(3LKM)}{2^{\frac{D-1}{\delta}}} \cdot \dfrac{\ell(p_0)}{\ell(p_0)- (2D+4M)} .$$
Hence if $\ell(p_0)>2(2D+4M)$, we have that $\ell(p_0)/(\ell(p_0)-(2D+4M))<2$ and by \eqref{eq:Davoid} we get that
$$0\leq  \nu \leq  \e_0 \coloneq \dfrac{ (2D+4M)(3LKM)}{2^{\frac{D-1}{\delta}}} 2< 1.$$
This completes the proof, since we have shown that $\nu \leq \e_0<1$ and hence there are at least $(1-\e_0)n$ elements of $\{v_1,\dots,v_n\}$ such that $\d(v_i,q)\leq D+2M$. Finally let $\e\coloneq 1-\e_0$.
\end{proof}

\begin{proof}[Proof of Theorem \ref{thm:poison}]
Let $Z$  be a generating of $G$ such that the Cayley graph $\ga(G,Z)$ is hyperbolic, the action of $G$ on $\ga(G,Z)$ is acylindrical and $H$ is quasi-isometrically embedded in  $(G,Z)$. Since the symmetric difference between $Z$ and $Z\cup X$ is finite, we have that $(G,Z)$ and $(G,Z\cup X)$ are quasi-isometric, and therefore, we can assume without loss of generality that $X\subseteq Z$.

Since $\ga(G,Z)$ is hyperbolic, $G$ has a presentation $\prs{Z}{S}$ with bounded length relators and linear isoperimetric inequality.

Let $Y$ be any finite generating set of $H$. We identify each element of $Y$ with a geodesic word over $X$ representing the same element, and for $V\in Y^*$ we write $\wh{V}$ to denote the word $V$ viewed as a word over $X$ through this fixed identification. 
As $H$ is quasi-isometrically embedded in $G$, there exist $\lambda\geq 1$ and $c\geq 0$ such that for any  $V\in \geocpl(H,Y)$,
$\wh{V}$ labels a cyclic $(\lambda,c)$-quasi-geodesic in $\ga(G,Z)$. Since $\ga(G,Z)$ is hyperbolic, it satisfies the BCD property.
Let $D_0=D_0(\lambda,c)$ be the BCD constant for cyclic $(\lambda, c)$-quasi-geodesics.

 Let $U\in \geocl(G,X)$ and $V\in \geocpl(H,Y)$ be conjugate, and suppose that $C$ is a minimal $Z$-length conjugator, up 
to cyclic permutations of $U$ and $\wh{V}$; we can assume without loss of generality that $CU =_G \wh{V}C$. Note that by definition $\ell_X(U)\leq \ell_X(\wh{V})$.

The goal of the proof is to use the BCD property in the hyperbolic $\ga(G,Z)$, together with Lemmas \ref{lem:linearlength->close2geo} and \ref{lem:Xseparation}, to find a short conjugator of $U$ and $V$ not only with respect to $Z$, but with respect to $X$.
Take $W\in \geol(G,Z)$ representing the same element as $U$, 
and consider the 4-gon $prq^{-1}s^{-1}$ in $\ga(G,Z)$ where $\Lab(p)\equiv \wh{V}$, 
$\Lab(r)\equiv \Lab(s)\equiv C$, $\Lab(q)\equiv W$. By hyperbolicity and stability of quasi-geodesics, there is a constant $K_1=K_1(\delta, \lambda,c)$ so that each side of the 4-gon is at distance $K_1$ of the other three. Let $q_1$ be the path labelled by $U$ with same end points as $q$.
See Figure \ref{fig:diagramH4}.

\begin{figure}[h!]
\vspace{0.2cm}
		\labellist
		\small \hair 2pt
		\pinlabel $1$ at 24 0
		\pinlabel $p,\,\text{Lab}(p)\equiv \widehat{V},\text{ a }(\lambda,c)\text{-q.g. in }(G,Z)$ at 250 -4 
		\pinlabel $q_1,\,\text{Lab}(q_1)\equiv U,\text{ a path satisfying }\ell(U)\leq \ell(\widehat{V})$ at 250 200 
		\pinlabel ${u}$ at 55 87
		\pinlabel ${v}$ at 427 87
		\pinlabel ${q},\text{Lab}(q)\equiv W,\text{ geodesic path in }(G,Z)$ at 230 96
		\pinlabel ${q_0}\text{ subpath of $q$ from $u$ to $v$}$ at 230 76
		\pinlabel $s$ at 40 75
		\pinlabel $r$ at 442 75
		\pinlabel $\geq \frac{\ell(\wh{W})}{\lambda}-c-2D_0$ at 265 50
		\pinlabel $\leq R_1$ at 116 102
		\pinlabel $\leq R_1$ at 341 102
		\pinlabel $\leq R_1$ at 396 102
		\pinlabel $\leq K_2$ at 116 32
		\pinlabel $\leq K_2$ at 341 32
		\pinlabel $\leq K_2$ at 396 32
		\pinlabel $\leq K_2$ at 224 32
		\pinlabel $x_1$ at 96 4
		\pinlabel $v_1$ at 96 78
		\pinlabel $y_1$ at 100 120
		\pinlabel $x_i$ at 321 8
		\pinlabel $v_i$ at 321 78
		\pinlabel $y_i$ at 327 126
		\pinlabel $x_n$ at 376 -2
		\pinlabel $v_n$ at 374 78
		\pinlabel $y_n$ at 383 126
		\endlabellist
\includegraphics[scale=0.95]{images/drawing2}
\caption{Diagram of the paths $U$ and $\wh{V}$ in $\ga(G,Z)$}
\label{fig:diagramH4}
\vspace{0.2cm}
\end{figure}

The following argument shows how to find a `long enough' subpath $q_0$ of $q$, with respect to $q_1$, so that we can apply Lemma \ref{lem:linearlength->close2geo}.
By the BCD property, any word $T\in Z^*$ conjugate to $\wh{V}$ has length
$$\ell(T)\geq (\ell(\wh{V})/\lambda -c)-2D_0.$$
Thus, for all $t$,
$$\d_Z(r(t), s(t))\geq (\ell(\wh{V})/\lambda -c)-2D_0.$$
Suppose that 
$$(\ell(\wh{V})/\lambda -c)-2D_0> 2K_1$$
and that a vertex $r(t_1)$ of $r$ is at $Z$-distance  at most $K_1$ of a vertex $s(t_2)$ of $s$.
Then $|t_1-t_2|\leq K_1$, since otherwise we could shorten $C$; on the other side, 
this contradicts  $\d_Z(r(t_1),s(t_1))\geq  (\ell(\wh{V})/\lambda -c)-2D_0$. This shows that $d_Z(r(t_1), s(t_2)) \geq K_1$ for all $t_1, t_2$.
Also, a similar argument shows that any vertex $r(t)$ with $t>K_1$  cannot be at distance $\leq K_1$ to a vertex of $p$, since then we can change $\wh{V}$ by a cyclic permutation and reduce the length of the conjugator. This would contradict the minimality of $C$.

Thus $r(K_1+1)$ and $s(K_1+1)$ are at distance smaller than $K_1$ from some vertices $u$ and $v$ in $q$, respectively. Let $q_0$ be the subpath of $q$ from $u$ to $v$.
As $\d_Z(r(K_1+1),s(K_1+1)) \geq( \ell(\wh{V})/\lambda -c)-2D_0,$ 
we get $\d_Z(u,v)\geq ( \ell(\wh{V})/\lambda -c)-2D_0-2K_1$. 

Then $\ell(q_0) =\d_Z(u,v) \geq  \ell(\wh{V})/\lambda -(c+2D_0+2K_1)$ and since $\ell(\wh{V})\geq \ell(U)$, we get that
$\ell(q_0)\geq \ell(U)/\lambda -(c+2D_0+2K_1)$. Then
$\ell(U)$ is linearly bounded by $\ell(q_0)$;  one can for example take $\lambda'=\lambda +c+2D_0+2K_1$ and hence $\lambda'\ell(q_0)\geq \ell(U)$.
We now can apply  Lemma \ref{lem:linearlength->close2geo}, so 
there are $R_1$ and $0<\e\leq 1$ depending on $\lambda'$ such that if $\ell(q_0)>2R_1$ and $\ell(q_1)\leq \lambda' \ell(q_0)$, then at least $\e \ell(q_0)$ vertices of $q_0$ 
are at $Z$-distance at most $R_1$ from a vertex of $q_1$.

By hyperbolicity, there is some constant $K_2=K_2(K_1,\lambda,c)$ such that  
$q_0$ is in the $K_2$-neighbourhood of $p$.
Let $R$ be the constant of Lemma \ref{lem:Xseparation} with $\alpha=K_2+R_1$.
Since $q_0$ is geodesic, it follows that  there are $n=\lfloor \frac{\e_1 \ell(q_0)}{R}\rfloor$ different vertices $v_1,\dots,v_n$ of $q_0$, such that for each $v_i$ there are vertices $x_i\in p$ and $y_i\in q_1$ with  
$\d_Z(x_i,v_i)\leq K_2$, $\d_Z(v_i,y_i)\leq R_1$ and $\d_Z(x_i,x_j)>R$ for $i\neq j$. After relabelling the vertices, if necessary, we can assume that for $i<j$, $y_i$ appears before $y_j$ when traveling in $q_1$ from $(q_1)_-$ to $(q_1)_+$.
Recall that $U$ labels a geodesic in $X$ with $\ell_X(U) \leq \ell_X (\wh{V})$;
therefore, there is some $i$ such that
$\d_X(y_i,y_{i+1})\leq \ell(\wh{V})/n$. 
Since $n$ depends linearly on $\ell(\wh{V})$ there is some $D$ independent of $\wh{V}$
 such that $\ell(\wh{V})/n\leq D$. 
 We are going to use Lemma \ref{lem:Xseparation} for the four-point set $\{x_i,y_i, y_{i+1}, x_{i+1}\}$. So far we have obtained $$\d_X(y_i,y_{i+1})\leq D, \d_Z(x_i,y_i)\leq \alpha,\d_Z(x_{i+1},y_{i+1})\leq \alpha, \textrm{and} \ \d_Z(x_i,x_{i+1})>R.$$
Let $B$ be the constant of Lemma \ref{lem:Xseparation} corresponding to $D$. Then $\d_X(x_i,y_i)
 \leq B$. 
 
We have found cyclic permutations of $U$ and $\wh{V}$
 that are conjugate by an element of $X$-length less than $ B$.
 Let $M=\max\{|y|_X \mid y \in Y\}$. Then we can find a cyclic permutation
 of $U$ and $V$ conjugated by an element of $X$-length less than $B+M$. This finishes the proof.
\end{proof}

\subsection{Proof of Theorem \ref{thm:AHregular}}

It was proved in \cite[Theorem 1.4]{OsinAH} that a group is acylindrically hyperbolic 
if and only if it has a non-degenerate hyperbolically embedded subgroup in the sense of Dahmani, Guirardel and Osin \cite{DGO}:

\begin{definition}
Let $G$ be a group, $\Lambda$ a set,
$\{H _\lambda \}_{\lambda \in \Lambda}$ a collection of  subgroups of $G$ and
$Z$ a  subset of G (not necessarily finite). 

The set $Z$ is a {\it generating set relative to} $\{H _\lambda \}_{\lambda \in \Lambda}$ 
if the natural homomorphism from
$$ F = (*_{\lambda \in \Lambda}H_\lambda )* F (Z)$$
to $G$ is surjective, where $ F (Z)$ is the free group with  basis $Z.$ 
Assume that $Z$ is a generating set relative to $\{H_\lambda \}_{\lambda \in \Lambda}$ and let $R$ 
be a subset of $F$ whose normal closure is the kernel of the natural map $F\to G$.
Let $$\cH = \bigsqcup_{\lambda \in \Lambda}  H_\lambda,$$ let $S_\lambda$ be all the words in $H_\lambda$ representing the identity, and let $S=\cup S_\lambda$.
We then say that $G$ has {\it presentation relative to $Z$}
$$\prs{Z, \cH}{S, R}.$$

The presentation is {\it strongly bounded} if all words in $R$ have bounded length,
and for each $\lambda\in \Lambda$, there are only finitely many elements of $H_\lambda$
appearing  as syllables of words in $R$.

Given a word $W$ over the alphabet $Z\cup \cH$ that represents $1$ in $G$
there exists an expression for $W$ in $F$ of the form
\begin{equation}
\label{eq:defarea}
 W=_F \prod_{i=1}^n f_ir_i^{\e_1} f_i^{-1},
\end{equation}
where $r_i\in R$, $f_i\in F$ and $\e_i=\pm 1$ for $i=1,\dots, n$.
The smallest possible number $n$ in an expression of type \eqref{eq:defarea} is
called the {\it relative area of $W$} and is denoted by $\mathrm{Area}_{rel}(W)$.

A family of subgroups $\{H_\lambda\}_{\lambda \in \Lambda}$ of $G$ is 
{\it hyperbolically embedded in $(G,Z)$}, denoted $\{H_\lambda\}_{\lambda \in \Lambda}\h (G,Z)$, if 
there exists a strongly bounded presentation relative to $\{H_\lambda\}_{\lambda \in \Lambda}$ and $Z$,
and there is a constant $L\geq 0$, called an {\it isoperimetric constant},
such that 
$$\mathrm{Area}_{rel}(W)\leq L \ell_{Z\sqcup \cH}(W)$$
for all words $W$ over $Z\sqcup \cH$ that are the identity in $G$.
In particular, $\ga(G, Z\sqcup \cH)$ is Gromov hyperbolic (see \cite[Lemma  4.9]{DGO}).

We say that $\{H_\lambda\}_{\lambda \in \Lambda}$ is {\it hyperbolically embedded} in $G$,
denoted $\{H_\lambda\}_{\lambda \in \Lambda}\h G$, 
if there is some $Z$ such that $\{H_\lambda\}_{\lambda \in \Lambda}\h(G,Z)$.
We note that $G\h G$ (take $Z$ to be empty) and for any collection of finite subgroups 
$\{H_\lambda\}_{\lambda \in \Lambda}$,  $\{H_\lambda\}_{\lambda \in \Lambda}\h G$ (take $Z$
to be $G$). These are the {\it degenerate} cases.
\end{definition}

Hyperbolically embedded subgroups satisfy the properties in Lemma \ref{lem:AHfacts} (see \cite[Lemmas 3.1, 3.2]{AMS}). Furthermore, it was shown in \cite[Theorem 4.42]{DGO} or \cite[Theorem 3.9]{AMS} (which has a simpler proof when $\ga(G,Z)$ is hyperbolic) that the converse also holds, i.e. if (1) and (2) below hold, then $H\h (G,Z)$.

\begin{lemma}\label{lem:AHfacts}
Suppose that $H\h (G,Z)$. Then
\begin{enumerate}
\item[{\rm (1)}] $H$ is finitely generated and quasi-isometrically embedded in $(G,Z)$;
\item[{\rm (2)}] for every $r>0$ there exists $R$ such that for $g\in G$, if $\diam(H \cap \nei{gH}{r}{Z})>R$ then $g\in H$.
\end{enumerate}
Here $\nei{S}{r}{Z}$ denotes the $r$-neighbourhood of $S$ with respect to the metric $\d_Z$.
\end{lemma}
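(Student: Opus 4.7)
Both parts stem from the strongly bounded relative presentation $\prs{Z,\cH}{S,R}$ with linear isoperimetric constant $L$; let $M$ bound the lengths of relators in $R$, and let $H_0\subseteq H$ denote the finite set of elements of $H$ appearing as syllables of words in $R$ (finite by strong boundedness). The plan for both parts is to translate a combinatorial area bound into a length bound via a uniform van Kampen diagram reduction: the $\cH$-cells in $S$ may be used freely, so every maximal $\cH$-subgraph in the diagram contracts to a single edge whose label is a product of the $H_0$-syllables carried by the adjacent $R$-cells.

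For (1), given $h\in H$, I take a geodesic word $W\in Z^*$ representing $h$ in $(G,\d_Z)$; then $h^{-1}W$ has length $|h|_Z+1$ over $Z\sqcup\cH$ and equals $1$ in $G$, so it bounds a van Kampen diagram $\Delta$ with at most $L(|h|_Z+1)$ $R$-cells, each of boundary length $\le M$ and with $\cH$-syllables in $H_0$. After contracting all $\cH$-subgraphs of $\Delta$ as above, the boundary $\cH$-letter $h^{-1}$ gets identified, through its surviving $\cH$-component, with a product of at most $ML(|h|_Z+1)$ elements of $H_0$. This gives $h\in\gen{H_0}$ and $|h|_{H_0}\le C_1(|h|_Z+1)$ for a constant $C_1=C_1(L,M)$. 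Setting $Y:=H_0$ proves finite generation of $H$ and the nontrivial direction of the quasi-isometric embedding $(H,\d_Y)\hookrightarrow(G,\d_Z)$; the reverse estimate $|h|_Z\le(\max_{y\in Y}|y|_Z)\,|h|_Y$ is immediate.

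For (2), I fix $r>0$ and suppose $h_1,h_2\in H\cap\nei{gH}{r}{Z}$ with $|h_1^{-1}h_2|_Z>R$, for $R$ to be chosen. Pick $k_i\in H$ so that $u_i:=h_i^{-1}gk_i$ satisfies $|u_i|_Z\le r$, and set $a:=h_1^{-1}h_2\in H$, $b:=k_1^{-1}k_2\in H$. A direct computation yields $u_1^{-1}au_2b^{-1}=1$ in $G$, a word of length $\le 2r+2$ over $Z\sqcup\cH$ in which $a$ and $b$ appear as single $\cH$-letters. The isoperimetric inequality produces a van Kampen diagram $\Delta$ with at most $L(2r+2)$ $R$-cells, a bound independent of $R$. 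After contracting $\cH$-subgraphs, two cases arise. If the boundary edges labelled $a$ and $b$ lie in distinct $\cH$-components, the surviving label of the $a$-component is a word of length at most $ML(2r+2)$ in $H_0$; by (1) this forces $|a|_Z\le R_0:=(\max_{h_0\in H_0}|h_0|_Z)\cdot ML(2r+2)$, so choosing $R>R_0$ excludes this case. If the edges labelled $a$ and $b$ lie in the same $\cH$-component, then the endpoints of the $a$-edge (both in the coset $u_1^{-1}H$) and of the $b$-edge (both in the coset $H$, since they are $b$ and $1$) must project to the same coset, forcing $u_1\in H$ and hence $g=h_1 u_1 k_1^{-1}\in H$.

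The main obstacle is the coset-projection step in the second case of (2): one needs that $\cH$-edges sharing a single $\cH$-component of a reduced relative van Kampen diagram have endpoints in a common left coset of $H$, which is built into the construction of such diagrams via the developing map but requires careful bookkeeping when tracking successive $\cH$-cell contractions. Part~(1) supplies the essential bridge between $H_0$-word length and $\d_Z$-distance used in the exclusion step; since its proof uses only the isoperimetric inequality, there is no circularity when it is invoked inside the proof of~(2).
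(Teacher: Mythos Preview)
The paper does not give a proof of this lemma; the sentence immediately preceding it simply records that the two assertions are \cite[Lemmas~3.1, 3.2]{AMS}. Your argument---bounding the relative area of a short word over $Z\sqcup\cH$, then analysing the $\cH$-components of a van Kampen diagram and using that the $\cH$-syllables of $R$-cells lie in the finite set $H_0$---is exactly the approach of those references (and of the analogous statements in \cite{DGO}), so you have essentially reconstructed the cited proof rather than found an alternative one. The coset-projection step you flag as the ``main obstacle'' is indeed the standard point: every edge of an $\cH$-component is labelled by an element of $H$, so its two endpoints lie in the same left $H$-coset, and this propagates across the component.

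One technical wrinkle deserves attention. You take $W\in Z^*$ in part~(1) and represent the $u_i$ by words over $Z$ in part~(2); this presupposes that $Z$ alone generates $G$, whereas the definition of $H\h (G,Z)$ only asks that $Z$ be a \emph{relative} generating set, i.e.\ that $Z\sqcup\cH$ generate $G$. In the paper's only application (the proof of Theorem~\ref{thm:AHregular}, where $Z=Z_1\sqcup H_1\sqcup H_2\sqcup H_3$) this extra hypothesis happens to hold, but for the lemma as stated you should either record it explicitly or allow $W$ and the $u_i$ to be words over $Z\sqcup\cH$ and carry the additional boundary $\cH$-letters through the component count; the bounds stay linear and the dichotomy in~(2) is unaffected. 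A smaller remark: your appeal to part~(1) in the first case of~(2) is not actually needed---once $a$ is expressed as a product of at most $ML(2r+2)$ elements of $H_0$, the bound $|a|_Z\le R_0$ is the triangle inequality, which is the trivial direction of the quasi-isometric embedding.
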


%
%
\begin{theorem}\cite[Theorem 5.4]{OsinAH}\label{thm:acyl}
Let $G$ be a group, $\{H_\lambda\}_{\lambda \in \Lambda}$ a finite collection of subgroups of $G$, $Z$ a subset
of $G$ such that $\{H_\lambda\}_{\lambda \in \Lambda}\h (G,Z)$. Then there exists $Y \subseteq G$ such that $Z \subseteq Y$ 
and the
following hold.
\begin{enumerate}
\item[{\rm (a)}] $\{H_\lambda\}_{\lambda \in \Lambda}\h (G,Y)$.
\item[{\rm (b)}] The action of $G$ on $\ga(G, Y\sqcup (\sqcup_{\lambda \in \Lambda}H_\lambda))$ is acylindrical.
\end{enumerate}
\end{theorem}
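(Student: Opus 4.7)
The plan is to build $Y$ by adjoining to $Z$ a carefully chosen set $A \subseteq G$ of elements that neutralize the witnesses to non-acylindricity of the action on $\ga(G, Z \sqcup \cH)$, and then to check directly that (a) and (b) hold. Throughout I will write $\ga_Z := \ga(G, Z\sqcup\cH)$ and $\ga_Y := \ga(G, Y\sqcup\cH)$, and note that $\ga_Z$ is $\delta$-hyperbolic by \cite[Lemma 4.9]{DGO} because $\{H_\lambda\}\h (G,Z)$ provides a strongly bounded linear isoperimetric relative presentation.

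First I would analyse what causes acylindricity to fail in $\ga_Z$. A failure at parameter $\e_0$ produces, for every $R, N$, points $x, y \in G$ with $\d_{Z\sqcup\cH}(x,y) \geq R$ and more than $N$ elements $g$ satisfying $\d(x, gx)\leq \e_0$ and $\d(y, gy) \leq \e_0$. By $\delta$-hyperbolicity, each such $g$ maps a geodesic $[x, y]$ in $\ga_Z$ to a path that $K$-fellow travels with it for a uniform $K = K(\e_0,\delta)$, so any two witnesses $g_1, g_2$ give an element $g_1^{-1}g_2$ acting by bounded displacement on a long central segment of $[x,y]$. The bounded coset intersection property of Lemma \ref{lem:AHfacts}(2) then dichotomises these witnesses: either $g_1^{-1}g_2$ has uniformly bounded $(Z\sqcup\cH)$-length, or the long segment lies in the $r$-neighbourhood of a single $H_\lambda$-coset, in which case those witnesses are already absorbed into $H_\lambda$ and contribute only the acylindrically harmless part.

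The construction of $A$ is then combinatorial. For each integer scale $\e$ and each witness configuration $(x,y,T)$ at parameter $\e$ with $R$ large, add to $A$ the finitely many bounded-$(Z\sqcup\cH)$-length ``shortcut elements'' supplied by the previous paragraph, i.e.\ elements which, once promoted to single generators in $Y$, strictly reduce $\d_Y(x,y)$ below any target $R$. Using the finiteness of $\Lambda$ and a diagonal bookkeeping across all scales, one can arrange that every $a\in A$ has bounded $(Z\sqcup\cH)$-length and that $|A\cap fH_\lambda|<\infty$ for every $\lambda\in\Lambda$ and every coset $fH_\lambda$. Set $Y := Z \cup A$.

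Finally I would verify (a) and (b). For (a), I would invoke the converse to Lemma \ref{lem:AHfacts} proved in \cite[Theorem 4.42]{DGO}: each $H_\lambda$ remains quasi-isometrically embedded in $(G, Y)$ because $A$ consists of bounded-$(Z\sqcup\cH)$-length elements, and the bounded coset intersection property $\diam(H_\lambda \cap \nei{fH_\mu}{r}{Y})<\infty$ survives because the diagonal construction of $A$ rules out new coincidences of distant cosets. For (b), suppose $\ga_Y$ admits witnesses $(x,y,T)$ to $\e$-non-acylindricity with $R := \d_Y(x, y)$ arbitrarily large; expanding each $Y$-letter into a bounded-length $(Z\sqcup\cH)$-word produces a witness configuration in $\ga_Z$ at a comparable scale, which by the construction of $A$ has been neutralised, forcing either $R$ or $|T|$ to be bounded in terms of $\e$. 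The principal obstacle is this bookkeeping: one must absorb a potentially uncountable family of witness configurations through a single enlargement that simultaneously keeps each $A\cap fH_\lambda$ finite and keeps each $H_\lambda$ quasi-isometrically embedded, and I expect this diagonalisation over scales, together with the extraction of shortcut elements from $g_1^{-1}g_2$, to be the most delicate part of the argument.
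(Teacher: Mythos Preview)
The paper does not prove this theorem: it is quoted verbatim as \cite[Theorem 5.4]{OsinAH} and used as a black box in the proof of Theorem \ref{thm:AHregular}. There is therefore no ``paper's own proof'' to compare your proposal against.

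As an independent sketch of Osin's argument, your outline has the right shape---enlarge $Z$ by a set $A$ of bounded-$(Z\sqcup\cH)$-length elements so as to collapse the witnesses to non-acylindricity, while preserving the hyperbolic embedding---but the construction of $A$ you describe is not yet a proof. The phrase ``add to $A$ the finitely many bounded-$(Z\sqcup\cH)$-length shortcut elements \dots\ which, once promoted to single generators in $Y$, strictly reduce $\d_Y(x,y)$ below any target $R$'' does not specify which elements these are or why adding them actually shortens the relevant distances; a single new generator $a$ only shortens paths that happen to pass near a pair $(v,va)$, and there is no reason a geodesic $[x,y]$ in $\ga_Z$ should do so. In Osin's actual proof the set $Y$ is built from a concrete combinatorial object (separating cosets along geodesics in $\ga_Z$), and the acylindricity of $\ga_Y$ is obtained not by ``neutralising witnesses'' one scale at a time but by proving a uniform structural statement about geodesics in $\ga_Y$ relative to $H_\lambda$-cosets. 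Your ``diagonal bookkeeping across all scales'' is exactly the place where a genuine construction is needed, and as written it is a promissory note rather than an argument.
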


\begin{proof}[Proof of Theorem \ref{thm:AHregular}]
Let $X$ be the fixed generating set of $G$. Since $G$ is acylindrically hyperbolic, it contains non-degenerate hyperbolically embedded subgroups.

By \cite[Theorem 6.8]{DGO}, there is a family of virtually cyclic subgroups $\{H_1,H_2,H_3\}$
and $Z_0\subseteq G$ such that $\{H_1,H_2, H_3\}\h(G,Z_0)$.
By Theorem \ref{thm:acyl}, there is $Z_1$ such that $X\subseteq Z_0 \subseteq  Z_1$,
 $\{H_1,H_2, H_3\}\h(G,Z_1)$ and the action of $G$ on  $\ga(G,Z_1\sqcup H_1 \sqcup H_2 \sqcup H_3)$
is acylindrical.

In the proof of \cite[Theorem 6.14]{DGO} it is shown that
starting from $\{H_1,H_2, H_3\}\h(G,Z)$ one can construct  $H$, a non-elementary virtually free subgroup  of $G$,
such that $H\h (G, Z_1\sqcup H_1\sqcup H_2 \sqcup H_3)$.  Set $Z= Z_1\sqcup H_1\sqcup H_2 \sqcup H_3$.

We need to establish that all the hypotheses of Theorem \ref{thm:crit} hold.
By Theorem \ref{thm:poison}, $(G,X)$ has BCD relative to $H$, and hence it is sufficient to show that $H$ is almost malnormal and Morse. However, these are known properties for hyperbolically embedded subgroups. 

Indeed,  if $h\in H$ has infinite order and $g^{-1}hg\in H$ for some $g\in G$,
then $H\cap \nei{gH}{|g|_Z}{Z}$ has infinite diameter and by
Lemma \ref{lem:AHfacts}, $g\in H$. Therefore $H$ is almost malnormal. By \cite[Theorem 2]{SistoZ}, hyperbolically embedded subgroups are Morse.
\end{proof}

\section*{Acknowledgments}

The authors would like to thank Michael Coons, Murray Elder and Denis Osin for helpful discussions, and thank Derek Holt for his valuable comments on Section 5.

The first author was supported by the MCI 
(Spain) through project MTM2014-54896-P. 
Both authors were supported by the Swiss National Science 
Foundation grant Professorship FN PP00P2-144681/1.  


\bigskip

\textsc{Yago Antol\'{i}n,
Vanderbilt University,
Department of Mathematics,
1326 Stevenson Center,
Nashville, TN 37240, USA.}

\emph{E-mail address}{:\;\;}\texttt{yago.anpi@gmail.com}

\bigskip

\textsc{Laura Ciobanu,
Mathematics Department,
University of Neuch\^atel,
Rue Emile-Argand 11,
CH-2000 Neuch\^atel, Switzerland
}

\emph{E-mail address}{:\;\;}\texttt{laura.ciobanu@unine.ch}

\end{document}